\documentclass[11pt]{amsart}

\usepackage{amssymb, amsthm, amsmath, gensymb}
\usepackage{graphicx, comment}
\usepackage[small]{caption}
\usepackage{subcaption}
\usepackage{epsfig}
\usepackage{tikz, pgfplots, float}

\newcommand{\later}[1]{}
\newcommand{\old}[1]{}

\usepackage{amsfonts}

\usepackage[utf8]{inputenc}
\usepackage{fullpage}
\usepackage{framed}
\usepackage{multirow}
\usepackage{enumerate}
\usepackage{url}
\usepackage[breaklinks]{hyperref}
\usepackage{cleveref}
\hypersetup{
	colorlinks = true, 
	urlcolor = cyan, 
	linkcolor = teal, 
	citecolor = cyan 
}

\newcommand{\LA}{\mathop{}\!\mathrm{La^*}}

\newcommand{\LAw}{\mathop{}\!\mathrm{La}}

\newcommand{\arm}{\mathop{}\!\mathrm{ar^*_m}}
\newcommand{\arw}{\mathop{}\!\mathrm{ar_m}}
\newtheorem{thm}{Theorem}[section]
\newtheorem{definition}[thm]{Definition}
\newtheorem{lem}[thm]{Lemma}

\newtheorem{cor}[thm]{Corollary}

\newtheorem{prop}[thm]{Proposition}
\theoremstyle{definition}
\newtheorem{rem}[thm]{Remark}

\newtheorem{obs}[thm]{Observation}

\newtheorem{conjecture}[thm]{Conjecture}

\newcommand{\cC}{{\mathcal C}}
\newcommand{\cQ}{{\mathcal Q}}

\newcommand{\cF}{{\mathcal F}}

\newcommand{\cG}{{\mathcal G}}
\newcommand{\cD}{\boldsymbol{\mathcal D}}
\newcommand{\cM}{{\mathcal M}}
\newcommand{\cH}{{\mathcal H}}

\newcommand{\cA}{{\mathcal A}}

\newcommand{\cT}{{\mathcal T}}

\newcommand{\cU}{{\mathcal U}}

\title{Maximal anti-Ramsey problems for posets}
\author{Binlong Li}
\address{School of Mathematics and Statistics, Xi’an–Budapest Joint Research Center for Combinatorics, Northwestern Polytechnical University, Xi’an 710129, China.} 
\email{binlongli@nwpu.edu.cn}

\author{Bal\'azs Patk\'os}
\address{HUN-REN Alfr\'ed R\'enyi Institute of Mathematics, } 
\email{patkos@renyi.hu}

\author{Changxin Wang}
\address{School of Mathematics and Statistics, Xi’an–Budapest Joint Research Center for Combinatorics, Northwestern Polytechnical University, Xi’an 710129, China.} 
\email{simonang@mail.nwpu.edu.cn}

\date{}

\begin{document}

\begin{abstract}
    We study the forbidden poset analog of the maximal anti-Ramsey problem introduced for graphs by Burr, Erd\H os, Graham, and S\'os. For integers $m\le 2^n$ and poset $P=(P,\preceq)$, we introduce $\arw(n,m,P)$ (and $\arm(n,m,P)$) to denote the minimum integer $k$ such that there exists a family $\mathcal{F}\subseteq 2^{[n]}$ with $|\cF|=m$ and a coloring $\cF\rightarrow [k]$ with all weak (strong) copies of $P$ being rainbow. As long as there exist $P$-free families of size $m$, these parameters equal 1. It is known that the largest size $\LAw(n,P)$ ($\LA(n,P)$) of weak (strong) $P$-free families has order of magnitude $\Theta(\binom{n}{\lfloor n/2\rfloor})$ unless $P$ is the antichain $A_k$ on $k$ elements. In this paper we study $\arw(n,m,P)$ and $\arm(n,m,P)$ in two regimes of $m$. We determine the asymptotics of these parameters for all posets $P$ when $m=2^n$. We also consider the case $m=\Theta(\binom{n}{\lfloor n/2\rfloor})$. It is shown that for any connected poset $P$ and integer $k$, there exist integers $m_{P,k}$ and $m^*_{P,k}$ such that to color the middle $k$ layers of the Boolean lattice with all weak or strong copies of $P$ being rainbow, one needs $\Theta(n^{m_{P,k}})$ or $\Theta(n^{m^*_{P,k}})$ colors. For tree posets $T$, one has $m_{T,k}=m^*_{T,k}$. We conjecture that for any tree poset $T$, and positive real $\varepsilon$, $\arw(n,m,T),\arm(n,m,T)=\Omega(n^{m_{T,k}})$ holds provided $m\ge (k-1+\varepsilon)\binom{n}{\lfloor n/2\rfloor}$. We prove our conjecture on $\arw(n,m,T)$ for an infinite class of tree posets.
\end{abstract}

\maketitle

\section{Introduction}

We use standard notation: $[n]$ denotes the set of the first $n$ positive integers. For a set $S$, its power set is denoted by $2^S$ and we write $\binom{S}{k}=\{T\subseteq S: |T|=k\}$.

\medskip

In extremal combinatorics, Tur\'an type problems ask for the maximum size that a combinatorial structure (graph, hypergraph, etc) can have without containing a forbidden substructure. In extremal finite set theory, the definition of a forbidden containment pattern was introduced by Katona and Tarj\'an \cite{KT}: a family $\cG$ of sets is a \textit{weak copy} of the poset $(P,\preceq)$, if there exists a bijection $\iota:P\rightarrow \cG$ such that $p\preceq q$ implies $\iota(p) \subseteq \iota(q)$. $\cG$ is said to be a \textit{strong copy} of $(P,\preceq)$ if the bijection $\iota$ satisfies $p\preceq q$ if and only if $\iota(p)\subseteq \iota(q)$. A family $\cF$ is weak (strong) $P$-free if it does not contain any weak (strong) copies of $P$. The extremal problem, the so-called forbidden subposet problem, of finding $\LAw(n,P)$ ($\LA(n,P)$) the maximum size of a weak (strong) $P$-free family $\cF\subseteq 2^{[n]}$ has attracted lots of attention: for surveys see \cite{AMP,GLsurv} and Chapter 7 of \cite{GP}.

A combinatorial object is \textit{rainbow} with respect to a coloring $c$ if all its elements (edges of a graph, sets of a set family, etc) receive distinct colors. Many extremal graph theoretic questions have been studied that ask for conditions that imply the existence of a rainbow copy of a graph $H$. Recently, several papers \cite{KM,KMOWY,P1,P2} have considered forbidden subposet analogs of these graph theory rainbow problems. The present manuscript addresses the forbidden subposet analog of the following version introduced by Burr, Erd\H os, Graham, and S\'os \cite{BEGS} (for more recent development, see \cite{BCM,LNX,SS,Y}): what is the minimum number $\chi_s(n,e,H)$ of colors that is needed to color some $n$-vertex graph $G$ with $e$ edges such that all copies of $H$ are rainbow. Here are the main definitions for the present paper.

\begin{definition}
For any poset $P$ and integers $n,m$ with $1<m\le 2^n$, we define the \textit{maximal anti-Ramsey number} $\arw(n,m,P)$ to be the minimum integer $k$ of colors such that there exist $\cF\subseteq 2^{[n]}$ with $|\cF|=m$ and a coloring $\cF\rightarrow [k]$ with all weak copies of $P$ in $\cF$ being rainbow.  

For any poset $P$ and integers $n,m$ with $1<m\le 2^n$, we define the \textit{maximal anti-Ramsey number} $\arm(n,m,P)$ to be the minimum integer $k$ of colors such that there exist $\cF\subseteq 2^{[n]}$ with $|\cF|=m$ and a coloring $\cF\rightarrow [k]$ with all strong copies of $P$ in $\cF$ being rainbow. 

Clearly, when $m\leq \LAw(n,P)$ or $\LA(n,P)$, we have $\arw(n,m,P)$ or $\arm(n,m,P)=1$, respectively. Hence, in the following content, we only consider the case where $m> \LAw(n,P)$ or $\LA(n,P)$.
\end{definition}

Observe that for any poset $P$ and integer $m$, we have $\arm(n,m,P)\le \arw(n,m,P)$. Indeed, if $\cF\subseteq 2^{[n]}$ with $|\cF|=m$ and $c:\cF\rightarrow [k]$ with $k=\arw(n,m,P)$ is a coloring with all weak copies of $P$ being rainbow, then, as all strong copies are also weak copies, all strong copies of $P$ are rainbow and thus $\arm(n,m,P)\le k$ as claimed.

\medskip

The two basic posets are chains and antichains, denoted $C_k$ and $A_k$ respectively, with the index indicating the number of elements in the poset. Note that every weak copy of $C_k$ is also a strong copy, thus $\arm(n,m,C_k)=\arw(n,m,C_k)$. Any family of $k$ sets forms a weak copy of $A_k$, so $\arw(n,m,A_k)=m$ for all $m\ge k$. Let $\Sigma(n,k)$ denote the sum of the $k$ largest coefficients of order $n$, i.e. $\Sigma(n,k)=\sum_{i=1}^{k}\binom{n}{\lfloor\frac{n-k}{2}\rfloor+i}$. Using the notations above, Erd\H os's theorem \cite{E} states $\LA(n,C_{k+1})=\LAw(n,C_{k+1})=\Sigma(n,k)$ for any $n\ge k$.

\begin{prop}
    For any $k\le \ell$ and $\Sigma(n,\ell)<m\le \Sigma(n,\ell+1)$, we have $\arm(n,m,C_{k+1})=\ell+1$. 
\end{prop}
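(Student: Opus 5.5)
We establish the upper bound by an explicit construction and the lower bound by Erd\H os's theorem. For the upper bound, let $\cF$ consist of $m$ sets taken from the $\ell+1$ middle layers of $2^{[n]}$. These layers have total size $\Sigma(n,\ell+1)\ge m$, so such a family exists; if $\Sigma(n,\ell)<m$ we simply take the full middle $\ell$ layers together with part of one adjacent layer. Color each set by its layer, so $\ell+1$ colors are used. Any copy of $C_{k+1}$ is a chain, and a chain has at most one set from each layer. Hence every copy of $C_{k+1}$ in $\cF$ is rainbow, which gives $\arm(n,m,C_{k+1})\le \ell+1$. Note that this holds for weak and strong copies alike, since for chains the two notions coincide.

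For the lower bound, suppose $\cF\subseteq 2^{[n]}$ with $|\cF|=m>\Sigma(n,\ell)$ is colored with at most $\ell$ colors, and every copy of $C_{k+1}$ is rainbow. We will derive a contradiction.

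Key step. We claim $\cF$ contains a chain of length $\ell+1$. By Erd\H os's theorem, $\LA(n,C_{\ell+1})=\Sigma(n,\ell)<m$, so $\cF$ is not $C_{\ell+1}$-free. Let $F_1\subsetneq\cdots\subsetneq F_{\ell+1}$ be such a chain. It has $\ell+1$ members but only $\ell$ colors are available, so by pigeonhole two of its members, say $F_i$ and $F_j$, share a color.

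Since $k\le \ell$, the chain has $\ell+1\ge k+1$ members. Choose any $k+1$ of them that include both $F_i$ and $F_j$; this is possible because $k+1\ge 2$, assuming $k\ge1$. Any subchain is itself a chain, hence a strong copy of $C_{k+1}$. This copy is not rainbow, contradicting the assumption on the coloring. Therefore at least $\ell+1$ colors are needed.

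The only delicate point is the correct use of Erd\H os's theorem: it requires $n\ge \ell$. This holds because otherwise $\Sigma(n,\ell)=2^n$, so no admissible $m$ exists and the statement is vacuous. Apart from this, the argument is pure pigeonhole.
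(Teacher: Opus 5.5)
Your proof is correct and follows the paper's argument: for the upper bound, take an $m$-subfamily of the $\ell+1$ middle layers and color by set size; for the lower bound, Erd\H os's theorem gives a chain of $\ell+1$ sets, which must be rainbow because any two of its members lie in a common strong copy of $C_{k+1}$. Your extra remarks, that $k\ge 1$ is implicitly assumed and that the statement is vacuous when $\ell>n$, are valid details the paper leaves unstated.
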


\begin{proof}
    To obtain the upper bound, consider a subfamily  $\cF$ of size $m$ of the $\ell+1$ middle layers of $2^{[n]}$ and color sets in $\cF$ by their size.

    To see the lower bound, by Erd\H os's theorem, any $\cF\subseteq 2^{[n]}$ of size $m$ contains a chain $\cC$ of length $\ell+1$. In order to have all copies of $C_{k+1}$ rainbow, all sets of $\cC$ need to receive distinct colors.
\end{proof}

\begin{prop}
    For any $k\le \ell+1\le \binom{n}{\lfloor n/2\rfloor}$ and $\LA(n,A_{\ell+1})<m\le \LA(n,A_{\ell+2})$, we have $\arm(n,m,A_k)=\ell+1$.
\end{prop}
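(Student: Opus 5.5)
The plan mirrors the proof of the previous proposition, with Dilworth's theorem playing the role that Erd\H os's theorem played for chains. Note first that a strong copy of $A_k$ in $\cF$ is just a $k$-element antichain of $\cF$, and that $\LA(n,A_{j})$ is the maximum size of a family $\cF\subseteq 2^{[n]}$ of width at most $j-1$, i.e.\ containing no antichain of $j$ sets. We may assume $k\ge 2$, since for $k=1$ every copy is trivially rainbow and the statement degenerates.

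\textbf{Upper bound.} Since $m\le \LA(n,A_{\ell+2})$, fix an extremal strong $A_{\ell+2}$-free family and take any subfamily $\cF$ of it of size $m$. Passing to a subfamily cannot create antichains, so $\cF$ has width at most $\ell+1$. By Dilworth's theorem, $\cF$ can be partitioned into $\ell+1$ chains $\cC_1,\dots,\cC_{\ell+1}$; some of these may be empty, which is harmless. Color each set of $\cF$ by the index of the chain containing it. An antichain meets every chain in at most one set, so every antichain of $\cF$, and in particular every strong copy of $A_k$, is rainbow. Hence $\arm(n,m,A_k)\le \ell+1$.

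\textbf{Lower bound.} Let $\cF\subseteq 2^{[n]}$ with $|\cF|=m$ and let $c$ be any coloring in which all strong copies of $A_k$ are rainbow. Since $m>\LA(n,A_{\ell+1})$, the family $\cF$ contains an antichain $\cA$ of $\ell+1$ sets. Take any two sets $A,B\in\cA$. Because $2\le k\le \ell+1=|\cA|$, we can choose a $k$-subset of $\cA$ containing both $A$ and $B$. This $k$-subset is itself an antichain, hence a strong copy of $A_k$. It must therefore be rainbow, which forces $c(A)\ne c(B)$. Thus all $\ell+1$ sets of $\cA$ receive distinct colors, so at least $\ell+1$ colors are used.

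There is no real obstacle here. The only points to check are that subfamilies of a width-$(\ell+1)$ family keep width at most $\ell+1$, and that the hypothesis $\ell+1\le\binom{n}{\lfloor n/2\rfloor}$ guarantees that the relevant range of $m$ is nonempty (by Sperner's theorem, antichains of size $\ell+1$ exist in $2^{[n]}$).
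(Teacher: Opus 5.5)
Your proof is correct and follows the paper's argument. The lower bound comes from a forced $(\ell+1)$-antichain whose sets must all receive distinct colors, and the upper bound from a Dilworth partition of an $A_{\ell+2}$-free family of size $m$ into $\ell+1$ chains, coloring each set by its chain. Your explicit step that any two sets of the antichain lie in a common $k$-subset is a detail the paper leaves implicit. One wording point: for $k=1$ and $\ell\ge 1$ the statement is false, not merely degenerate, since $\arm(n,m,A_1)=1$. So $k\ge 2$ is a hypothesis the statement needs (and which the paper also tacitly assumes), not a harmless reduction.
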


\begin{proof}
Any family $\cF$ of size $m$ contains a strong copy of $A_{\ell+1}$ and its sets need to be colored by distinct colors. This shows $\arm(n,m,A_k)\ge \ell+1$. On the other hand, there exists a family $\cF\subseteq 2^{[n]}$ of size $m$ containing no $A_{\ell+2}$. By Dilworth's theorem $\cF$ can be partitioned into $\ell+1$ chains. Coloring the sets in $\cF$ according to the chain they belong to yields a coloring showing $\arm(n,m,A_k)\le \ell+1$.
\end{proof}

For a poset $P$ and a family $\cF$ of sets, let $G^*_P(\cF)$ ($G_P(\cF)$) denote the graph on vertex set $\cF$ with $F,G\in \cF$ being joined by an edge if and only if there exists a strong (weak) copy of $P$ in $\cF$ containing both $F$ and $G$. Clearly, $G^*_P(\cF)$ is a subgraph of $G_P(\cF)$ for all posets $P$ and families $\cF$. We will write $G^*_P(n)$ and $G_P(n)$ to denote $G^*_P(2^{[n]})$ and $G_P(2^{[n]})$, respectively. Also, $G_P[\cF]$ (and $G_P^*[\cF]$) denotes the subgraph of $G_P(n)$ ($G^*_P(n)$) induced on $\cF$. We have $G_P(\cF)\subseteq G_P[\cF]$ and $G_P^*(\cF)\subseteq G_P^*[\cF]$ as it might happen that all weak or strong copies of $P$ containing $F,F'$ also contain at least one set from $2^{[n]} \setminus \cF$.

\begin{prop}\label{chromatic}
    For any poset $P$ and integers $m,n$ we have $\arm(n,m,P)=\min\{\chi(G^*_P(\cF)): |\cF|=m,\cF\subseteq 2^{[n]}\}$ and $\arw(n,m,P)=\min\{\chi(G_P(\cF)): |\cF|=m,\cF\subseteq 2^{[n]}\}$. 
\end{prop}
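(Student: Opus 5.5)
The claim reduces to one observation: for a fixed family $\cF$, a coloring $c:\cF\rightarrow[k]$ makes every strong (weak) copy of $P$ in $\cF$ rainbow if and only if $c$ is a proper vertex coloring of $G^*_P(\cF)$ (respectively $G_P(\cF)$). Once this equivalence is established for each fixed $\cF$, the least $k$ that works for $\cF$ is exactly $\chi(G^*_P(\cF))$ (resp. $\chi(G_P(\cF))$). Minimizing over all $\cF\subseteq 2^{[n]}$ with $|\cF|=m$ then yields the stated formulas directly from the definitions of $\arm(n,m,P)$ and $\arw(n,m,P)$. I will write out only the strong case; the weak case is the same argument with ``strong'' replaced by ``weak''.

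For the forward direction, suppose all strong copies of $P$ in $\cF$ are rainbow under $c$, and let $FG$ be an edge of $G^*_P(\cF)$. By definition there is a strong copy $\cG\subseteq\cF$ of $P$ with $F,G\in\cG$. Since $\cG$ is rainbow and $F\neq G$, we get $c(F)\neq c(G)$, so $c$ is proper.

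For the converse, suppose $c$ is a proper coloring of $G^*_P(\cF)$ and let $\cG\subseteq\cF$ be any strong copy of $P$. Any two distinct sets $F,G\in\cG$ lie together in the copy $\cG$, so they are adjacent in $G^*_P(\cF)$ and receive different colors. Hence $\cG$ is rainbow.

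There is no genuine obstacle here. The one point to be careful about is that the graph is $G^*_P(\cF)$ and not $G^*_P[\cF]$: edges must be witnessed by copies lying entirely inside $\cF$. This matches the definition, which only requires rainbowness of copies inside $\cF$. Using the induced subgraph $G^*_P[\cF]$ instead could overcount constraints and give a larger chromatic number.
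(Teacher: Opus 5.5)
Your proof is correct and is essentially the paper's argument. The paper also shows that a proper coloring of $G^*_P(\cF)$ makes every strong copy rainbow, because each copy is a clique, and that any rainbow-making coloring is proper. It just states these as the two inequalities $\le$ and $\ge$ directly, instead of as an equivalence for each fixed $\cF$ followed by minimization.
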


\begin{proof}
    We only consider the case of strong copies as the proof for weak copies is analogous.
    First, let $\cF\subseteq 2^{[n]}$ be a family of size $m$ such that the chromatic number of $G^*_P(\cF)$ is minimum possible. Then a proper coloring of $G^*_P(\cF)$ gives a coloring of $\cF$ such that all strong copies of $P$ are rainbow. Indeed, by definition, the sets of any strong copy of $P$ form a clique in $G^*_P(\cF)$, so in a proper coloring they should receive distinct colors. This shows   $\arm(n,m,P)\le \min\{\chi(G^*_P(\cF): |\cF|=m,\cF\subseteq 2^{[n]}\}$.

    Conversely, let $\cF\subseteq 2^{[n]}$ of size $m$ and $c:\cF\rightarrow [k]$ with $k=\arm(n,m,P)$ and all strong copies of $P$ being rainbow with respect to $c$. Then $c$ is a proper coloring of $G^*_P(\cF)$ as if $F,G\in \cF$ are contained in a strong copy of $P$ in $\cF$, then this strong copy is rainbow, so $c(F)\neq c(G)$. This shows  $\arm(n,m,P)\ge \min\{\chi(G^*_P(\cF): |\cF|=m,\cF\subseteq 2^{[n]}\}$. 
\end{proof}

\subsection{Our results}

If $P$ is not an antichain, then the family of all sets of size $\lfloor n/2\rfloor$ shows $\LAw(n,P),\LA(n,P)\ge \binom{n}{\lfloor n/2\rfloor}$. As a chain of size $|P|$ is a weak copy of $P$, Erd\H os's result on $\LA(n,C_{k+1})$ shows $\LAw(n,P)=\Theta(\binom{n}{\lfloor n/2\rfloor})$. The result  $\LA(n,P)=\Theta(\binom{n}{\lfloor n/2\rfloor})$ was proved by Methuku and P\'alv\"olgyi \cite{MP}. We will address determining $\arw(n,m,P)$ and $\arm(n,m,P)$ at the two extreme values of $m$: when $m=2^n$ is maximum possible and so the only family of size $m$ is $2^{[n]}$, and when $m=\Theta(\binom{n}{\lfloor n/2\rfloor})$ is minimum possible.

\medskip

To state our results, we need a couple of definitions.
Let $U_P(x)=\{y\in P: x\prec y\}, D_P(x)=\{y\in P: y\prec x\}$, where $\prec$ means $x\preceq y$ and $x\neq y$. Write $U_P[x]=U_P(x)\cup \{x\}$ and $D_P[x]=D_P(x)\cup \{x\}$. 

A pair $p,q\in P$ of incomparable elements is \textit{weak bad} if any of the following hold:
\begin{itemize}
    \item A pair $(p,q)$ is weak upper bad: if $|U_P(p)\cap U_P(q)|\ge 2$.
    \item A pair $(p,q)$ is weak lower bad: if $|D_P(p)\cap D_P(q)|\ge 2$.
\end{itemize}
An incomparable pair that is not weak bad is \textit{weak good}. Basically, $p,q$ being weak bad means that for any complement pair of sets $F,\bar{F}=[n]\setminus F$, there is no weak copy of $P$ with $F,\bar{F}$ playing the role of $p$ and $q$. So if all incomparable pairs are weak bad, then $F\bar{F}$ is not an edge in $G_P(n)$ and therefore $F$ and $\bar{F}$ can be colored with the same color. Our first theorem states that this is the only factor that matters when determining the asymptotics of $\arw(n,2^n,P)$.

\begin{thm}\label{weakm=2n}
    For any $P\neq A_k,C_k$, we have the following:
    \begin{itemize}
        \item 
        If $P$ contains a weak good incomparable pair, then $\arw(n,2^n,P)=(1+o(1))2^n$.
        \item 
        If all incomparable pairs of $P$ are weak bad, then $\arw(n,2^n,P)=(\frac{1}{2}+o(1))2^n$.
    \end{itemize}
\end{thm}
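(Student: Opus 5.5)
By Proposition~\ref{chromatic}, with $m=2^n$ the only family is $2^{[n]}$, so $\arw(n,2^n,P)=\chi(G_P(n))$, and I would prove both items by bounding this chromatic number.

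\textbf{Upper bounds.} The bound $2^n$ is trivial. If all incomparable pairs of $P$ are weak bad, colour $\emptyset$ and $[n]$ with their own colours, and give each pair $\{F,\bar F\}$ with $\emptyset\ne F\ne[n]$ one common colour; this uses $2^{n-1}+1$ colours. This colouring is proper because $F$ and $\bar F$ can never lie in a common weak copy of $P$. They cannot play comparable elements $p\prec q$, since $F\subseteq \bar F$ forces $F=\emptyset$. If they play an incomparable pair $(p,q)$, that pair is weak bad. If it is upper bad, the two common upper elements need two distinct sets containing $F\cup\bar F=[n]$, which is impossible. If it is lower bad, symmetrically we would need two distinct subsets of $F\cap\bar F=\emptyset$.

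\textbf{Lower bounds.} Let $\cT$ be the family of typical sets $F$ with $\bigl||F|-n/2\bigr|\le n^{2/3}$, so $|\cT|=(1-o(1))2^n$. Then $\chi(G_P(n))\ge |\cT|/\alpha(G_P[\cT])$. The core is an embedding lemma: for distinct $F,G\in\cT$ there is a weak copy of $P$ containing both, provided that
\begin{itemize}
\item $F$ and $G$ are comparable, or
\item $F$ and $G$ are incomparable and $P$ has an incomparable pair $(p,q)$ such that $|U_P(p)\cap U_P(q)|\le 1$ whenever $F\cup G=[n]$, and $|D_P(p)\cap D_P(q)|\le1$ whenever $F\cap G=\emptyset$.
\end{itemize}

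Granting the lemma, the two cases go as follows.
\begin{itemize}
\item \emph{A weak good pair exists.} Every two distinct typical sets are adjacent, so $\alpha(G_P[\cT])=1$ and $\chi\ge(1-o(1))2^n$.
\item \emph{All pairs are weak bad.} Any upper-bad-only pair realises incomparable $F,G$ with $F\cup G\neq[n]$, and any lower-bad-only pair realises those with $F\cap G\ne\emptyset$. So if $F,G\in\cT$ are non-adjacent, then either $F\cup G=[n]$ or $F\cap G=\emptyset$ (and $F,G$ are incomparable).

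I claim an independent set in $\cT$ has at most $2$ elements. Suppose three typical sets were pairwise non-adjacent. Three typical sets cannot pairwise have union $[n]$, since their complements would be three pairwise disjoint sets of size about $n/2$. Likewise they cannot be pairwise disjoint. So some $A,B,C$ among them satisfy $A\cup B=[n]$ and $C\cap A=\emptyset$. Then $C\subseteq \bar A\subseteq B$, so $B$ and $C$ are comparable and hence adjacent, a contradiction. Thus $\chi\ge(\frac12-o(1))2^n$.
\end{itemize}

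\textbf{The embedding lemma (main obstacle).} For comparable $F\subsetneq G$, pick a cover pair $p\prec q$ in $P$ (one exists since $P\neq A_k$). Take a linear extension of $P$ in which $p,q$ are consecutive. Map the earlier elements to a chain of distinct subsets of $F$, and the later elements to a chain of distinct supersets of $G$. Typicality gives room for both chains.

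For incomparable $F,G$ (an incomparable pair exists since $P\ne C_k$), I would define
\[
\iota(x)=\Bigl(L_{h(x)}\cup \mathbf{1}[p\preceq x]F\cup\mathbf{1}[q\preceq x]G\Bigr)\cap \bigl(F\text{ if }x\preceq p\bigr)\cap\bigl(G\text{ if }x\preceq q\bigr),
\]
where $h$ is the position of $x$ in a linear extension and $L_1\subset L_2\subset\cdots$ is a fixed chain of small sets. The chain is spread so that its increments fall in each of the regions $F\cap G$, $F\setminus G$, $G\setminus F$, $\overline{F\cup G}$ whenever these are large.

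Every operation in this formula is monotone, so order is preserved. What has to be checked by hand is injectivity and that $\iota(p)=F$ and $\iota(q)=G$. Elements of $U_P(p)\cap U_P(q)$ land above $F\cup G$, so when $F\cup G=[n]$ at most one of them can be placed, namely at $[n]$. Dually, elements of $D_P(p)\cap D_P(q)$ land below $F\cap G$, so when $F\cap G=\emptyset$ at most one can be placed, at $\emptyset$. This is exactly the hypothesis of the lemma. Getting the case analysis for distinctness right, especially when one of the four regions is empty, is the step I expect to require the most care.
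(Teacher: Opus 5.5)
\textbf{Verdict: genuine but fixable gap.} Your overall plan matches the paper: complementary pairs for the upper bound, then $\alpha(G_P[\cT])=1$ or $\le 2$ for the lower bound. Your upper bounds and the weak-good case are fine; with a weak good pair at most one set must sit below $F\cap G$, resp.\ above $F\cup G$, so no size condition ever arises. The gap is in the embedding lemma, on which the weak-bad case rests. As stated it is false, because it treats only exact fullness $F\cup G=[n]$ and exact emptiness $F\cap G=\emptyset$ as obstructions. A nonempty but tiny $F\cap G$ obstructs just as well.

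\emph{Counterexample.} Let $P$ consist of a chain $a_1\prec a_2\prec a_3$, two incomparable elements $p,q$ above $a_3$, and a chain $b_1\prec b_2\prec b_3$ above both $p$ and $q$. Then $(p,q)$ is the only incomparable pair, and it is weak bad. For even $n$, take $F,G$ with $|F|=|G|=n/2$ and $|F\cap G|=1$. Both sets are typical, $F\cup G\neq[n]$ and $F\cap G\neq\emptyset$, so your lemma promises a weak copy of $P$ through $F$ and $G$. But $F,G$ are incomparable, so in any weak copy they must play $p$ and $q$. Then $a_1,a_2,a_3$ need three distinct subsets of the one-element set $F\cap G$, which is impossible. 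Your own formula shows the same problem: on $D_P(p)\cap D_P(q)$ it takes the values $L_{h(x)}\cap F\cap G$, of which there are at most $2^{|F\cap G|}$.

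\emph{Consequences.} The dichotomy ``non-adjacent $\Rightarrow$ $F\cup G=[n]$ or $F\cap G=\emptyset$'' is false. Hence your triangle argument, which needs the exact inclusion $C\subseteq \bar A\subseteq B$, does not go through. Your sentence about upper-bad-only pairs fails in the same way when $n-|F\cup G|=1$ and $|U_P(p)\cap U_P(q)|\ge 3$.

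\emph{Repair.} Make the lemma quantitative, as the paper does. By Proposition \ref{intcoint}, incomparable $F,G$ with $|F\cap G|>|P|$ and $n-|F\cup G|>|P|$ lie in a common copy of any $P\neq C_k$, even a strong one. Then run your triangle argument robustly.
\begin{itemize}
\item Three typical sets cannot pairwise satisfy $|X\cap Y|\le |P|$, since their sizes sum to about $3n/2$. Passing to complements, they also cannot pairwise satisfy $n-|X\cup Y|\le|P|$.
\item So among three pairwise non-adjacent typical sets, some $A,B,C$ satisfy $|A\cap C|\le |P|$ and $n-|A\cup B|\le |P|$.
\item Every element of $C\setminus B$ lies either in $C\cap A$ or outside $A\cup B$, so $|C\setminus B|\le 2|P|$.
\item Hence $B,C$ are either comparable, or incomparable with $|B\cap C|\ge |C|-2|P|>|P|$ and $n-|B\cup C|\ge n-|B|-2|P|>|P|$. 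In both cases they are adjacent, a contradiction.
\end{itemize}
The paper's Proposition \ref{independent} reaches the same conclusion by a direct size count. With this correction your proof is complete and is essentially the paper's argument.
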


The situation is much more interesting for strong copies of $P$ and for $\arm(n,2^n,P)$. We need to alter the definition of being bad.

A pair $p,q\in P$ of incomparable elements is \textit{strong bad} if any of the following hold:
\begin{itemize}
    \item A pair $(p,q)$ is strong upper bad: if $|U_P(p)\cap U_P(q)|\ge 2$ or $|U_P(p)\cap U_P(q)|=1$ and $P$ does not have a largest element.
    \item A pair $(p,q)$ is strong lower bad: if $|D_P(p)\cap D_P(q)|\ge 2$ or $|D_P(p)\cap D_P(q)|=1$ and $P$ does not have a smallest element.
\end{itemize}

An incomparable pair is \textit{strong good} if it is not strong bad. A poset $P$ is \textit{incomparable good} if it contains a strong good incomparable pair, otherwise $P$ is incomparable bad.  

Let us write $\cM_n=\{F\in 2^{[n]}:||F|-n/2|\le n^{2/3}\}$ to denote the subfamily of $2^{[n]}$ consisting of all sets whose size is close to $n/2$. By Chernoff's inequality, we have $|2^{[n]}\setminus \cM_n|=o(2^n)$. Clearly, writing $\alpha(G)$ for the independence number of a graph $G$, we have 
$$\arm(n,2^n,P)=\chi(G_P^*(n))\ge \chi(G_P^*[\cM_n]) \ge \frac{|\cM_n|}{\alpha(G_P^*[\cM_n])}=\frac{(1-o(1))2^n}{\alpha(G_P^*[\cM_n])}.$$

Our next result states that asymptotically we have equality in the above chain of inequalities.

\begin{thm}\label{strongm=2n}
    For any poset $P\neq A_k,C_k$, we have
    $$\arm(n,2^n,P)=\frac{(1-o(1))2^n}{\alpha(G_P^*[\cM_n])}.$$
    Furthermore, $\alpha(G_P^*[\cM_n])$ can take any positive integer as its value.
\end{thm}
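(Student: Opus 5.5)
The lower bound is already established in the chain of inequalities preceding the statement. What remains is a matching upper bound: a proper colouring of $G_P^*(n)$ with $(1+o(1))2^n/\alpha$ colours, where $\alpha=\alpha(G_P^*[\cM_n])$. After that we must show that every positive integer occurs as such an $\alpha$.

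\textbf{Upper bound.} The sets outside $\cM_n$ number $o(2^n)$, so we give each of them its own colour. It then suffices to colour $G_P^*[\cM_n]$ properly with $(1+o(1))|\cM_n|/\alpha$ colours. We may do this because $G_P^*(n)$ restricted to $\cM_n$ is a subgraph of $G_P^*[\cM_n]$, and fresh colours outside $\cM_n$ cannot create conflicts.

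The central step is to understand the structure of independent sets in $G_P^*[\cM_n]$. For $F,G\in\cM_n$ that are comparable, or incomparable with $|F\triangle G|$ large, a strong copy of $P$ through both should exist; this uses $P\neq A_k,C_k$ and the room available in the middle of the cube. So non-edges can only occur between pairs of very special form, most naturally near-complementary pairs or pairs related by some fixed symmetric pattern. From this we read off that $\alpha$ is a constant $a$ depending only on $P$, realised by independent sets of a rigid shape. Examples are $\{F,\bar F\}$, or small configurations built from a partition of $[n]$ into a bounded number of parts, which are forced to be pairwise non-adjacent by the bad-pair conditions.

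We then tile almost all of $\cM_n$ by vertex-disjoint maximum independent sets of this shape and give each tile one colour. For $\{F,\bar F\}$ this is just the complement pairing. For general shapes, apply a fixed-point-free group action (for example, cyclic shifts of a fixed block structure) whose orbits are such independent configurations. Alternatively, apply a Pippenger--Spencer / R\"odl nibble argument to the auxiliary hypergraph whose edges are the maximum independent sets. By vertex-transitivity under permutations of $[n]$ this hypergraph is essentially regular, with small codegrees, and the nibble yields an almost-perfect matching. The leftover $o(2^n)$ vertices receive distinct colours.

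\textbf{Main obstacle.} The hard step is showing that independent sets in $G_P^*[\cM_n]$ have bounded size and rigid structure. This requires showing that for generic pairs in $\cM_n$ a strong copy of $P$ really exists: given $F,G$, embed $P$ by choosing the other sets as unions and intersections with fresh random elements. The strong condition (exact incomparabilities) is the delicate part. I would first try a random embedding in which each element of $P$ is mapped to $F$- or $G$-based sets padded with disjoint private blocks, so that non-relations are witnessed by the private elements.

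\textbf{All positive integers occur.} For each $a\ge1$, exhibit a poset $P_a$ whose maximum independent sets are $a$ sets pairwise forming strong bad pairs. One candidate is an $a$-dependent poset in which the largest/smallest element conditions (the strong-bad definition) exclude configurations of $a$ sets covering $[n]$ with small overlaps, such as $a$ blocks of a partition complemented. Then verify $\alpha=a$ by the structure lemma above. The value $1$ should come from a poset with a strong good pair, and the value $2$ from a bad poset with the complement pairing.
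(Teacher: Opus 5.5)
There is a genuine gap. It sits in the step you flag as the main obstacle, and your guessed structure of independent sets is wrong in a way that breaks the second half of the statement.

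Comparable pairs are \emph{not} always edges. In a strong copy, sets $F\subset G$ must play some $p\prec q$, so if $|G\setminus F|<\sigma(P)$ they lie in no common strong copy. Conversely, by Proposition \ref{reconstruct} every $F\subset G$ in $\cM_n$ with $|G\setminus F|\ge\sigma(P)$ is an edge. So chains of consecutive sets spanning fewer than $\sigma(P)$ new elements are independent, and this is what produces large $\alpha$.

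For incomparable pairs the picture is the opposite of yours. By Proposition \ref{intcoint}, a non-adjacent incomparable pair in $\cM_n$ has $|F\cap G|\le |P|$ or $n-|F\cup G|\le|P|$. It is therefore nearly complementary, which means \emph{large} symmetric difference, not small. Three pairwise incomparable, pairwise non-adjacent sets in $\cM_n$ are then impossible: two sets nearly complementary to a third are nearly equal, not nearly complementary. Hence every independent set is a union of at most two chains (Proposition \ref{independent}). In particular, a poset $P_a$ whose maximum independent sets are $a$ sets pairwise forming strong bad pairs cannot exist for $a\ge 3$, so your plan for ``every positive integer occurs'' fails.

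The working route goes through $\sigma$. For incomparable-good $P$, Proposition \ref{goodpair} makes every incomparable pair in $\cM_n$ an edge, so $\alpha(G_P^*[\cM_n])=\sigma(P)$. The standard example $S_k$ ($k\ge 3$) is incomparable good with $\sigma(S_k)=k-2$, which gives every positive integer. Note that a strong good pair alone does not give $\alpha=1$; you also need $\sigma(P)=1$.

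The upper bound is likewise not established. A nibble argument needs near-regularity and small codegrees for the hypergraph of maximum independent sets. It also needs care at the boundary layers of $\cM_n$, where degrees drop by constant factors. You cannot check any of this without the structure.

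Once the structure is known, the tiling is explicit and needs no nibble or group action:
\begin{itemize}
\item In the incomparable-good case, split each chain of a symmetric chain decomposition into blocks of $\sigma(P)$ consecutive sets.
\item When $P$ is incomparable upper or lower bad, use complementary pairs $\{F,\bar F\}$; there $\sigma(P)=1$, so $\alpha=2$.
\item In the mixed bad case, use short blocks from the paired chains $\cC^-,\cC^\perp$ of a twisted chain decomposition. Every cross pair there is disjoint or has union $[n]$, so it is a non-edge by the definitions of $\lambda^-(P),\lambda^+(P)$ once the pieces are short enough.
\end{itemize}
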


We shall determine the value of $\alpha(G_P^*[\cM_n])$ for all posets, but we need more categories to describe our result. A poset is \textit{incomparable upper (lower) bad} if all incomparable pairs are strong upper (lower) bad. Finally, $P$ is \textit{incomparable mixed bad} if every incomparable pair of $P$
is strong bad, but $P$ is neither incomparable upper bad nor incomparable lower bad. 

For a poset $P\neq A_k,C_k$, define 
$$\mathcal{E}(P)=\{(N,\iota):\text{ $N$ is a positive integer and $\iota :P\rightarrow 2^{[N]}$ is a strong embedding}\}.$$
Let $$\sigma(P)=\min_{\substack{(N,\iota)\in \mathcal{E}(P)\\ p\prec q\in P}}\{|F'\backslash F|: \text{$\iota(q)=F'$ and $\iota(p)=F$}\}.$$ 
We write $p\perp q$ to denote the fact that $p,q$ are incomparable in $P$. For mixed bad posets $P$ we define
$$\lambda^-(P)=\min_{\substack{(N,\iota)\in \mathcal{E}(P)\\ p\perp q\in P}}\{|F\cap F'|: \text{$\iota(p)=F$,  $\iota(q)=F'$ and $F,F'$ satisfying $F\cup F'=[N]$} \}$$ and 
$$\lambda^+(P)=\min_{\substack{(N,\iota)\in \mathcal{E}(P)\\ p\perp q\in P}}\{N-|F\cup F'|: \text{$\iota(p)=F$,  $\iota(q)=F'$ and $F,F'$ satisfying $F\cap F'=\emptyset$} \}.$$ 

\begin{thm}\label{dichotomy}
    For any poset $P\neq A_k,C_k$ and $n$  sufficiently large, an independent set in $G_P^*[\cM_n]$ is either a chain of sets or a union of two chains. Furthermore, we have the following:
    \begin{enumerate}
        \item 
        If $P$ is incomparable-good, then $\alpha(G_P^*[\cM_n])=\sigma(P)$, where $\sigma(P)$ can be arbitrary large.
        \item 
        If $P$ is incomparable-upper bad or incomparable-lower bad, then $\alpha(G_P^*[\cM_n])=2$.
        \item 
        If $P$ is incomparable-mixed bad, then 
        $\alpha(G_P^*[\cM_n])=\max\{\sigma(P),\min\{\lambda^-(P)+\lambda^+(P),2\sigma(P)\}\}$.
    \end{enumerate}
\end{thm}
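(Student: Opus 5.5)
The plan rests on one structural fact: inside $\cM_n$, for $n$ large, two sets $F,G$ are adjacent in $G^*_P[\cM_n]$ unless their relative position is ``tight''. Here tightness is measured by the four quantities $|F\setminus G|$, $|G\setminus F|$, $|F\cap G|$ and $n-|F\cup G|$, compared against the embedding parameters $\sigma(P)$, $\lambda^\pm(P)$. Because $F,G\in\cM_n$ have size about $n/2$, for an incomparable pair the intersection and the complement of the union cannot both be small. Also, $|F\setminus G|$ and $|G\setminus F|$ differ by at most $2n^{2/3}$.

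\textbf{Step 1: a lemma characterizing edges.} First I would prove the following. If $F\subsetneq G$, then $FG$ is an edge iff some strong embedding of $P$ places $F,G$ on a comparable pair, and there is enough room. Enough room means $|G\setminus F|\ge \sigma(P)$, and for $n$ large the room above $G$ and below $F$ is automatic.

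If $F\perp G$, the condition depends on the free coordinates. When $|F\cap G|$ and $n-|F\cup G|$ are both large, $F,G$ play the role of any incomparable pair. If $n-|F\cup G|$ is small, $F\cup G$ is nearly $[n]$, so no common upper bound beyond a few exists. Then the pair is realizable only by a pair $p\perp q$ with few common upper elements, or one whose common upper elements fit into $[n]\setminus(F\cup G)$; this is exactly where $\lambda^+$ and the ``largest element'' clauses in strong upper badness enter. Lower situations are symmetric with $\lambda^-$.

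The embeddings are built by taking a fixed $(N,\iota)\in\mathcal{E}(P)$ and padding with blocks of fresh coordinates. Strongness is preserved because padding adds the same elements to comparable sets. Nonedges come from the forced inclusions $\iota(p)\cap\iota(q)\supseteq\iota(r)$ for $r$ below both.

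\textbf{Step 2: independent sets.} Step 1 shows that an incomparable pair in an independent set must have $F\cup G=[n]$ minus at most a constant, or $F\cap G$ of at most constant size. In $\cM_n$, three pairwise such sets cannot all be mutually ``near-complementary''. Near-complementarity is an approximate involution: $G$ near-complementary to $F$ and $H$ near-complementary to $F$ forces $G,H$ to be close, hence comparable-or-adjacent. A Dilworth/pigeonhole argument then gives that the independent set has width at most $2$, so it is a chain or a union of two chains.

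A chain is independent iff consecutive differences are below the threshold, and total length is bounded. This gives size at most $\sigma(P)$: a chain $F_1\subset\dots\subset F_s$ with $|F_s\setminus F_1|<\sigma$ has $s\le\sigma$, and equality is achievable.

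\textbf{Step 3: the cases.} (1) If $P$ is incomparable-good, then every incomparable pair in $\cM_n$ is an edge, so $\alpha=\sigma(P)$. Unboundedness of $\sigma$ follows from a family like a poset whose comparable pairs force many intermediate elements of fixed type, e.g.\ a fat ``diamond'' tower. (2) If $P$ is upper or lower bad, take $\{F,\bar F\}$ for independence. For the upper bound, a chain of size $2$ plus the complement structure collapses, because one-sided badness kills only one type of near-complementary pair while comparable pairs with difference $1$ are edges (here $\sigma=1$ must be shown for such $P$). (3) In the mixed case, take two chains $\cC_1,\cC_2$ whose cross pairs are all near-complementary both from above and below. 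The constraints are $|F\cap G|<\lambda^-$ or $n-|F\cup G|<\lambda^+$ across the chains, giving total size $\le\lambda^-+\lambda^+$, and each chain contributes $\le\sigma$. Optimizing gives the stated max/min formula.

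The main obstacle is Step 1 for incomparable pairs: proving that the parameters $\lambda^\pm$ exactly capture when a near-complementary pair extends to a strong copy inside $\cM_n$. This is particularly delicate in the mixed-bad upper bound, where I expect a careful exchange argument on embeddings.
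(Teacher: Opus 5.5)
\textbf{Verdict: the proposal has genuine gaps.} Your skeleton matches the paper. Incomparable non-adjacent pairs in $\cM_n$ must be near-complementary (Proposition~\ref{intcoint}), which gives width at most $2$. Padding shows that a chain in an independent set has span $|F_s\setminus F_1|<\sigma(P)$; note this is the total span, not ``consecutive differences'' as you wrote. But case (3), the real content of the theorem, is not proved.

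\textbf{The upper bound in case (3).} The constraint you state, $|F\cap G|<\lambda^-$ \emph{or} $n-|F\cup G|<\lambda^+$, gives nothing beyond $r+s\le 2\sigma(P)$. In $\cM_n$ a pair with tiny intersection can still have $n-|F\cup G|$ of order $n^{2/3}$. You need that every non-adjacent incomparable pair satisfies \emph{both} inequalities. This follows by applying the padding/monotonicity lemma (Proposition~\ref{incom}) to the two extremal embeddings, those with $(|F\cap F'|,N-|F\cup F'|)$ equal to $(\lambda^-,0)$ and $(0,\lambda^+)$. You then still have to do two things.
\begin{enumerate}
\item Show that all cross pairs $A_i,B_j$ are incomparable. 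If $A_i\subset B_j$, then some incomparable cross pair would have intersection and co-union both of order $n/2$, hence be an edge.
\item Count. Set $\kappa(i,j)=(n-|A_i\cup B_j|,|A_i\cap B_j|)$ and follow $\kappa(0,0),\dots,\kappa(0,s-1),\kappa(1,s-1),\dots,\kappa(r-1,s-1)$. Consecutive terms differ, the first coordinate never increases, and the second never decreases. Hence $r+s-1\le(\lambda^+-1)+(\lambda^--1)+1$.
\end{enumerate}
``Optimizing'' does not substitute for this argument.

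\textbf{The matching construction in case (3).} The lower bound $\min\{\lambda^++\lambda^-,2\sigma(P)\}$ also needs a specific construction. Non-adjacency of an incomparable pair is certified only by the minimality in the definitions of $\lambda^\pm$. That is, it is known only when the pair is exactly disjoint with $n-|F\cup G|\le\lambda^+-1$, or exactly covers $[n]$ with $|F\cap G|\le\lambda^--1$. For pairs where both quantities are positive you know nothing. The paper therefore proceeds as follows.
\begin{itemize}
\item Take $r=\lfloor(\lambda^++\lambda^-)/2\rfloor$ consecutive sets $C_0\subset\dots\subset C_{r-1}$ of one chain $\cC$ of a symmetric chain decomposition.
\item Take $s=\lceil(\lambda^++\lambda^-)/2\rceil$ consecutive complements $[n]\setminus C$ of sets $C$ of the same chain, offset so that $n-|C_0\cup C_0^\perp|=\lambda^+-1$.
\item Every cross pair is then disjoint or covering with the right bounds.
\item When $s>\sigma(P)$, truncate each side to $\sigma(P)$ sets.
\end{itemize}

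\textbf{Case (2): you flag but do not prove $\sigma(P)=1$.} An incomparable-upper-bad $P$ has a largest element $1$, since otherwise two maximal elements would have no common upper element. Then $P\setminus\{1\}$ again has a largest element $1'$: two of its maximal elements would have only $1$ above them, and that is not strong upper bad because $P$ has a largest element. So the cover $1'\prec 1$ embeds with a one-point difference.

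\textbf{Case (1): your fat diamond tower does not show $\sigma$ is unbounded.} $\sigma$ is a minimum over all comparable pairs, including covers. A diamond $0\prec a_1,\dots,a_m\prec 1$ has $\sigma=1$ via $0\mapsto\emptyset$, $a_i\mapsto\{i\}$. Stacking diamonds makes every incomparable pair strong bad. The paper uses the standard example $S_k$ instead. There $\iota(b_j)\setminus\iota(a_i)$ must contain a private point of every $a_h$ with $h\neq i,j$, which gives $\sigma(S_k)=k-2$.
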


Now we turn our attention to the case $m=\Theta(\binom{n}{\lfloor n/2\rfloor})$. The \textit{oriented Hasse diagram} $\overrightarrow{H}(P)$ of a poset $P$ is an oriented graph with vertex set $P$ where $\overrightarrow{pq}$ is an arc if $q$ covers $p$, i.e. $p\leqslant_P q$ and there is no $z\neq p,q$ with $p\leqslant_P z\leqslant_P q$. We obtain the \textit{(unoriented) Hasse diagram} $H(P)$ of $P$ from $\overrightarrow{H}(P)$ by dropping the orientation of all arcs. A poset is a \textit{tree poset} if $H(P)$ is a tree. Bukh \cite{B} proved $\LAw(n,T)=(h(T)-1+o(1))\binom{n}{\lfloor n/2\rfloor}$, where $h(T)$ is the height of $T$, the largest integer $k$ for which $T$ contains a copy of $C_k$. Boehnlein and Jiang \cite{BJ} strengthened this result to $\LA(n,T)=(h(T)-1+o(1))\binom{n}{\lfloor n/2\rfloor}$. The lower bounds in both of these results are given by the middle $h(T)-1$ layers of $2^{[n]}$. We write $\cM_{n,k}=\cup_{i=1}^{k}\binom{[n]}{\lfloor \frac{n-k}{2}\rfloor+i}$ to denote the family of the $k$ middle layers.

\begin{prop}\label{exponent}
    For any connected poset $P\neq C_k$ and integer $k$, there exist non-negative integers $m_{P,k}$ and $m^*_{P,k}$ such that $$\chi(G_P(\cM_{n,k}))=\Theta(n^{m_{P,k}}), \quad \chi(G^*_P(\cM_{n,k}))=\Theta(n^{m^*_{P,k}}).$$
\end{prop}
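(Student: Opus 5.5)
The plan is to show that adjacency in $G_P(\cM_{n,k})$ and in $G^*_P(\cM_{n,k})$ depends only on a bounded number of parameters of the pair $(F,G)$. Then $\chi$ splits into layer-pair pieces, each governed by a fixed finite set of forbidden distances, and the exponent of each piece is an integer.

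\textbf{Step 1: reduction to distance types.} Since $P$ is connected, the Hasse diagram of any weak or strong copy of $P$ inside $\cM_{n,k}$ is connected and has at most $|P|-1$ edges. Each edge joins two sets in the middle $k$ layers, so it changes at most $k-1$ elements. Hence any two sets $F,G$ in a common copy satisfy $|F\triangle G|\le d:=(|P|-1)(k-1)$.

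Write the \emph{type} of an ordered pair as $(i,j,|F\setminus G|,|G\setminus F|)$, where $i,j\in[k]$ are the layer indices within $\cM_{n,k}$. The symmetric group $S_n$ preserves $\cM_{n,k}$ and acts transitively on pairs of a given type. So whether $FG$ is an edge depends only on the type.

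Moreover, a copy of $P$ through $F,G$ lives on $F\cup G$ together with at most $d|P|$ further elements. Therefore, once $n$ is large (with the parity of $n$ fixed), the set $\mathcal T$ of edge types is independent of $n$. This gives a finite set $\mathcal T$ of types, all with $|F\triangle G|\le d$. The same argument works for strong copies, giving a set $\mathcal T^*$.

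\textbf{Step 2: reduction to single layers and the upper bound.} Colouring each layer separately and prefixing the layer index changes $\chi$ by at most a factor of $k$. Hence $\chi(G_P(\cM_{n,k}))=\Theta(\max_i \chi_i)$, where $\chi_i$ is the chromatic number of the graph on $\binom{[n]}{\ell_i}$ whose edges are the pairs at distances $t\in D_i:=\{t:(i,i,t,t)\in\mathcal T\}$.

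A polynomial upper bound comes from a Graham--Sloane type colouring. Colour $F$ by the vector $\bigl(\sum_{x\in F}x^j \bmod p\bigr)_{j=1}^{d}$ for a prime $p\in[n,2n]$. Two distinct equal-size sets with $|F\triangle G|\le 2d$ then receive different colours, so $\chi_i=O(n^{d})$. To get the right exponent, I would replace this by a colouring adapted to $D_i$. Concretely, let $m_i$ be the largest $s$ such that there is a clique, or a ``sunflower-type'' configuration, witnessing $\Omega(n^{s})$ pairwise-adjacent sets. Then colour only enough power sums to separate the distances that are actually forbidden, in the spirit of Frankl--F\"uredi's theorem on families with prescribed intersection sizes.

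\textbf{Step 3: matching lower bound, the main obstacle.} We need $\chi_i=\Omega(n^{m_i})$ with the same integer $m_i$. The plan is to bound the independence ratio: an independent set is a code in $\binom{[n]}{\ell_i}$ avoiding all distances in $D_i$.

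First I would localize, via an averaging argument, to a sub-cube: fix all but $O(1)$ coordinates blocks, reducing to a bounded-uniformity problem on a growing ground set. There, Frankl--F\"uredi (Füredi's delta-system method) says that the largest family of $r$-sets avoiding a fixed set of intersection sizes has size $\Theta(n^{s})$ for an integer $s$. Averaging this over the localization transfers the bound to the independence ratio, giving $\alpha_i/\binom{n}{\ell_i}=O(n^{-m_i})$.

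The delicate point is making the integer exponent from the localized problem coincide with the one achieved by the upper-bound colouring. I expect this to require defining $m_{P,k}$ directly as the delta-system exponent of the forbidden-distance set $\bigcup_i D_i$, and checking that the power-sum colouring can be restricted to realize exactly that exponent. The strong-copy case is identical with $\mathcal T^*$ in place of $\mathcal T$, which yields $m^*_{P,k}$.
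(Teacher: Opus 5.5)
Your Steps 1 and 2 are fine and match the paper. Adjacency within a layer depends only on $|F\setminus G|$, these distances are bounded because $P$ is connected, and $\chi$ is within a factor $k$ of $\max_i\chi_i$.

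The genuine gap is the one you flag yourself in Step 3. You never pin down the exponent, so the upper and lower bounds are not shown to meet, and the route you propose for closing the gap does not work as stated. Defining $m_i$ via cliques is not justified, because in a generalized Johnson graph the clique number can be far below $\chi$. For example, if the only edge distance were $m\ge 2$, a clique would have all pairwise intersections of the same size, hence at most $n$ members by Fisher's inequality, while $\chi$ is of order $n^m$ by the argument below. Your ``delta-system exponent of the whole forbidden set'' relies on an integrality statement for arbitrary sets of forbidden intersection sizes. Frankl--F\"uredi does not give you that, since their theorem concerns a single forbidden intersection size. Nor is there any way, or any need, to ``restrict'' the power-sum colouring to the forbidden distances.

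The missing observation is that only $m_i:=\max D_i$ matters, with $m_i=0$ if $D_i=\emptyset$ and $m_{P,k}=\max_i m_i$.

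\emph{Upper bound.} Colour an $\ell_i$-set by its first $m_i$ power sums modulo a prime $p\in[n,2n]$ (Graham--Sloane). This separates every pair with $1\le|F\setminus G|\le m_i$, hence every edge of the layer, using $O(n^{m_i})$ colours. Separating non-edges as well costs nothing.

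\emph{Lower bound.} Use only the single forbidden distance $m_i$. Your localization idea is right in spirit, but it must be done at a fixed uniformity $\ell>2m_i$. For an independent set $\cF$ in layer $i$ and any $B$ with $|B|=\ell_i-\ell$, the link $\cF_B=\{F\setminus B: B\subseteq F\in\cF\}$ is $\ell$-uniform. No two of its members meet in exactly $\ell-m_i$ elements. So Frankl--F\"uredi gives $|\cF_B|=O(n^{\max\{\ell-m_i,\,m_i-1\}})=O(n^{\ell-m_i})$, and this is exactly where $\ell>2m_i$ is used. Double counting pairs $(B,F)$ yields $|\cF|\binom{\ell_i}{\ell}\le\binom{n}{\ell_i-\ell}\,O(n^{\ell-m_i})$, that is, $|\cF|=O\bigl(n^{-m_i}\binom{n}{\ell_i}\bigr)$, so $\chi_i=\Omega(n^{m_i})$.

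Forbidding further distances only shrinks independent sets, so the two bounds meet automatically. No finer analysis of the structure of $D_i$, and no general delta-system exponent, is required.
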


\begin{conjecture}\label{conj}
   For any tree poset $T\neq C_k$, integer $k\ge h(T)$, and positive real $\varepsilon>0$, there exists a constant $\delta>0$ such that $\arw(n,(k-1+\varepsilon)\binom{n}{\lfloor \frac{n}{2}\rfloor},T)\ge \arm(n,(k-1+\varepsilon)\binom{n}{\lfloor \frac{n}{2}\rfloor},T)\ge \delta n^{m_{T,k}}$.
\end{conjecture}

We will prove some special cases of Conjecture \ref{conj}. We say that a tree poset $T$ is \textit{upward monotone} if it has a smallest element, and $T$ is \textit{downward monotone} if it has a largest element.  $T$ is \textit{two-way monotone} if it contains an element $x$ that is comparable to all elements $y\in T$. If $T$ is two-way monotone, then the elements comparable to all elements form a (possibly one element) chain $x_1\prec x_2\prec \dots \prec x_k$ such that $T=D_T(x_1)\cup U_T(x_k)\cup \{x_j:1\le j \le k\}$ and $D_T(x_1)$ is downward monotone and $U_T(x_k)$ is upward monotone. An upward monotone tree $T$ is \textit{branching at level $i$} if there exist two distinct leaves $x,y$ such $\min\{d(r,z): z$ is on the path between $x$ and $y$ in $H(T)\}=i$, where $r$ is the smallest element of $T$. An upward monotone tree is \textit{branching} if whenever it branches at level $i<h(T)-1$, then it branches at level $i+1$. A downward monotone tree poset $T$ is branching if its dual is branching. A two-way monotone tree poset $T$ is branching if for the chain $x_1\prec x_2\prec \dots \prec x_k$ of elements comparable to all other elements of $T$, we have that out of $D_T(x_1)$ and $U_T(x_k)$ the one with non-smaller height is branching (if these are of the same height, then at least one of them is branching).

\bigskip

\begin{figure}
    \begin{tikzpicture}
      \node[draw, circle] (0) at (6,-2) {};
    \node[draw, circle] (a1) at (7,0) {};
    \node[draw, circle] (a2) at (5,0) {};
    \node[draw, circle] (b2) at (4,2) {};
    \node[draw, circle] (b1) at (8,2) {};
    \node (T) at (8,-2) {$T$};

    \node[draw, circle] (0') at (12,-2) {};
    \node[draw, circle] (a1') at (13,0) {};
    \node[draw, circle] (a2') at (11,0) {};
    \node[draw, circle] (b2') at (10,2) {};
    \node[draw, circle] (b1') at (14,2) {};
    \node[draw, circle] (b3') at (12,2) {};
    \node (T') at (14,-2) {$T'$};

    \node[draw, circle] (0'') at (18,0) {$x$};
    \node[draw, circle] (a1'') at (19,1) {};
    \node[draw, circle] (a2'') at (17,1) {};
    \node[draw, circle] (b2'') at (16,2) {};
    \node[draw, circle] (b1'') at (20,2) {};
    \node (T'') at (21,-2) {$T''$};

    \node[draw, circle] (a1''') at (17,-1) {};
    \node[draw, circle] (a2''') at (19,-1) {};
    \node[draw, circle] (b2''') at (16,-2) {};
    \node[draw, circle] (b1''') at (18,-2) {};
    \node[draw, circle] (b3''') at (20,-2) {};
    
\draw (0) -- (a1);
\draw (0) -- (a2);
\draw (b1) -- (a1);
\draw (b2) -- (a2);

\draw (0') -- (a1');
\draw (0') -- (a2');
\draw (b1') -- (a1');
\draw (b2') -- (a2');
\draw (b3') -- (a1');

\draw (0'') -- (a1'');
\draw (0'') -- (a2'');
\draw (b1'') -- (a1'');
\draw (b2'') -- (a2'');

\draw (0'') -- (a1''');
\draw (0'') -- (a2''');
\draw (b1''') -- (a1''');
\draw (b3''') -- (a2''');
\draw (b2''') -- (a1''');
\end{tikzpicture}
\caption{$T$ is not branching as it branches at level 0, but not at level 1. $T'$ is branching. $T''$ is also branching as $D_{T''}(x)$ and $U_{T''}(x)$ both have height 2 and the former is branching.}
\end{figure}

\begin{thm}\label{middletree}
    \begin{enumerate}
        \item 
     For any two-way monotone, branching tree poset $T$, integer $k\ge h(T)$, and positive real $\varepsilon>0$, there exists a constant $\delta>0$ such that $\arw(n,(k-1+\varepsilon)\binom{n}{\lfloor \frac{n}{2}\rfloor},T)\ge \delta n^{m_{T,k}}$.
     \item 
     For any tree poset of height 2,  and positive real $\varepsilon>0$, there exists a constant $\delta>0$ such that $\arw(n,(1+\varepsilon)\binom{n}{\lfloor \frac{n}{2}\rfloor},T)\ge \delta n^{m_{T,2}}$. 
     \end{enumerate}
\end{thm}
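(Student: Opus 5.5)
Throughout, $\cF\subseteq 2^{[n]}$ has size $m=(k-1+\varepsilon)\binom{n}{\lfloor n/2\rfloor}$. By Proposition \ref{chromatic} it suffices to show $\chi(G_T(\cF))\ge \delta n^{m_{T,k}}$ for every such $\cF$. We use the bound $\chi\ge |\cF'|/\alpha$ on a suitable subfamily $\cF'\subseteq\cF$. Since $m_{T,k}$ is defined through $\cM_{n,k}$ by Proposition \ref{exponent}, the first step is to identify the structure in $\cM_{n,k}$ that forces $n^{m_{T,k}}$ colours. For a two-way monotone $T$, two sets $F\subset G$ in $\cM_{n,k}$ with $|G\setminus F|=j$ can lie in a common weak copy of $T$ only if there is room for the rest of $T$. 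This depends only on the layers and on $j$. A colour class of $G_T(\cM_{n,k})$ can therefore be identified with a family in which every comparable pair has large difference, or in which every pair sits at the "wrong" levels. The exponent $m_{T,k}$ should be the number of layers of the longer branching half, minus the height used up by that half. The lower bound in $\cM_{n,k}$ is then a Kleitman/LYM-type count: a colour class within the middle $k$ layers has size $O(\binom{n}{n/2}/n^{m_{T,k}})$, because it behaves like a code with minimum distance about $m_{T,k}$.

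The second step transfers this to an arbitrary $\cF$ of size $(k-1+\varepsilon)\binom{n}{n/2}$. Using the chain-partition (Lubell function) argument behind Bukh's theorem, a positive fraction of the symmetric chains, or of the maximal chains, meet $\cF$ in at least $k$ sets. A weighted averaging over maximal chains then produces many "$k$-chains" $F_1\subset\dots\subset F_k$ in $\cF$. We may assume these lie in $\cM_n$, since sets far from the middle contribute $o(\binom{n}{n/2})$ after Lubell weighting. The branching hypothesis is used here. Along such a chain, the branching half of $T$ can be embedded with $F_i,F_j$ in roles determined by the chain positions. The other half is extended outside the chain inside the full cube; this is legitimate because we only need a copy inside $\cF$ for the pair, and the extra sets must also come from $\cF$. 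The branching condition makes the extension greedy: at each level the branching forces two children, and we need $\cF$ to contain two distinct supersets of the right kind at the next level. A supersaturation count (each set in $\cF$ at a level has many $\cF$-successors on average) supplies them.

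The third step bounds the size of an independent set $\cI$ in $G_T(\cF)$. We show that $\cI$ restricted to the chains found above has the same "distance" structure as in $\cM_{n,k}$. Double counting over maximal chains then gives $|\cI|\le C\binom{n}{n/2}/n^{m_{T,k}}$, while the weighted size of $\cF$ is at least $\varepsilon\binom{n}{n/2}$. Hence $\chi\ge \delta n^{m_{T,k}}$. For part (2), height-2 trees, the structure is simpler. The pairs are minimal–maximal elements, and the relevant configurations are a set with many supersets and subsets in $\cF$. The same counting applies with $k=2$: since $|\cF|>(1+\varepsilon)\binom{n}{n/2}$, a positive proportion of the sets have many comparable neighbours, and stars of $T$ can be grown greedily.

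The main obstacle will be the second step. Guaranteeing that the remaining branches of $T$ can be realized inside $\cF$, rather than in $2^{[n]}$, requires a robust supersaturation statement: most sets of $\cF$ in the chains have at least $|T|$ neighbours in $\cF$ on the appropriate side. I would first try to delete, iteratively, sets with fewer than $|T|$ up- or down-neighbours in $\cF$ at distance $1$. Deleting them loses only $O(\binom{n}{n/2}/n)$ sets per layer, which keeps the $\varepsilon$ surplus intact.
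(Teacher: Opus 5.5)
Your proposal has a genuine gap. The whole lower bound rests on your third step: the claim that an independent set of $G_T(\cF)$, inside some subfamily of size $\ge\varepsilon\binom{n}{\lfloor n/2\rfloor}$, has size $O(\binom{n}{\lfloor n/2\rfloor}/n^{m_{T,k}})$. This is a fractional-type statement, stronger than what is needed. The only mechanism you give for it is that $\cF$ ``has the same distance structure as $\cM_{n,k}$''.

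An arbitrary $\cF$ does not have that structure:
\begin{itemize}
\item Its comparable pairs may all be at distance $\ge 2$ (take every second layer near the middle), or at some large distance $b$.
\item Large parts of $\cF$ can be independent. For the tree consisting of a root below two leaves, take the middle layer together with all sets of the layer below that contain a fixed $t$-set. The lower sets are pairwise non-adjacent, and the middle sets outside their up-shadow are isolated, so $\alpha(G_T(\cF))\approx\binom{n}{\lfloor n/2\rfloor}$.
\end{itemize}
So choosing the right subfamily and the right double counting is the entire difficulty, and your proposal does not address it.

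Two further steps fail as stated. Your cleaning step is false: in the every-second-layer example, deleting sets with fewer than $|T|$ neighbours at distance $1$ deletes all of $\cF$, not $O(\binom{n}{\lfloor n/2\rfloor}/n)$ sets per layer. In step two you extend part of $T$ ``inside the full cube'', but that does not produce an edge of $G_T(\cF)$; the whole copy of $T$ must lie in $\cF$.

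The missing idea is to use $\chi\ge\omega$ and exhibit a large clique.
\begin{itemize}
\item Theorem \ref{blowupchain} shows that every $\cF$ of size $(k-1+\varepsilon)\binom{n}{\lfloor n/2\rfloor}$ contains a weak copy of the blow-up $C_k(x_{f(y)},\delta n)$, where $y$ is the largest element of $T$ comparable to all others.
\item That proof is where the robust supersaturation you were looking for actually lives. Lubell-mass peeling $\cF_h\supset\dots\supset\cF_1$ guarantees every set has a \emph{type}: a distance $a\le z$, or a large distance ``$+$'', at which it has many $\cF$-supersets. Children are distributed at random among their boundedly many possible parents, and a plurality-type thinning makes each level uniform.
\item Lemma \ref{blowuppembedtwoway} then uses the branching hypothesis: for any two top-level sets of the blow-up, it embeds a weak copy of $T$ inside the blow-up containing both.
\item Hence these $(\delta n)^{k-f(y)}$ top-level sets form a clique of $G_T(\cF)$. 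After dualizing so that $m_{T,k}=m_{T,k,k}$, Proposition \ref{maxexponent} gives $k-f(y)=m_{T,k}$.
\end{itemize}

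Part (2) is analogous, using a blow-up of a zigzag path $P_{2m+2}(a_0,\delta n)$ from Theorem \ref{blowupheight2}. That blow-up is built from a bipartite containment graph with large minimum degree on both sides, which lets one alternate up and down steps. The clique is the $(\delta n)^m$ sets at distance $m$ from the root. Note that the relevant pairs there are two minimal (or two maximal) elements at distance $2m_{T,2}$ in $H(T)$, not minimal--maximal pairs. Growing stars greedily would therefore only give exponent $1$.
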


Our main tool will be two embedding results that are of independent interest and partially confirm a conjecture of Patk\'os and Treglown. 

\begin{definition}
    Let $T$ be a tree poset and $x\in T$, and let $d\ge 2$ be a positive integer. The \textit{$d$-blow-up $T(x,d)$ rooted at $x$} is the tree poset whose Hasse diagram is defined as follows: for each $u \in T$, if $u$
    is at distance $\rho$ from $x$ in  the undirected Hasse diagram of $T$, then $u$ is replaced with $d^{\rho}$ elements $u^1,u^2,\dots, u^{d^{\rho}}$; furthermore, if $uv$ is an edge of the Hasse diagram of $T$ with $v$ being at distance $\rho-1$ from $x$ in $T$, then the $u^i$s are partitioned into $d^{\rho-1}$ pairwise disjoint sets $U^1,U^2,\dots, U^{d^{\rho-1}}$ each of size $d$, and for every $j \in [d^{\rho-1}]$,
    $v^j$ is joined to all members of $U^j$. The orientation of all such edges is the same as that of $uv$.
\end{definition}

Throughout the paper, we will write $T(x,\alpha n)$ instead of $T(x,\lfloor \alpha n\rfloor)$.

\begin{conjecture}[Conjecture 1.3 in \cite{PT}]\label{conjPT}
    Let $T$ be any tree poset of height $h$. Given any $\varepsilon>0$ there exists $\delta >0$ such that the following holds for any  large enough $n$ and any $x \in P$.
   Consider the $\delta n$-blow-up $T({x, \delta n})$ of $T$  rooted at $x$. If $\mathcal F \subseteq 2^{[n]}$ has size at least $ (h-1+\varepsilon ) \binom{n}{\lfloor n/2 \rfloor}$, then
    $\mathcal F$ contains a weak copy of $T({x, \delta n})$.
\end{conjecture}

We will prove special cases of Conjecture \ref{conjPT}. Let us remark that a weaker, but general result, the existence of a weak copy of $T(x,\log n)$ was proved in \cite{BGW}.

\begin{thm}\label{blowup}
\begin{enumerate}
    \item 
    For any two-way monotone (not necessarily branching) tree poset $T$ and $x\in T$ comparable to all elements of $T$ and $\varepsilon>0$, there exists $\delta >0$ such that for $n$ large enough any family $\cF\subseteq 2^{[n]}$ with $|\cF|\ge (h(T)-1+\varepsilon)\binom{n}{\lfloor \frac{n}{2}\rfloor}$ contains a weak copy $T(x,\delta n)$.
    \item 
    For any tree poset $T$ of height 2, $x\in T$ and $\varepsilon>0$, there exists $\delta >0$ such that for $n$ large enough any family $\cF\subseteq 2^{[n]}$ with $|\cF|\ge (1+\varepsilon)\binom{n}{\lfloor \frac{n}{2}\rfloor}$ contains a weak copy $T(x,\delta n)$.    
\end{enumerate}
\end{thm}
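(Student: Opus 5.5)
We need to prove Theorem \ref{blowup}. The plan is to build the blow-up greedily along a chain, using a ``fat chain'' or density argument of Lubell type.

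\textbf{Part (1).} Let $T$ be two-way monotone with $x$ comparable to all elements, so $T=D_T[x]\cup U_T[x]$. Here $D_T[x]$ is a downward monotone tree with largest element $x$, and $U_T[x]$ is an upward monotone tree with smallest element $x$. In $T(x,\delta n)$ the root $x$ has a single copy. The up-part is then an upward-branching tree of bounded depth with branching $\delta n$, and the down-part is the dual. Hence it suffices to find one set $F\in\cF$ such that:
\begin{itemize}
\item $\cF\cap\{G:G\supseteq F\}$ contains a weak copy of the up-blow-up;
\item $\cF\cap\{G:G\subseteq F\}$ contains a weak copy of the down-blow-up.
\end{itemize}

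By the standard reduction (discard sets of size far from $n/2$, at a cost of $o(\binom{n}{n/2})$), assume $\cF\subseteq\cM_n$. Apply the Lubell function over full chains: the average number of $\cF$-sets on a random maximal chain is at least $h(T)-1+\varepsilon'$. Call $F\in\cF$ \emph{up-good of depth $a$} if, greedily, $F$ has at least $\delta n$ supersets in $\cF$ among its ``near'' supersets that are themselves up-good of depth $a-1$, with pairwise distinctness automatic because sets are distinct. Note that weak copies allow sets to be reused only if distinct. The needed branching is $\delta n$ per node, with disjointness of the images across different branches. Since the images must be distinct sets, I would ensure this by choosing children from shells $|G|=|F|+j$ for a fixed small $j$ depending on depth. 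Distinct parents in the same layer could share children, so pick children greedily, avoiding previously used sets. This costs only $O((\delta n)^{h})$ sets, while each parent has $\Omega(n)$ candidates.

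The core lemma is that if $F$ is not up-good of depth $a$, then along a random maximal chain through $F$ the expected number of $\cF$-sets above $F$ within distance $n^{2/3}$ is at most about $a-1+O(\delta)$. This holds because fewer than $\delta n$ of the $n-|F|$ one-step extensions (or their iterates) lead to good sets, so a random chain passes through a good set with probability $O(\delta)$. By induction on $a$, a chain-counting double count then shows the following. If no $F$ is simultaneously up-good of depth $h_U$ and down-good of depth $h_D$, with $h_U+h_D+1=h(T)$, then the Lubell mass is at most $h(T)-1+O(\delta)$, contradicting $\varepsilon$ for small $\delta$. The main obstacle is this induction. It requires controlling, on a random chain, the number of $\cF$-points of each ``goodness depth.'' It also requires handling upward and downward goodness together, which I would do with a weighting argument on chains split at their $\cF$-points.

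\textbf{Part (2).} For height-2 trees with arbitrary $x$, the blow-up is a bipartite ``alternating'' tree with bounded depth and branching $\delta n$. I would iterate a cleaning procedure:
\begin{itemize}
\item Restrict to $\cF$ and the comparability bipartite graph between $\cF$-sets that are minimal and those above them.
\item Since $|\cF|\ge(1+\varepsilon)\binom{n}{n/2}$, chain counting shows that $\Omega(\varepsilon\binom{n}{n/2})$ comparable pairs at distance $O(1)$ exist per set on average.
\item Repeatedly delete sets with fewer than $\delta n$ comparable neighbours of the required type. Each deletion round removes only a small fraction, using the LYM-type bound that a set family with all up-degrees below $\delta n$ has Lubell mass at most $1+O(\delta)$.
\end{itemize}
The resulting nonempty subfamily has minimum degree $\delta n$ in both directions, and the bounded-depth tree embeds greedily, avoiding used sets.
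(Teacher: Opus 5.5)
\textbf{There is a genuine gap, and it is in the step you treat as routine.}

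\emph{The density part is fine.} The part you flag as the main obstacle is routine. Argue by induction on $a$: condition on the first member of $\mathcal{F}$ above $F$ on the chain that is not up-good of depth $a-1$. Above it the chain is again uniform, so your per-set bound $a-1+O(\delta)$ follows. Hence the sets that are not up-good of depth $h_U$ have Lubell mass at most $h_U+O(\delta)$, and those not down-good of depth $h_D$ have mass at most $h_D+O(\delta)$. A plain union bound then gives the desired $F$; this is the paper's min-partition argument and its reduction to the extreme elements of the chain.

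\emph{The embedding step fails.} Turning a good $F$ into a weak copy of the blow-up does not work as you describe.
\begin{enumerate}
\item Your count does not close. The deepest level has $(\delta n)^{h_U}$ sets, while a parent has only $\Omega(n)$ candidates. ``Avoid previously used sets'' needs a bound on how many used sets lie among one parent's candidates, and you give none.
\item With an absolute threshold of $\delta n$ good supersets at arbitrary distance, goodness does not imply the blow-up at all. Put $G_i=F\cup\{e_i\}$ for $i\le\delta n$ and split the $e_i$ into blocks $E_b$ of size $\sqrt n$. Add $H_{b,y}=F\cup E_b\cup\{y\}$ for $y$ in a set $Y$ of size $\delta n$. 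Each $G_i$ lies below $\delta n$ of the $H_{b,y}$, so $F$ is up-good of depth $2$. But if no other member of $\mathcal{F}$ contains $F$, only $\delta n+\delta^2 n^{3/2}=o(n^2)$ sets lie above $F$. That is too few for a weak copy of $C_3(x_{\min},\delta' n)$ rooted at $F$.
\item Restricting goodness to a bounded shell $|G|=|F|+j$, as you suggest, breaks the density lemma instead. Take the middle layer together with the layer of size $\lfloor n/2\rfloor+\lfloor\sqrt n\rfloor$. It has size $(1+e^{-2}+o(1))\binom{n}{\lfloor n/2\rfloor}$, yet it has no comparable pairs at bounded distance, so no set is good.
\end{enumerate}

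\emph{What the paper does instead.} The missing ingredient is the machinery of the paper's proof of Theorem \ref{blowupchain}.
\begin{enumerate}
\item Thresholds are placed on Lubell mass, through the nested families $\mathcal{F}_h\supset\dots\supset\mathcal{F}_1$. This forces each relevant set to be of one of two kinds. Type $a$: it has $\frac{\varepsilon}{3hz}\binom{n-|F|}{a}$ supersets in the next family at some bounded distance $a\le z=h5^h$. Type $+$: it has $\Omega(n^{z-2})$ supersets at some distance $b>z$.
\item While only bounded types occur, every embedded set is within distance $hz$ of the root. So a candidate has at most $2^{hz}$ possible parents, and children are distributed by a random assignment (Hall's theorem would also work).
\item At a type-$+$ level there are polynomially more candidates than embedded sets, so greedy works. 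However, the bounded-parent property is lost for later levels. This is why the paper maintains the separation invariant $|F\triangle F'|>h5^{h-j}$ for same-size descendants of distinct type-$+$ ancestors.
\item The tree is thinned by plurality labels so that each level uses a single type.
\end{enumerate}

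\emph{Part (2) has the same hole, plus two further problems.}
\begin{enumerate}
\item ``Minimum degree $\delta n$, then embed greedily'' fails for the same reason as above.
\item Your deletion step is wrong as stated. A family of Lubell mass $1+O(\delta)$ can have about $\binom{n}{\lfloor n/2\rfloor}$ members, i.e.\ almost all of $\mathcal{F}$, so a round need not remove a small fraction.
\item Comparable pairs need not occur at bounded distance (same example as above).
\end{enumerate}
The paper splits into bounded and unbounded distance. In the bounded case it takes a minimum-degree subgraph of the containment graph $B_j$ at one fixed distance $j$. It keeps the colour sets $F_1\triangle F_1'$ disjoint along paths to the root, so every set has at most $2^{(k-1)j}$ possible parents, and then assigns parents at random. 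In the unbounded case it uses that degrees $\Omega(n^{k+1/3})$ dwarf the $O(n^{k-1})$ embedded sets.
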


\section{The case of $m=2^n$}

In this section, we prove Theorem \ref{weakm=2n}, Theorem \ref{strongm=2n}, and Theorem \ref{dichotomy}. In the next subsection, we gather some lemmas about embeddings of posets, then in the second subsection we prove the above theorems.

\subsection{Embeddings of posets}

For a family $\cF\subseteq 2^{[n]}$ of sets with $\cF=\{F_1,F_2,\dots,F_{r}\}$, its $2^{r}$ \textit{atoms} are the sets $A_1,A_2,\dots,A_{2^r}$ of the form
\[
A_i=F_1^{i_1}\cap F_2^{i_2}\cap \dots \cap F_r^{i_r},
\]
where $i_j$ is either 0 or 1 and $F^1=F$ and $F^0=[n]\setminus F$. Clearly, atoms are pairwise disjoint and $F_i=\cup_{A_j\subseteq F_i}A_j$ for all $1\le i\le r$.

\begin{prop}\label{atoms}
    Suppose the atoms of $\{F_1,F_2,\dots,F_r\}$ are $A_1,A_2,\dots,A_{2^r}$. Let $B_1,B_2,\dots,B_{2^r}$ be pairwise disjoint sets with the property that for all $1\le i \le 2^r$ it holds that $A_i$ is empty if and only if $B_i$ is empty. Then the sets $G_i:=\cup_{A_j\subseteq F_i} B_j$ have the same poset structure under inclusion as the $F_i$.
\end{prop}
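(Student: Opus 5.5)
The plan is to reduce every inclusion question about the sets $G_i$ to the corresponding question about the $F_i$, using the index set $N(i):=\{j: A_j\neq\emptyset,\ A_j\subseteq F_i\}$. Throughout, write $S(i):=\{j: A_j\subseteq F_i\}$. I read the statement as: $G_i\subseteq G_{i'}$ holds if and only if $F_i\subseteq F_{i'}$. Then $i\mapsto G_i$ is an isomorphism of the inclusion posets, in the strong sense, and it automatically preserves equalities, so distinct $F_i$ give distinct $G_i$.

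The first step is to show that $F_i$ is determined by $N(i)$. Each atom is either contained in $F_i$ or disjoint from it. Indeed, an atom $A_j=F_1^{j_1}\cap\dots\cap F_r^{j_r}$ lies in $F_i$ when $j_i=1$ and lies in $[n]\setminus F_i$ when $j_i=0$. Since the atoms partition $[n]$, we get $F_i=\bigcup_{j\in S(i)}A_j=\bigcup_{j\in N(i)}A_j$.

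The second step is the key lemma: for pairwise disjoint sets $C_j$ and index sets $I,J$,
\[
\bigcup_{j\in I}C_j\subseteq\bigcup_{j\in J}C_j \iff \{j\in I: C_j\neq\emptyset\}\subseteq\{j\in J: C_j\neq\emptyset\}.
\]
The reverse direction is immediate. For the forward direction, take $j\in I$ with $C_j\neq\emptyset$ and pick $x\in C_j$. Then $x$ lies in some $C_{j'}$ with $j'\in J$, and disjointness forces $j'=j$, so $j\in J$.

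The third step applies the lemma twice. With $C_j=A_j$, it gives $F_i\subseteq F_{i'}$ if and only if $N(i)\subseteq N(i')$. With $C_j=B_j$, and using that $B_j$ is empty exactly when $A_j$ is, so the nonempty indices among the $B_j$ are the same, it gives $G_i\subseteq G_{i'}$ if and only if $N(i)\subseteq N(i')$. Combining the two equivalences proves the claim.

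The only point that needs care is the disjointness used in the forward direction of the lemma. The hypothesis that the $B_j$ are pairwise disjoint is exactly what makes it go through, so there is no real obstacle. A byproduct worth noting for later use is that $|G_i\setminus G_{i'}|=\sum_{j\in S(i)\setminus S(i')}|B_j|$. This lets sizes be adjusted freely, presumably as the paper intends.
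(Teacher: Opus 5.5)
Your proof is correct and is essentially the paper's argument. The paper proves $F_j\subseteq F_i\Rightarrow G_j\subseteq G_i$ by inclusion of the atom index sets, and $F_j\not\subseteq F_i\Rightarrow G_j\not\subseteq G_i$ via a nonempty atom $A_h\subseteq F_j$ disjoint from $F_i$, whose nonempty partner $B_h$ lies in $G_j$ and misses $G_i$ by disjointness; your lemma on unions of pairwise disjoint sets packages exactly these two directions into one equivalence, and your index set $N(i)$ handles the empty atoms a bit more explicitly.
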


\begin{proof}
If $F_j\subseteq F_i$, then by definition we have $G_j=\cup_{A_h\subseteq F_j}B_h\subseteq \cup_{A_h\subseteq F_i}B_h=G_i$. 

On the other hand if $F_j\not\subseteq F_i$, then there exists a non-empty atom $A_h$ with $A_h\subseteq F_j$, $A_h\cap F_i=\emptyset$. Therefore, $B_h\subseteq G_j$, $B_j\not\subseteq G_i$ and so $B_h\subseteq G_j\not\subseteq G_i$.
\end{proof}

\begin{prop}\label{reconstruct}
   Suppose for some $j\le n$ there exists a strong copy $\cF\subseteq 2^{[n]}$ of $P$ with $F,F' \in \cF$, $F\subseteq F'$, and $|F'\setminus F|=j$. Then for any $n'>n_0(P)$ such that $n'/2-(n')^{2/3}\ge 2^{|P|}$ and $G\subseteq G'\subseteq [n']$ with $G,G'\in \cM_{n'}$ and $|G'\setminus G|\ge j$, there exists a strong copy $\cG$ of $P$ with $G,G'\in \cG$.
\end{prop}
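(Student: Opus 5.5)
The plan is to transplant the given copy atom by atom via Proposition~\ref{atoms}. Let $\cF=\{F_1,\dots,F_r\}$ with $r=|P|$, $F=F_1$, $F'=F_2$, and let $A_1,\dots,A_{2^r}$ be its atoms in $[n]$. Every atom contained in $F'\setminus F$ lies inside $F'$ and outside $F$, so these atoms partition $F'\setminus F$ and their sizes sum to $j$. The new sets $G=G_1$, $G'=G_2$ and the ambient set $[n']$ then split into three regions, $G$, $G'\setminus G$ and $[n']\setminus G'$. We will choose pairwise disjoint sets $B_i\subseteq[n']$, with $B_i$ nonempty exactly when $A_i$ is nonempty, and arrange them as follows. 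The atoms inside $F$ get $B_i$'s partitioning $G$. The atoms inside $F'\setminus F$ get $B_i$'s partitioning $G'\setminus G$. The atoms outside $F'$ get $B_i$'s contained in $[n']\setminus G'$. Proposition~\ref{atoms} then gives sets $G_\ell=\bigcup_{A_i\subseteq F_\ell}B_i$ forming a strong copy of $P$, and by construction $G_1=G$ and $G_2=G'$.

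The only issue is cardinality: each of the three regions must have at least as many elements as there are nonempty atoms assigned to it. This is needed so that every nonempty atom receives a nonempty $B_i$. In the region $G$, write $a$ for the number of nonempty atoms inside $F$. We have $a\le 2^r$, while $|G|\ge n'/2-(n')^{2/3}\ge 2^{|P|}$ because $G\in\cM_{n'}$, so $G$ can be split into $a$ nonempty parts. The region $[n']\setminus G'$ works the same way, since $|[n']\setminus G'|\ge n'/2-(n')^{2/3}$ also follows from $G'\in\cM_{n'}$. In the middle region $G'\setminus G$, the number of nonempty atoms inside $F'\setminus F$ is at most their total size $j$, and $|G'\setminus G|\ge j$, so a partition into that many nonempty pieces exists. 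The $B_i$ for the outer atoms do not need to cover all of $[n']\setminus G'$. Putting all leftover elements into one of the $B_i$ is harmless, because Proposition~\ref{atoms} only uses disjointness and the emptiness pattern.

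One degenerate case needs care. If no nonempty atom of $\cF$ lies inside $F$, that is $F=\emptyset$, then the region $G$ cannot be filled. We avoid this by first normalising the copy. We may assume $F\ne\emptyset$ and $F'\ne[n]$: if needed, add a new ground element to every set of $\cF$, and/or a new ground element to no set of $\cF$. This changes neither the strong copy structure nor $|F'\setminus F|$, though it increases $n$. The hypothesis $j\le n$ is not essential for the argument; the condition $n'>n_0(P)$ is there to make $n'/2-(n')^{2/3}\ge 2^{|P|}$ possible. The main, mild, obstacle is this bookkeeping: making sure each region receives enough nonempty blocks and that the normalisation keeps both $F\subseteq F'$ and $|F'\setminus F|=j$. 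Both are immediate from the construction.
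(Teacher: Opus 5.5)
Your proof is correct and follows the paper's argument. Both proofs transplant the given copy atom by atom via Proposition~\ref{atoms}, splitting $[n']$ into $G$, $G'\setminus G$ and $[n']\setminus G'$, and use the size bounds coming from $G,G'\in\cM_{n'}$ and $|G'\setminus G|\ge j$. Your normalisation step (adding a new ground element to every set and/or to none) fills a small gap in the paper's one-line proof: without it, the cases $F=\emptyset$ or $F'=[n]$ would leave $G$ or $[n']\setminus G'$ with no nonempty atoms to fill.
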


\begin{proof}
       There are $j'\le j$ non-empty  atoms of $\cF$ contained in $F'\setminus F$ and at most $2^{|P|}$ atoms  contained both in $F$ and in $[n]\setminus F'$. As $G,G'\in \cM_{n'}$, we have $|G|,n'-|G'|\ge n'/2-(n')^{2/3}\ge 2^{|P|}$. So we can partition $G'\setminus G$, $G$, and $[n']\setminus G'$ into as many pairwise disjoint sets $B_i$ as atoms in $F'\setminus F$, $F$, and $[n]\setminus F'$. Proposition \ref{atoms} finishes the proof.
    \end{proof}

\begin{prop}\label{wcomp}
    For any poset $P$ and a pair $F\subset F' \subset [n]$ with $2\le |P|\le n$, there exists a weak copy of $P$ in $2^{[n]}$ that contains both $F$ and $F'$. 
\end{prop}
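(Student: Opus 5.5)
The plan is to reduce everything to chains. A chain of sets is a weak copy of any poset of the same size once its elements are listed along a linear extension. So it suffices to find a chain of exactly $|P|$ distinct sets in $2^{[n]}$ that passes through both $F$ and $F'$.

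First, I will fix a linear extension of $P$, that is, an ordering $p_1,p_2,\dots,p_k$ of its elements ($k=|P|$) such that $p_i\prec p_j$ implies $i<j$. Such an ordering exists for every finite poset.

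Second, I will take a maximal chain $\emptyset=C_0\subsetneq C_1\subsetneq\dots\subsetneq C_n=[n]$ in $2^{[n]}$ with $|C_i|=i$ that passes through $F$ and $F'$. It is obtained by adding the elements of $F$ one at a time, then those of $F'\setminus F$, then those of $[n]\setminus F'$. This chain has $n+1\ge k+1$ distinct sets, and $F=C_{|F|}$ and $F'=C_{|F'|}$ are two distinct members of it because $F\subsetneq F'$. Since $k\ge 2$ and the chain has at least $k$ members, I can choose a subchain $D_1\subsetneq D_2\subsetneq\dots\subsetneq D_k$ of exactly $k$ sets that contains both $F$ and $F'$: include these two sets, then add any $k-2$ further members of the chain.

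Third, I will define $\iota(p_i)=D_i$. This is a bijection onto $\{D_1,\dots,D_k\}$, since the $D_i$ are distinct. If $p_i\preceq p_j$, then $i\le j$ by the choice of linear extension, hence $D_i\subseteq D_j$. So $\iota$ witnesses a weak copy of $P$ in $2^{[n]}$ containing $F$ and $F'$.

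There is no real obstacle here. The only points needing care are that the chain must have at least $k$ members, which is guaranteed by $|P|\le n$ (in fact $n+1$ members would suffice), and that $F$ and $F'$ must be distinct so they occupy two different positions of the chain.
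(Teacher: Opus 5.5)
Your proof is correct and follows the same approach as the paper. The paper observes that $C_{|P|}$ is a weak copy of $P$ and that $F\subset F'$ extends to a maximal chain; you spell this out by fixing a linear extension and selecting a $|P|$-element subchain through $F$ and $F'$.
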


  \begin{proof}
As $C_{|P|}$ is a weak copy of $P$ for any poset $P$ and any pair of sets in containment can be extended to a maximal chain, the statement of the proposition follows.
    
    
    



\end{proof}

\begin{prop}\label{intcoint}
    Suppose $P$ is a poset with $P\neq C_k$ and let $F,F'\subseteq [n]$ be an incomparable pair with $|P|<|F\cap F'|,|F\cup F'|<n-|P|$. Then there exists a strong copy of $P$ in $2^{[n]}$ containing $F,F'$.
\end{prop}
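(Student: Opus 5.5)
The plan is to build one explicit strong embedding of $P$ in which the two incomparable elements that will play $F$ and $F'$ have a very simple atom structure. We then transplant it into $[n]$ with Proposition \ref{atoms}, splitting $F\cap F'$, $F\setminus F'$, $F'\setminus F$ and $[n]\setminus(F\cup F')$ into blocks.

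\textbf{Step 1: choosing the pair.} Since $P\neq C_k$, $P$ has an incomparable pair. Among all elements lying in some incomparable pair, pick a minimal one $p$. Then pick $q$ minimal among the elements incomparable to $p$. We claim $D_P(p)=D_P(q)$.
\begin{itemize}
\item If $z\prec q$, then minimality of $q$ makes $z$ comparable to $p$. Now $p\prec z$ would give $p\prec q$, and $z=p$ is impossible, so $z\prec p$.
\item If $z\prec p$, then $z$ is comparable to every element, by the minimality of $p$. In particular $z$ is comparable to $q$, and $q\preceq z$ would give $q\prec p$, so $z\prec q$.
\end{itemize}

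\textbf{Step 2: the model embedding.} Let $N=|P|+2$ with ground set $P\cup\{a,b\}$, and set $\iota(x)=D_P[x]\cup\{a\}$. This is a strong embedding, because $x\preceq y$ if and only if $D_P[x]\subseteq D_P[y]$.

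Each $z\in P$ has its own membership pattern $U_P[z]$, so every nonempty atom of $\iota(P)$ is a singleton: $\{z\}$, $\{a\}$ or $\{b\}$.

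By Step 1, $\iota(p)\setminus\iota(q)=\{p\}$ and $\iota(q)\setminus\iota(p)=\{q\}$, and each is a single atom. The remaining atoms split as follows.
\begin{itemize}
\item Atoms inside $\iota(p)\cap\iota(q)$: these are $\{a\}$ together with the $\{z\}$ for $z\prec p$. That is at least one and at most $|P|-1$ atoms.
\item Atoms outside $\iota(p)\cup\iota(q)$: these are $\{b\}$ together with the $\{z\}$ for $z\not\preceq p,q$. That is at least one and at most $|P|-1$ atoms.
\end{itemize}

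\textbf{Step 3: transplanting into $[n]$.} We assign disjoint nonempty sets $B_i$ to the nonempty atoms and $B_i=\emptyset$ to the empty ones.
\begin{itemize}
\item $F\setminus F'$ goes entirely to the atom $\{p\}$, and $F'\setminus F$ goes entirely to $\{q\}$. Both are nonempty because $F,F'$ are incomparable.
\item $F\cap F'$ is partitioned into as many nonempty blocks as there are atoms inside $\iota(p)\cap\iota(q)$. This is possible since $|F\cap F'|>|P|$.
\item $[n]\setminus(F\cup F')$ is partitioned into as many nonempty blocks as there are atoms outside $\iota(p)\cup\iota(q)$. This is possible since $n-|F\cup F'|>|P|$.
\end{itemize}

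Proposition \ref{atoms} then gives sets $G_x=\bigcup_{A_j\subseteq\iota(x)}B_j$ forming a strong copy of $P$. By construction $G_p=(F\setminus F')\cup(F\cap F')=F$ and likewise $G_q=F'$, as required.

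\textbf{Main obstacle.} The delicate step is Step 1. If $\iota(p)\setminus\iota(q)$ contained several nonempty atoms, $F\setminus F'$ might be too small to split, since it could have size $1$. The minimal choice of the pair is what forces $D_P(p)=D_P(q)$, so that each difference is a single atom. The dummy elements $a$ and $b$ handle the other issue: they guarantee that the intersection region and the outside region each contain at least one nonempty atom, so the nonempty sets $F\cap F'$ and $[n]\setminus(F\cup F')$ always have an atom to absorb them.
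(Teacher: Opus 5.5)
Your proof is correct and is essentially the paper's argument: your minimality choice produces exactly two minimal elements of the part $Q$ sitting above the bottom chain $C_s$ in the paper's decomposition $P=Q\cup C_s$ (both have strict down-set $C_s$), and where the paper builds the inflated down-set embedding directly in $[n]$ using $\iota_0$, $A$, $A'$ and a permutation, you obtain it from Proposition \ref{atoms}. One harmless slip: if $P$ has a least element $z_0$, then $U_P[z_0]=P$ is also the membership pattern of $a$, so $\{a,z_0\}$ is a single atom rather than two singletons; this only lowers the number of atoms inside $\iota(p)\cap\iota(q)$, and the rest of your argument is unaffected.
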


\begin{proof}
Since  $P\neq C_k$, we can write
$P=Q\cup C_s,$
where $C_s=\{c_1\prec c_2\prec\cdots\prec c_s\}$ is a chain, possibly empty, every element of $C_s$ is smaller than every element of $Q$, and $Q$ has at least two minimal elements. Indeed, if $P$ has more than one minimal element, then we take $Q=P$ and $s=0$. Otherwise, remove the unique minimal element and put it into $C_s$. We repeat this process until the remaining poset has at least two minimal elements. Since $P$ is not a chain, this process stops before the remaining poset becomes empty.

Suppose $Q=\{q_1,\dots,q_t\}$, and set the canonical embedding $\iota_0:Q \rightarrow 2^{t}$ with $\iota_0(q_j)=\{i:q_i\preceq q_j\}$. Let $q_{1}$, $q_2$ be two distinct minimal elements of $Q$. Set $\iota_0(Q)=\bigcup_{q\in Q}\iota_0(q)$. Note that $|\iota_0(q_1)|=|\iota_0(q_2)|=1$ and $\iota_0(Q)= [t]$. Let $A, A'$ be two subsets of $[n]\backslash[t]$ such that $|A|=|F|-1$, $|A'|=|F'|-1$ and $|A\cap A'|=|F\cap F'|$. This can be done since $|F\cup F'|<n-|P|\leq n-t$.

Take a sequence of subsets $F_1\subset F_2\subset\cdots\subset F_s=A\cap A'$. This can be done since $|A\cap A'|=|F\cap F'|>|P|\geq s$. Now let $\iota: P\rightarrow 2^{[n]}$ be such that, for $c_i\in C_s$, $\iota(c_i)=F_i$, and for $q\in Q$,
$$\iota(q)=\left\{\begin{array}{ll}
    \iota_0(q)\cup(A\cap A'), & q_1\npreceq q, q_2\npreceq q; \\
    \iota_0(q)\cup A,         & q_1\preceq q, q_2\npreceq q; \\
    \iota_0(q)\cup A',        & q_1\npreceq q, q_2\preceq q; \\
    \iota_0(q)\cup(A\cup A'), & q_1\preceq q, q_2\preceq q.
\end{array}\right.$$ 
One can check that $\iota$ is a strong embedding of $P$ in $2^{[n]}$ such that $|\iota(q_1)|=|F|$, $|\iota(q_2)|=|F'|$ and $|\iota(q)\cap\iota(q')|=|F\cap F'|$. Now a permutation on $[n]$ gives a strong embedding of $P$ containing $F$ and $F'$.
\end{proof}

\begin{prop}\label{incom}
Suppose for some $i,j\le n$ there exists a strong copy $\cF\subseteq 2^{[n]}$ of $P$ with $F,F' \in \cF$, $F,F'$ incomparable, and $|F'\cap F|= i$, $n-|F'\cup F|= j$. Then for any $n'>n_0(P)$ such that $n'/2-(n')^{2/3}-|P|\ge 2^{|P|}$ and $G, G'\subseteq [n']$  incomparable with $G,G'\in \cM_{n'}$ and $|G'\cap G|\ge i$, $n'-|G'\cup G|\ge j$, there exists a strong copy $\cG$ of $P$ with $G,G'\in \cG$. 
\end{prop}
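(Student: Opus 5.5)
The plan is to imitate the proof of Proposition \ref{reconstruct}: view the given copy $\cF$ through its atoms and rebuild it inside $[n']$ using Proposition \ref{atoms}, so that $F$ and $F'$ are sent to $G$ and $G'$. The ground set $[n']$ splits into four regions determined by $G,G'$:
\begin{itemize}
\item $G\cap G'$, of size at least $i$;
\item $G\setminus G'$ and $G'\setminus G$, both non-empty since $G,G'$ are incomparable;
\item $[n']\setminus(G\cup G')$, of size at least $j$.
\end{itemize}
The ground set $[n]$ splits into the analogous four regions determined by $F,F'$. Every atom of $\cF$ lies in exactly one of these regions, because $F$ and $F'$ are members of $\cF$. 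We therefore aim to choose pairwise disjoint sets $B_h$ with three properties:
\begin{itemize}
\item each $B_h$ lies in the region of $[n']$ corresponding to the region of $A_h$;
\item $B_h\neq\emptyset$ exactly when $A_h\neq\emptyset$;
\item within each region the $B_h$ together exhaust the whole region.
\end{itemize}
If this succeeds, Proposition \ref{atoms} yields a strong copy $\{G_h\}$ of $P$ in which $F$ is sent to $\bigcup_{A_h\subseteq F}B_h$. That union is exactly the union of the $B_h$ over the regions $G\cap G'$ and $G\setminus G'$, i.e.\ it equals $G$. By the same argument $F'$ is sent to $G'$.

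It remains to check that each region of $[n']$ is large enough to be partitioned into as many non-empty parts as there are non-empty atoms of $\cF$ inside the corresponding region of $[n]$.

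\begin{itemize}
\item For $F\cap F'$: there are at most $|F\cap F'|=i$ non-empty atoms, since they are disjoint non-empty subsets of a set of size $i$, and $|G\cap G'|\ge i$.
\item For $[n]\setminus(F\cup F')$: there are at most $j$ non-empty atoms, and $n'-|G\cup G'|\ge j$.
\item For $F\setminus F'$ and $F'\setminus F$: the number of non-empty atoms is at most $2^{|P|}$, but we must have $|G\setminus G'|$ large. Here the hypothesis $G,G'\in\cM_{n'}$ is not enough by itself, because $G\setminus G'$ could be tiny, e.g.\ a single element.
\end{itemize}

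The third bullet is the main obstacle. I would handle it by modifying the original copy $\cF$ rather than the target sets. The first step is to reduce to the case where $F\setminus F'$ and $F'\setminus F$ each meet exactly one non-empty atom. To do this, merge all atoms inside $F\setminus F'$ into a single atom, and likewise inside $F'\setminus F$, while adding fresh singleton atoms elsewhere to keep the other comparabilities strict.

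A cleaner alternative is to rebuild $\cF$ directly. If $i\ge|P|+1$ and $j\ge|P|+1$, one can presumably appeal to Proposition \ref{intcoint} instead. In general, however, the small-$i,j$ cases need the atom argument.

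For the merging step I would argue as follows. Suppose $a\in F\setminus F'$ lies in atom $A_h$ and $b\in F\setminus F'$ lies in atom $A_{h'}$. Then every non-comparability $X\not\subseteq Y$ witnessed by $A_h$ or $A_{h'}$ can be preserved by enlarging the ground set by a few extra coordinates placed in $G\cap G'$ or in the complement region. The slack $n'/2-(n')^{2/3}-|P|\ge 2^{|P|}$ in the hypothesis is exactly what guarantees enough room for these extra coordinates in the big regions.

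After this reduction, each of $F\setminus F'$ and $F'\setminus F$ carries a single non-empty atom, which maps to the non-empty set $G\setminus G'$ (respectively $G'\setminus G$). The large regions $G\cap G'$ and $[n']\setminus(G\cup G')$ absorb the remaining atoms, including the new ones. We then conclude by Proposition \ref{atoms}.

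If the merging step fails, I would fall back on the case analysis used in the proof of Proposition \ref{intcoint}. That is, I would embed $P$ canonically so that the differences $\iota(p)\setminus\iota(q)$ and $\iota(q)\setminus\iota(p)$ are single atoms, and then check that this canonical embedding achieves intersection at most $i$ and co-union at most $j$.
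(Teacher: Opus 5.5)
Your treatment of the case $|G\setminus G'|,|G'\setminus G|\ge 2^{|P|}$ via Proposition \ref{atoms} is correct, and it is exactly the paper's first case. The gap is in the remaining case. You correctly identify it as the obstacle, but you do not resolve it.

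\emph{The merging step is only sketched.} To make $F\setminus F'$ a single atom, every other member of $\cF$ must contain all of that region or none of it. This can create or destroy comparabilities. The ``fresh coordinates'' you add to repair them increase $i$ and $j$, so you then need $G\cap G'$ and $[n']\setminus(G\cup G')$ to be large. You call these ``the big regions'' but never show that they are.

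\emph{Your fallback aims at the wrong target.} An embedding with single-atom differences, intersection at most $i$ and co-union at most $j$ need not exist. For instance, if $i=0$ and $p$ has two incomparable elements below it, then $\iota(p)\setminus\iota(q)=\iota(p)$ contains at least two atoms. You also make the use of Proposition \ref{intcoint} depend on $i,j\ge|P|+1$. Its hypotheses, however, concern only the target pair $G,G'$.

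The missing observation is that when $G\setminus G'$ is small, membership of $G,G'$ in $\cM_{n'}$ makes both other regions huge. If $|G\setminus G'|<2^{|P|}$, then
\[
|G\cap G'|=|G|-|G\setminus G'|>n'/2-(n')^{2/3}-2^{|P|}\ge |P|
\]
and
\[
n'-|G\cup G'|=n'-|G'|-|G\setminus G'|>n'/2-(n')^{2/3}-2^{|P|}\ge |P|.
\]
This is exactly what the hypothesis $n'/2-(n')^{2/3}-|P|\ge 2^{|P|}$ is for. Since $P$ has an incomparable pair, it is not a chain. So Proposition \ref{intcoint}, applied directly to $G,G'$, gives a strong copy of $P$ containing both.

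The statement does not require $G,G'$ to play the roles of $F,F'$, nor to respect $i$ and $j$. So no modification of $\cF$ is needed, and the case $|G'\setminus G|<2^{|P|}$ is symmetric.
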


\begin{proof}
      There are $i'\le i$ non-empty atoms of $\cF$ contained in $F\cap F'$ and $j'\le j$ non-empty atoms contained in $[n]\setminus (F\cup F')$ and at most $2^{|P|}$ atoms both in $F\setminus F'$ and in $F'\setminus F$. 

      If $|G\setminus G'|,|G'\setminus G|\ge 2^{|P|}$, then one can partition $G\cap G'$, $[n']\setminus (G'\cup G)$, $G'\setminus G$, $G\setminus G'$ into as many pairwise disjoint sets as the number of atoms in $F\cap F'$, $[n]\setminus (F'\cup F)$, $F'\setminus F$, $F\setminus F'$, respectively. Proposition \ref{atoms} yields the statement.

      If at least one of them is less than $2^{|P|}$, suppose that $G\backslash G'<2^{|P|}$. 
      As $G,G'\in \cM_{n'}$, we have $|G|\geq \frac{n'}{2}-(n')^{2/3}$ and $|G'|\le n'/2+(n')^{2/3}$. Therefore, $|G\cap G'|=|G|-|G\backslash G'|>\frac{n'}{2}-(n')^{2/3}-2^{|P|}>|P|$ and $n'-|G\cup G'|=n'-|G'|-|G\setminus G'|>\frac{n'}{2}-(n')^{2/3}-2^{|P|}>|P|$ hold. By Proposition \ref{intcoint}, there exists a copy $\cG$ of $P$ with $G,G'\in \cG$.
    \end{proof}

\begin{prop}\label{goodpair}
    Suppose $P$ is a poset that contains a strong good pair $p,p'$ of incomparable elements. Then for any incomparable $F,F'\subseteq [n]$ with $2|P|<|F|,|F'|<n-2|P|$, there exists a strong copy of $P$ in $2^{[n]}$ containing $F,F'$.
\end{prop}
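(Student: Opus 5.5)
The plan is to reduce to Proposition \ref{intcoint} when possible, and otherwise to build by hand a strong embedding of $P$ in which $p,p'$ play the roles of $F,F'$. Then relabel the ground set by a permutation, in the style of the proofs of Propositions \ref{reconstruct} and \ref{incom}. If $|F\cap F'|>|P|$ and $|F\cup F'|<n-|P|$, Proposition \ref{intcoint} already gives the copy, since $P\neq C_k$ because $P$ has an incomparable pair. So the remaining cases are: $|F\cap F'|\le |P|$, or $n-|F\cup F'|\le |P|$, or both. In these cases the size bounds $2|P|<|F|,|F'|<n-2|P|$ guarantee that the other regions are large. For instance, if $|F\cap F'|\le|P|$ then $|F\setminus F'|,|F'\setminus F|>|P|$.

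\emph{Base embedding.} Take ground set $P\sqcup \overline{P}$, where $\overline P$ is a disjoint copy of $P$, and set
$$\iota(q)=D_P[q]\ \cup\ \overline{(P\setminus U_P[q])}.$$
Both parts are monotone in $q$, and incomparability is preserved through the first part, so $\iota$ is a strong embedding. We then modify it using the good-pair hypothesis.
\begin{itemize}
\item On the lower side, $\iota(p)\cap\iota(p')\cap P=D_P(p)\cap D_P(p')$. Strong goodness says this is empty, or it is $\{0\}$ with $0$ the smallest element of $P$. In the latter case we replace the first part by $D_P[q]\setminus\{0\}$; this is still strong because $0$ lies in every set.
\item On the upper side we argue dually. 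The elements of $\overline P$ lying outside $\iota(p)\cup\iota(p')$ form $\overline{U_P(p)\cap U_P(p')}$, which has size $\le1$. If it equals $\{\overline 1\}$ with $1$ the largest element, we delete $\overline 1$ from the ground set.
\end{itemize}
The remaining task is to make the intersection and the outside small simultaneously. The two parts interfere: the $\overline P$-part contributes $\overline{P\setminus(U[p]\cup U[p'])}$ to the intersection, and the $P$-part contributes to the outside. Only the part that actually needs to be small is kept. If only $|F\cap F'|$ is small, use just the $D$-part, augmented below. If only the outside is small, use just the complement-of-$U$ part. If both are small, use the coordinate-wise combination but drop the interfering coordinates. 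A coordinate $\bar z$ with $z\notin U[p]\cup U[p']$ is needed only to separate pairs that $D$ already separates, so it can be removed. I expect this bookkeeping to be the main obstacle: checking that strongness survives while both $\iota(p)\cap\iota(p')$ and the complement of $\iota(p)\cup\iota(p')$ become empty.

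\emph{Padding to exact sizes.} Having an embedding with empty intersection and empty outside for $p,p'$, we match the exact quantities $|F\cap F'|$, $n-|F\cup F'|$, $|F\setminus F'|$ and $|F'\setminus F|$ by adding new ground elements, using the upward/downward-closure trick.
\begin{itemize}
\item A new point added to $\iota(q)$ for all $q\in U_P[p]\cup U_P[p']$ preserves order and strongness, since we add to an up-set. It lands in $\iota(p)\cap\iota(p')$, so any number of such points realises $|F\cap F'|$.
\item A new point added to $\iota(q)$ for all $q\notin D_P[p]\cup D_P[p']$ lands outside $\iota(p)\cup\iota(p')$.
\item Points added along the up-set $U_P[p]$, or along $U_P[p']$, enlarge $F\setminus F'$, respectively $F'\setminus F$.
\end{itemize}
The base embedding uses at most $2|P|$ points in each region, and the relevant regions of $F,F'$ have size $>|P|$ by the bounds above. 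Hence the sizes can be matched exactly: within each region we use Proposition \ref{atoms} to enlarge atoms, which keeps nonempty atoms nonempty. A permutation of $[n]$ then sends $\iota(p),\iota(p')$ to $F,F'$.
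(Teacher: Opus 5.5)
Your reduction via Proposition \ref{intcoint}, your two one-sided cases and your padding are sound. The maps $q\mapsto D_P[q]\setminus\{0\}$ and $q\mapsto\overline{P\setminus U_P[q]}$ are strong embeddings that make $\iota(p)\cap\iota(p')$, respectively the outside of $\iota(p)\cup\iota(p')$, empty. Adding a new point to every set of an up-set preserves strongness.

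The gap is the remaining case, where both $|F\cap F'|\le|P|$ and $n-|F\cup F'|\le|P|$. This case contains $F'=[n]\setminus F$, which is exactly what strong goodness is about, so you need an embedding with both quantities equal to zero. You justify dropping the coordinates $\bar z$ with $z\notin U_P[p]\cup U_P[p']$ because they only separate pairs the $D$-part already separates. That holds only while every $D$-coordinate is kept, but here you must also drop the $D$-coordinates $z\notin D_P[p]\cup D_P[p']$. Each half of the base embedding is strong on its own. After discarding both interfering families, however, the surviving coordinates need not separate elements lying outside $D_P[p]\cup D_P[p']\cup U_P[p]\cup U_P[p']$.

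For $P=A_4=\{p,p',q,q'\}$ the surviving coordinates are $p,p',\bar p,\bar p'$, and you get $\iota(q)=\iota(q')=\{\bar p,\bar p'\}$. None of your padding up-sets separates $q$ from $q'$: these are $U_P[p]\cup U_P[p']$, $P\setminus(D_P[p]\cup D_P[p'])$, $U_P[p]$ and $U_P[p']$. For $F'=[n]\setminus F$ only the last two are usable anyway.

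The missing idea is a private separator for each such middle element $u$ that lies inside $F\setminus F'$ or $F'\setminus F$. Add one new point to every set of the up-set $U_P[u]\cup U_P[p']$; it lands in $\iota(p')\setminus\iota(p)$. Add another new point to every set of $U_P[u]\cup U_P[p]$; it lands in $\iota(p)\setminus\iota(p')$. If $v\not\succeq u$, then $v$ is not above both $p$ and $p'$, because such an element would be the largest element of $P$, which lies above $u$. Hence one of the two points lies in $\iota(u)\setminus\iota(v)$.

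This is where the upper half of strong goodness is used for the middle elements. It is exactly what the markers $\alpha_u,\beta_u$ do in the paper's direct construction. With these markers you get a complementary-pair embedding whose two difference regions each have at most $|P|$ points, since they are indexed by disjoint parts of $P$. Your stated bound of $2|P|$ points per region would not fit regions known only to have more than $|P|$ elements.

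Padding then handles every case with $|F\setminus F'|,|F'\setminus F|\ge|P|$ at once, so your one-sided case split becomes unnecessary. The paper uses Proposition \ref{intcoint} only when $|F\setminus F'|<|P|$ or $|F'\setminus F|<|P|$.
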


\begin{proof}
 By the definition of a strong good pair,  $|D_P(p)\cap D_P(p')|$ and $|U_P(p)\cap U_P(p')|$ have size at most one. If it has size one, then this unique common lower (upper) element is the smallest (largest) element of $P$.

For most part of the proof, we will assume that
$D_P(p)\cap D_P(p')=\emptyset
\text{ and }U_P(p)\cap U_P(p')=\emptyset
$. 

If $|F\setminus F'|<|P| \text{ or } |F'\setminus F|<|P|,$ we have $|F\cap F'|>|P|$ and $|F\cup F'|<n-|P|$ by $2|P|<|F|,|F'|<n-2|P|$.
Since $P$ contains two incomparable elements, it is not a chain. By Proposition \ref{intcoint}, there exists a strong copy of $P$ containing $F$ and $F'$.

Therefore, we may assume that $|F\setminus F'|\ge |P| \text{ and } |F'\setminus F|\ge |P|.$

Choose distinct elements $a,a^+\in F\setminus F',\quad b,b^+\in F'\setminus F.$

Let $L_p=D_P(p), L_{p'}=D_P(p').$ We have $L_p\cap L_{p'}=\emptyset$. Choose distinct elements $z_u\in F\setminus F'\text{ for }u\in L_p,$
and distinct elements $z_u\in F'\setminus F\text{ for }u\in L_{p'}.$
Since $|F\setminus F'|\ge |P|,|F'\setminus F|\ge |P|$ and $|L_p|,|L_{p'}|\le |P|-2$, all these elements can be distinct and distinct from $a,a^+,b,b^+$.

Let $M=P\setminus\bigl(L_p\cup L_{p'}\cup\{p,p'\}\bigr).$ For every $u\in M$, choose two elements
$\alpha_u\in [n]\setminus F, \beta_u\in [n]\setminus F'.$ Since $|[n]\setminus F|>2|P|\text{ and }|[n]\setminus F'|>2|P|,$ all chosen elements are distinct and distinct from all previously chosen markers.

Now define the mapping $\iota:P\to 2^{[n]}.$
\begin{itemize}
    \item Set $\iota(p)=F,\iota(p')=F'.$
    \item For every $u\in L_p$, let $\iota(u)=\{a\}\cup \{z_v:v\in L_p,\ v\preceq u\}.$
    \item For every $u\in L_{p'}$, let $\iota(u)=\{b\}\cup \{z_v:v\in L_{p'},\ v\preceq u\}.$
    \item For every $u\in M$, let
$$\iota(u)=\left(\bigcup_{\substack{v\in L_p\cup L_{p'}\cup \{p,p'\}\\ v\preceq u}}\iota(v)\right)
\cup
\left(\bigcup_{\substack{v\in M\\ v\preceq u}}\{\alpha_v,\beta_v\}\right).
$$
\end{itemize}

We claim that $\iota$ is a strong embedding. First, if $u\preceq v,$ then the definition of $\iota$ immediately gives $\iota(u)\subseteq \iota(v).$
Indeed, the images of elements in $L_p$ and $L_{p'}$ are defined by their downsets inside $L_p$ and $L_{p'}$, respectively. The image of an element of $M$ contains the images of all lower elements from $L_p\cup L_{p'}$, contains the private markers of all lower elements from $M$, and contains $F$ or $F'$ precisely when the element lies above $p$ or $p'$.

It remains to prove that if $u\npreceq v,$ then we have $\iota(u)\nsubseteq \iota(v).$

\textsc{Case I} $u\in L_p$. 

If $v\in L_p$ and $u\npreceq v$, then
$z_u\in\iota(u),z_u\notin\iota(v),$ so $\iota(u)\nsubseteq\iota(v).$ If $v\in L_{p'}$ or $v=p'$, then $a\in\iota(u),a\notin\iota(v).$ Since $a\in F\setminus F'$, $\iota(u)\nsubseteq\iota(v).$ If $v\in M$ and $u\npreceq v$, then the marker $z_u$ does not appear in $\iota(v)$ unless $u\preceq v$. Also, if $p\preceq v$, then $u\preceq p\preceq v$, contrary to the assumption. Therefore, $z_u\in\iota(u),z_u\notin\iota(v).$ Moreover, $\iota(u)\nsubseteq\iota(v).$ The case $u\in L_{p'}$ is similar.

\smallskip

\textsc{Case II} $u=p$. 

If $v=p'$, then $a\in F=\iota(p), a\notin F'=\iota(p'),$
so $\iota(p)\nsubseteq\iota(p').$ If $v\in L_p\cup L_{p'}$, then $a^+\in F$ does not appear in $\iota(v)$, and therefore $\iota(p)\nsubseteq\iota(v).$ If $v\in M$ and $p\npreceq v$, then  $\iota(v)$ does not include $F$. Moreover, $a^+\notin F'$. Hence, $a^+\in\iota(p),a^+\notin\iota(v),$ which gives $\iota(p)\nsubseteq\iota(v).$
The case $u=p'$ is similar.

\smallskip

\textsc{Case III} $u\in M$. 

If $v=p$, then $\alpha_u\in\iota(u),\alpha_u\notin F=\iota(p),$ so $\iota(u)\nsubseteq\iota(p).$ If $v=p'$, then
$\beta_u\in\iota(u),\beta_u\notin F'=\iota(p'),$ so $\iota(u)\nsubseteq\iota(p').$
If $v\in L_p\cup L_{p'}$, then neither $\alpha_u$ nor $\beta_u$ appears in $\iota(v)$, and therefore
$\iota(u)\nsubseteq\iota(v).$ Finally, if $v\in M$ and $u\npreceq v$. If $p\preceq v$, then $p'\npreceq v$, because no element of $M$ is above both $p$ and $p'$. In this case, the element
$\alpha_u\in [n]\setminus F$ does not appear in $\iota(v)$ unless $u\preceq v$. Hence $\iota(u)\nsubseteq\iota(v).$ If $p'\preceq v$, then the same argument with $\beta_u\in [n]\setminus F'$ gives $\iota(u)\nsubseteq\iota(v).$ If neither $p\preceq v$ nor $p'\preceq v$, then $\alpha_u$ does not appear in $\iota(v)$ unless $u\preceq v$. Thus $\iota(u)\nsubseteq\iota(v).$

\smallskip

Therefore, for all elements $u,v$, we have
$u\preceq v\Longleftrightarrow\iota(u)\subseteq\iota(v)$. Moreover, for any $u\in P$, $\emptyset \neq \iota(u)\neq [n]$ holds. Indeed, the image of $\iota$ is nonempty and if $\iota(u_0)=[n]$, then $\iota(p)\subset\iota(u_0)$ and $\iota(p')\subset\iota(u_0)$ and $u_0\in U_P(p)\cap U_P(p')$ hold, which contradicts the assumption. 

The proof above applies if  $D_P(p)\cap D_P(p')=U_P(p)\cap U_P(p')= \emptyset$.
If $D_P(p)\cap D_P(p')$ or $U_P(p)\cap U_P(p')\neq \emptyset$, then we apply the above proof to $P_0=P\setminus\bigl((D_P(p)\cap D_P(p'))
    \cup(U_P(p)\cap U_P(p'))\bigr)$, and then we set the image of the at most one element in $D_P(p)\cap D_P(p')$ to be $\emptyset$ and the image of the at most one element in $U_P(p)\cap U_P(p')$ to be $[n]$.
We obtain a strong embedding of $P$ into $2^{[n]}$ whose image is a strong copy of $P$ containing both $F$ and $F'$.
\end{proof}

\subsection{Proofs of Theorems \ref{weakm=2n}, \ref{strongm=2n}, \ref{dichotomy}}

We start with an important consequence of Proposition \ref{intcoint}. Recall that $\cM_n=\{F\subseteq [n]: ||F|-n/2|\le n^{2/3}\}$.

\begin{prop}\label{independent}
    For any $P\neq C_k$, there exists $n_0=n_0(P)$ such that if $n\ge n_0$, then an independent set of $G^*_P[\cM_n]$ (and therefore that of $G_P[\cM_n]$) cannot contain an antichain of size 3. In particular, by Dilworth's theorem, an independent set is always the union of at most 2 chains.
\end{prop}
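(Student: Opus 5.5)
We need to show that among any three pairwise incomparable sets $F_1,F_2,F_3\in\cM_n$, some pair is joined by an edge of $G^*_P[\cM_n]$. By the definition of $G^*_P[\cM_n]$ as an induced subgraph of $G^*_P(n)$, this means finding a strong copy of $P$ in $2^{[n]}$ that contains both sets of that pair. The natural tool is Proposition \ref{intcoint}. It applies to an incomparable pair $F,F'$ as soon as
$$|P|<|F\cap F'| \quad\text{and}\quad |F\cup F'|<n-|P|.$$
So it suffices to show that some pair among $F_1,F_2,F_3$ satisfies both inequalities. Once that is done, Dilworth's theorem finishes the argument: an independent set with no antichain of size 3 is a union of at most 2 chains. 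The statement for $G_P[\cM_n]$ follows because $G^*_P[\cM_n]\subseteq G_P[\cM_n]$, so an independent set of $G_P[\cM_n]$ is also independent in $G^*_P[\cM_n]$.

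The key step is a counting (pigeonhole) argument. Suppose no pair satisfies both conditions. Then every pair $\{i,j\}$ is \emph{bad}, meaning either $|F_i\cap F_j|\le |P|$ (nearly disjoint) or $n-|F_i\cup F_j|\le |P|$ (nearly covering). There are three pairs, so at least two of them are bad in the same way; say both are nearly disjoint and share the set $F_1$ (any two pairs out of three share an element).

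Take the nearly disjoint case, with $|F_1\cap F_2|,|F_1\cap F_3|\le|P|$. Since $|F_1|\ge n/2-n^{2/3}$, the set $F_2$ has at most $|P|$ elements inside $F_1$. Hence $F_2\subseteq ([n]\setminus F_1)\cup S_2$ with $|S_2|\le|P|$, and likewise for $F_3$. The complement $[n]\setminus F_1$ has size at most $n/2+n^{2/3}$. So $F_2$ and $F_3$, each of size at least $n/2-n^{2/3}$, mostly live inside a set of size at most $n/2+n^{2/3}$. This forces
$$|F_2\cap F_3|\ge 2(n/2-n^{2/3}-|P|)-(n/2+n^{2/3}) \approx n/2 > |P|,$$
and also
$$|F_2\cup F_3|\le n/2+n^{2/3}+2|P|<n-|P|.$$
Thus the pair $F_2,F_3$ satisfies both conditions of Proposition \ref{intcoint}. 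They are incomparable by assumption, so they are adjacent, a contradiction. The nearly covering case is symmetric: either take complements, or run the dual computation with the roles of intersection and union swapped.

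The only real obstacle is making sure the parameters work for large $n$. We need $n^{2/3}$ and $|P|$ terms to be negligible against $n/2$, which determines $n_0(P)$. One should also check that the pair sharing $F_1$ is handled correctly. Here the pigeonhole just says two of the three pairs have the same bad type, and any two distinct pairs among three sets share exactly one set. That shared set plays the role of $F_1$, and the remaining pair is the one shown to be good.
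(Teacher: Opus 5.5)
Your proof is correct and follows essentially the same route as the paper: non-adjacency means one of the numerical conditions of Proposition \ref{intcoint} fails, and a counting argument inside $\cM_n$ then finishes. The paper avoids your pigeonhole on the failure type and the complement symmetry by noting that for $F,F'\in\cM_n$ the quantities $|F\cap F'|$ and $n-|F\cup F'|$ differ by at most $2n^{2/3}$, so either failure makes both $O(n^{2/3})$; it then bounds $|F''|\le |F\cap F''|+|F'\cap F''|+(n-|F\cup F'|)$ to get a contradiction.
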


\begin{proof}
    If the incomparable $F,F'\in \cM_n$ are not contained in a strong copy of $P$, then by Proposition \ref{intcoint}, $|F\cap F'|\le |P|$ or $n-|F\cup F'|\le |P|$ happens. As $F,F'\in \cM_n$, both conditions imply $|F\cap F'|\le |P|+2n^{2/3}$ and $n-|F\cup F'|\le |P|+2n^{2/3}$. 
    Suppose that an independent set $\mathcal{I} \subseteq \cM_n$ contains an antichain of size 3, saying $F, F', F''\in \mathcal{I}.$ We have $|F\cap F''|,|F'\cap F''|\le |P|+2n^{2/3}$ and $n-|F\cup F'|\le |P|+2n^{2/3}$. Every element of $F''$ either belongs to $F$, belongs to $F'$, or lies outside $F \cup F'$. Hence, $$|F''|\leq |F\cap F''|+|F'\cap F''|+n-|F\cup F'|\leq 3|P|+6n^{2/3}.$$ 
    On the other hand, since $F''\in \cM_n$, $|F''|\geq n/2-n^{2/3}$. For all sufficiently large $n$, $n/2-n^{2/3}>3|P|+6n^{2/3}$ which gives a contradiction.

\end{proof}
    
\begin{proof}[Proof of Theorem \ref{weakm=2n}]
If $P$ has a weak good incomparable pair, the upper bound is trivial. If all incomparable pairs are weak bad, then no weak copy of $P$ can contain both $F$ and $\overline F$ for any non-empty $F$.  Indeed, if there exists a copy containing $F,\overline F$, then because $F\cap\overline F=\emptyset$ and $F\cup\overline F=[n]$, there could be at most one common upper element and at most one common lower element, which  contradicts the definition of weak bad.  Thus each complementary pair may be given one color, with $\emptyset$ and $[n]$ colored separately, and so $\arw(n,2^n,P)\le \frac{1}{2}\cdot2^n+1$. This finishes the proofs of the upper bounds.

For the lower bounds, let us consider $G_P[\cM_n]$. By Proposition \ref{wcomp}, for any large enough $n$ and $F,F'\in\cM_n$ with $F\subset F'$ there exists a weak copy of $P$ containing $F,F'$, so a chain in an independent set of $G_P[\cM_n]$ can contain at most one set. By Proposition \ref{independent}, an independent set is the union of at most 2 chains. Hence $\alpha(G_P[\cM_n])\le 2$, and so 
$$
\operatorname{ar}_{\mathrm{m}}\left(n, 2^n, P\right)=\chi\left(G_P\left(2^{[n]}\right)\right) \geq \chi\left(G_P\left[\cM_n\right]\right) \geq \frac{\left|\cM_n\right|}{\alpha\left(G_P\left[\cM_n\right]\right)} \geq \frac{\left|\cM_n\right|}{2}=\left(\frac{1}{2}-o(1)\right) 2^n .
$$

This lower bound together with the upper bound establishes the asymptotics of $\arw(n,2^n,P)$ if $P$ is weak incomparable bad. When $P$ has a weak good incomparable pair $p,p'$, then if $p,p'$ is strong good incomparable pair by Proposition \ref{wcomp} and Proposition \ref{goodpair}, we have $G_P[\cM_n]$ is a clique. If $|U_P(p)\cap U_P(p')|=1$ and $P$ does not have a largest element, we can let $\iota(U_P(p)\cap U_P(p'))=[n]$ and if $|D_P(p)\cap D_P(p')|=1$ and $P$ does not have a smallest element, we can let $\iota(D_P(p)\cap D_P(p'))=\emptyset$. 
Similar to the proof of Proposition \ref{goodpair}, we can obtain that $\iota$ is a weak embedding. Hence we have $G_P[\cM_n]$ is a clique and so
$$\operatorname{ar}_{\mathrm{m}}\left(n, 2^n, P\right)=\chi\left(G_P\left(2^{[n]}\right)\right) \geq \chi\left(G_P\left[\cM_n\right]\right) \geq\left|\cM_n\right|=(1-o(1)) 2^n.$$
\end{proof}


As both proofs proceed by a case analysis whether $P$ is incomparable good or bad and if bad then lower, upper or mixed, we will prove Theorem \ref{strongm=2n} and Theorem \ref{dichotomy} simultaneously. Our constructions for covering $G_P^*[\cM_n]$ will use chain decompositions of $2^{[n]}$. A family $\mathbb{C}_n=\{\cC_1,\cC_2,\dots,\cC_m\}$ of pairwise disjoint chains with $\bigcup_{i=1}^m\cC_i=2^{[n]}$ is a \textit{symmetric chain decomposition} if for all $i$ there exists $r$ such that $\cC_i=\{C_r\subset C_{r+1}\subset \dots \subset C_{n-r}\}$ with $|C_j|=j$ for all $r\le j \le n-r$. It is well-known that symmetric chain decompositions of $2^{[n]}$ exist for all $n\ge 1$. Clearly, $m=\binom{n}{\lfloor n/2\rfloor}=o(2^n)$.

Given a symmetric chain decomposition $\mathbb{C}_n$ of $2^{[n]}$, we obtain a \textit{twisted chain decomposition} $\mathbb{C}_n^\perp$ of $2^{[n]}$ as follows: for a chain $\cC\in \mathbb{C}_n$ we write $\cC^-=\{C\in \cC: |C|\le n/2\}$ and set $\cC^\perp
=\{[n]\setminus C: C\in \cC, |C|<n/2\}$ (note that if $n$ is even and $\cC$ contains only one element, then $\cC^\perp$ is empty). Then $\mathbb{C}_n^\perp:=\{\cC^-,\cC^\perp:\cC\in \mathbb{C}_n\}$ is a chain decomposition of $2^{[n]}$. 

\begin{proof}[Proof of Theorem \ref{strongm=2n} and Theorem \ref{dichotomy}]
    The lower bound of Theorem \ref{strongm=2n} follows immediately from Proposition \ref{chromatic} and the induced subgraph on $\cM_n$:
$$
    \arm(n,2^n,P)
    =\chi(G_P^*(n))
    \ge \chi(G_P^*[\cM_n])
    \ge \frac{|\cM_n|}{\alpha(G_P^*[\cM_n])}
    =\left(\frac{1}{\alpha(G_P^*[\cM_n])}-o(1)\right)2^n.
$$

\textsc{Case I} $P$ is strong incomparable good.

\smallskip

Then by Proposition \ref{goodpair}, any incomparable $F,F'\in \cM_n$ are contained in a strong copy of $P$. Therefore, any independent set of $G_P^*[\cM_n]$ is a chain. By definition of $\sigma(P)$, any chain $F_0\subset F_1\subset \dots \subset F_{\sigma(P)-1}$ with $|F_j|=|F_0|+j$ forms an independent set in $G^*_P[\cM_n]$ so $\alpha(G^*_P[\cM_n])\ge \sigma(P)$. On the other hand, suppose $|F'\setminus F|=\sigma(P)$ with $F\subseteq F'$ such that there exists a strong copy of $P$ containing $F,F'$. Then by Proposition \ref{reconstruct}, if $n$ is large enough, for all $G,G'\in \cM_n$ with $G\subseteq G'$ and $|G'\setminus G|\ge \sigma(P)$, $G,G'$ are contained in a strong copy of $P$, so chains corresponding to independent sets
of $G^*_P[\cM_n]$ cannot have length more than $\sigma(P)$. This shows $\alpha(G_P^*[\cM_n])=\sigma(P)$. To color $G^*_P[\cM_n]$ consider a symmetric chain partition of $\mathbb{C}_n$ of $2^{[n]}$. We partition each chain into blocks of $\sigma(P)$ consecutive sets. By the above, these form independent sets, so we can color them using a color assigned to this block. On each chain, less than $\sigma(P)$ sets are uncovered by blocks: we color these sets one-by-one. So the total number of colors used is at most $\frac{1}{\sigma(P)}2^n+(\sigma(P)-1)\binom{n}{\lfloor n/2\rfloor}=(\frac{1}{\sigma(P)}+o(1))2^n$.

\medskip

\textsc{Case II} $P$ is incomparable upper bad or incomparable lower bad.

\smallskip

Then any $F,\bar{F}$ with $\{F,\bar{F}\}\neq \{\emptyset,[n]\}$ is a non-edge in $G_P^*(n)$ and using these pairs as color classes, we obtain $\chi(G_P^*(n))\le \frac{1}{2}2^n+1$ and $\alpha(G_P^*[\cM_n])\ge 2$. It remains to show $\alpha(G_P^*[\cM_n])\le 2$ which will follow from $\sigma(P)=1$ and Proposition \ref{independent}. Suppose $P$ is upper incomparable bad. Then $P$ contains a largest element, as if not, then two maximal elements $p,q$ are incomparable, therefore strong upper bad, so $U_P(p)\cap U_P(q)\neq \emptyset$ which contradicts the maximality of $p$ and $q$. Let $1$ be the largest element of $P$. Then $P'=P\setminus \{1\}$ should also have a largest element as otherwise $p,q$ maximal elements of $P'$ would be strong upper bad in $P$. But $P$ does have a largest element, so $|U_P(p)\cap U_P(q)|\ge 2$ contradicts the maximality of $p,q$ in $P'$. Let $1'\in P'$ be the largest element and let $P''=P\setminus\{1,1'\}$. Then for any $F'\subseteq F$ with $|F|=|F'|+1$, $|F'|\ge |P|-1$, we can define an embedding $\iota$ by picking distinct elements $x_1,x_2,\dots,x_{|P|-2}\in F'$
and for $p_1,p_2,\dots, p_{|P|-2}\in P''$ and setting $\iota(p_i)=\{x_j: p_j\preceq p_i\}$, $\iota(1')=F'$, and $\iota(1)=F$. This shows $\sigma(P)=1$ as claimed. The case of lower incomparable bad $P$ is analogous.

\medskip

\textsc{Case III} $P$ is mixed incomparable bad.

\smallskip

If a largest independent set in $G^*_P[\cM_n]$ is a chain, then $\alpha(G^*_P[\cM_n])\le \sigma(P)$ as in Case I, and all chains $A_0\subset A_1\subset \dots \subset A_r$ with $|A_r\setminus A_0|<\sigma(P)$ form an independent set, so $\alpha(G^*_P[\cM_n])= \sigma(P)$ and using the symmetric chain decomposition as in Case I, we obtain $\arm(n,2^n,P)=(\frac{1}{\sigma(P)}+o(1))2^n$.

So, by Proposition \ref{independent}, we can assume that a largest independent set in $G^*_P[\cM_n]$ is a union of two chains $A_0\subset A_1\subset \dots A_{r-1}$ and $B_0\subset B_1 \subset \dots B_{s-1}$. We claim that all pairs $A_i,B_j$ are incomparable. Suppose not and $A_i\subset B_j$ for some $i$ and $j$. There exist $i',j'$ such that $A_{i'},B_{j'}$ are incomparable, as otherwise the two chains would form one single chain. But then, $|A_{i'}\cup B_{j'}|\le |B_j|+2n^{2/3}\le n-|P|$ and $|A_{i'}\cap B_{j'}|\ge |A_i|-2n^{2/3}\ge |P|$ if $n$ is large enough, and then by Proposition \ref{intcoint}, $A_{i'},B_{j'}$ would form an edge in $G_P^*[\cM_n]$. This contradiction shows that all $A_i,B_j$ are incomparable.

Next we claim that we can assume that $A_0\cap B_0=\emptyset$ and $A_{r-1}\cup B_{s-1}=[n]$. Suppose first $A_0\cap B_0\neq \emptyset$ with $|A_0|\ge |B_0|$. If $|A_0|-|A_0\cap B_0|\ge n/2-n^{2/3}$, then, writing $A'_i=A_i\setminus (A_0\cap B_0)\in \cM_n$, we claim that the $A_i'$ and the $B_j$ form an independent set of the same size $r+s$. The $A'_i$ induce an independent set because of Proposition \ref{reconstruct} and the fact $|A'_i\setminus A'_h|=|A_i\setminus A_h|$, while $A'_i,B_j$ is not an edge because of Proposition \ref{incom} and the facts $|A'_i\cap B_j|\le |A_i\cap B_j|$, $|A'_i\cup B_j|=|A_i\cup B_j|$. If $|A_0|-|A_0\cap B_0|< n-n^{2/3}$, then as $|A_{r-1}\setminus A_0|,|B_{s-1}\setminus B_0|\le \sigma(P)$, there exists a set $X\subseteq [n]\setminus (A_{r-1}\cup B_{s-1})$ of size $|A_0\cap B_0|\le \lambda^-(P)$. So writing $A''_i=(A_i\cup X)\setminus (A_0\cap B_0)$, the $A_i''$ and the $B_j$ form an independent set of size $r+s$. The $A''_i$ induce an independent set because of Proposition \ref{reconstruct} and the fact $|A''_i\setminus A''_h|=|A_i\setminus A_h|$, while $A''_i,B_j$ is not an edge because of Proposition \ref{incom} and the facts $|A''_i\cap B_j|\le |A_i\cap B_j|$, $|A''_i\cup B_j|\ge |A_i\cup B_j|$.
The proof for $A_{r-1}\cup B_{s-1}\neq [n]$ is analogous.
 
Let us bound $r+s$. As the $A_i$s and the $B_j$s form a chain, we have $r,s\le \sigma(P)$ and thus $r+s\le 2\sigma(P)$. Also, by definition and the assumptions $A_0\cap B_0=\emptyset$, $A_{r-1}\cup B_{s-1}=[n]$, we have $|[n]\setminus (A_0\cup B_0)|\le \lambda^+(P)-1$ and $|A_{r-1}\cap B_{s-1}|\le \lambda^-(P)-1$. For $0\le i\le r-1,0\le j\le s-1$, set $\kappa(i,j):=(n-|A_i\cup B_j|,|A_i\cap B_j|)$. Consider the sequence, $\kappa(0,0),\kappa(0,1),\dots,\kappa(0,s-1),\kappa(1,s-1),\dots,\kappa(r-1,s-1)$. The sequence starts with $(a,0)$ for some $a\le \lambda^+(P)-1$ and ends with $(0,b)$ for some $b\le \lambda^-(P)-1$. Also, as we always change a set to its superset, the first coordinate can never increase, the second coordinate can never decrease. Consecutive pairs, and therefore all pairs are different as we either introduce a new element to the intersection or to the union of the two sets. Therefore, the length $r+s-1$ of the sequence satisfies $r+s-1\le a+b+1\le \lambda^+(P)+\lambda^-(P)-1$. We obtained $\alpha(G_P^*[\cM_n])\le \min\{2\sigma(P),\lambda^+(P)+\lambda^-(P)\}$ as claimed.

To prove $\alpha(G_P^*[\cM_n])= \min\{2\sigma(P),\lambda^+(P)+\lambda^-(P)\}$, we need a construction. Let $r=\lfloor \frac{\lambda^+(P)+\lambda^-(P)}{2}\rfloor$ and $s=\lceil \frac{\lambda^+(P)+\lambda^-(P)}{2}\rceil$. Consider a chain pair $\cC^-,\cC^\perp$ from a twisted chain partition with $|\cC^\perp |\ge s$. Let $C_0\in \cC^-$ with $|C_0|<n/2-r$. Then there exists a  set $C_0^\perp\in \cC^\perp$ of size $n-|C_0|-\lambda^+(P)+1$. Consider the $r$ consecutive sets $C_0\subset C_1\subset \dots \subset C_{r-1}$ of $\cC^-$ with $|C_i|=|C_0|+i$ and the $s$ consecutive sets $C_0^\perp \subset C_1^\perp \subset \dots \subset C_{s-1}^\perp$ of $\cC^\perp$ with $|C_j^\perp|=|C_0^\perp|+j$. Observe that $|C_{r-1}\cap C_{s-1}^\perp|=r-1+s-1-(n-|C_0\cup C_0^\perp|)=r+s-1-\lambda^+(P)=\lambda^-(P)-1$. Also, by the definition of twisted chain partition, for all sets $C\in \cC^-, C^\perp \in \cC^\perp$ we either have $C\cap C^\perp=\emptyset$ or $C\cup C^\perp=[n]$. So for any $i,j$ we have that $(|C_i\cap C_j^\perp|,n-|C_i\cup C_j^\perp|)$ is either $(0,a)$ for some $a\le \lambda^+(P)-1$ or $(b,0)$ for some $b\le \lambda^-(P)-1$. So by Proposition \ref{incom}, $C_i,C_j^\perp$ do not form an edge in $G_P^*[\cM_n]$. So the only obstacle that can prevent the $C_i$s and $C_j^\perp$s to form an independent set is an edge within the $C_i$s or $C^\perp_j$s, which, by Proposition \ref{reconstruct}, happens if $s>\sigma(P)$. In this case we can take $\sigma(P)$ consecutive sets from the $C_i$ and from the $C_j^\perp$s to form an independent set of size $2\sigma(P)$. If $s\le \sigma(P)$, then the $C_i$s and the $C_j^\perp$s form an independent set of size $r+s=\lambda^+(P)+\lambda^-(P)$. This proves $\alpha(G_P^*[\cM_n])=\min\{2\sigma(P),\lambda^+(P)+\lambda^-(P)\}$.
 
Finally, we need a proper coloring of $G_P^*[\cM_n]$ with $(\frac{1}{\alpha(G_P^*[\cM_n])}+o(1))|\cM_n|=(\frac{1}{\alpha(G_P^*[\cM_n])}+o(1))2^n$ colors as then coloring $2^{[n]}\setminus \cM_n$ set-by-set with new colors yields a coloring of $G_P^*(n)$ showing $$\chi(G_P^*(n))\le \left(\frac{1}{\alpha(G_P^*[\cM_n])}+o(1)\right)|\cM_n|=\left(\frac{1}{\alpha(G_P^*[\cM_n])}+o(1)\right)2^n.$$ 
To this end, observe that the role of $r$ and $s$ are interchangeable in the
above construction, so one could have taken $s$ sets from $\cC^-$ and $r$ sets from $\cC^\perp$. Therefore, any pair $\cC^-,\cC^\perp$ in a twisted chain partition can be partitioned, with the exception of at most $\lambda^+(P)+\lambda^-(P)$ sets, into consecutive blocks $(B_1,B^\perp_1),(B_2,B_2^\perp),\dots$ such that all $B_i\cup B_i^\perp$ form an independent set with $|B_{2j+1}|=r,|B_{2j+1}^\perp|=s,|B_{2j}|=s,|B_{2j}^\perp|=r$ if $s\le \sigma(P)$ and $|B_i|=|B_i^\perp|=\sigma(P)$ if $s>\sigma(P)$. As the number of chain pairs in a twisted chain partition is $\binom{n}{\lfloor n/2\rfloor}=o(2^n)$, the number of sets uncovered by these block is $o(2^n)$ and sets in $B_i\cup B_i^\perp$ can be colored using the same color. As each of these color classes have size $\alpha(G_P^*[\cM_n])$, this finishes the proof of the construction.

Together with the lower bound, this proves the theorem.
\end{proof}

We have not yet considered possible values of the parameters $\sigma(P)$, $\lambda^+(P)$, $\lambda^-(P)$.

Let $S_k$ be the \textit{standard example} poset with elements $a_1,\dots,a_k,b_1,\dots,b_k$ with all relations being $a_i\prec b_j$ whenever $i\neq j$. The pairs $(a_i,b_i)$ are incomparable good, so $S_k$ is incomparable good. We claim $\sigma(S_k)=k-2$ for $k\geq 3$. The embedding $a_i\mapsto \{i\}, b_i\mapsto [k]\setminus \{i\}$ shows $\sigma(S_k)\le (k-1)-1=k-2$. To see the lower bound, observe that $\iota(a_i)\setminus \cup_{j:j\neq i}\iota(a_j)\neq \emptyset$ must hold for all $i$ and embedding $\iota$ as otherwise $\iota(a_i)\subseteq \cup_{j:j\neq i}\iota(a_j) \subseteq \iota(b_i)$ would contradict $\iota$ being an embedding. Therefore, denoting by $x_h$ an element of  $\iota(a_h)\setminus \cup_{j:j\neq h}\iota(a_j)$, for any $i\neq j$ we have $\iota(b_j)\setminus \iota(a_i)\supset \{x_h: h\neq i,j\}$ and so $|\iota(b_j)\setminus \iota(a_i)|\ge k-2$.

One can define $S^\ell_k$ on elements $a^j_i$ with $1\le j\le \ell+1$, $1\le i\le k$ with cover relations $a_i^j\prec a_{i'}^{j+1}$ for all $i\neq i'$ and $a_i^j\prec a_i^{j+2}$. Observe $S_k=S^1_k$. For $\ell\ge 2$, $S^\ell_k$ is strong incomparable bad, but it is not hard to see $2(k-2)=2\sigma(S^\ell_k)\le \lambda^+(S^\ell_k)+\lambda^-(S^\ell_k)$. We have not identified any strong incomparable  posets $P$ with $\lambda^+(P)+\lambda^-(P)<2\sigma(P)$ or with $\lambda^+(P),\lambda^-(P)$ having different parities.

\section{The case of $m=\Theta(\binom{n}{\lfloor n/2\rfloor})$}

Let us recall that $\cM_{n,k}=\cup_{i=1}^{k}\binom{[n]}{\lfloor \frac{n-k}{2}\rfloor+i}$ denotes the family of the $k$ middle layers. For $n>k$ and $L\subseteq \{0,1,\dots,k-1\}$, the generalized Johnson graph $J(n,k,L)$ has vertex set $\binom{[n]}{k}$ and $F,G$ are connected by an edge if $|F\cap G|\in L$. For a recent survey on Johnson graphs, their independence, clique, and chromatic number see \cite{Rai}.

For any two pairs, $F,G$ and $F',G'$ with $|F|=|F'|$, $|G|=|G'|$ and $|F\cap G|=|F'\cap G'|$, there is an automorphism $\phi$ of $(2^{[n]},\subseteq)$ that sends $F$ to $F'$, $G$ to $G'$, and $F\cap G$ to $F'\cap G'$. So if a strong (weak) copy $\cH\subseteq \cM_{n,k}$ of a poset $P$ contains $F$ and $G$, then the strong (weak) copy $\phi(\cH)$ contains $F',G'$.
Therefore the fact whether $F$ and $G$ are connected in $G_P(\cM_{n,k})[\binom{[n]}{\lfloor \frac{n-k}{2}\rfloor +i}]$ or in $G^*_P(\cM_{n,k})[\binom{[n]}{\lfloor \frac{n-k}{2}\rfloor +i}]$ depends only on the size of their intersection. So $G_P(\cM_{n,k})[\binom{[n]}{\lfloor \frac{n-k}{2}\rfloor +i}]$ is isomorphic to $J(n,\lfloor \frac{n-k}{2}\rfloor +i,L_{P,n,k,i})$ and $G^*_P(\cM_{n,k})[\binom{[n]}{\lfloor \frac{n-k}{2}\rfloor +i}]$ is isomorphic to $J(n,\lfloor \frac{n-k}{2}\rfloor +i,L^*_{P,n,k,i})$, where $L_{P,n,k,i}$ and $L^*_{P,n,k,i}$ are subset of $\{0,1,\dots,\lfloor \frac{n-k}{2}\rfloor +i-1\}$.

\begin{proof}[Proof of Proposition \ref{exponent}]
    Let us define $E_{P,n,k,i}=\{\lfloor \frac{n-k}{2}\rfloor +i-\ell: \ell \in L_{P,n,k,i}\}$ and define $m_{P,k,i}:=\max\bigl(\{0\}\cup E_{P,n,k,i}\bigr)$. For fixed $P, k, i$, the sets $E_{P, n, k, i}$ are eventually constant as $n \rightarrow \infty$. Indeed, every realization can be extended to a larger Boolean lattice by adding each new ground element either to all sets or to none of them, according to the shift of the middle layers. As $P$ is connected, if the diameter of the Hasse diagram of $P$ is $d$, then $m_{P,k,i}\le d(k-1)$. Indeed, if $F,G$ are sets of size $\lfloor \frac{n-k}{2}\rfloor +i$ contained in a copy of $P$, then there is a path of length at most $d$ in $H(P)$ connecting the elements $p,q\in P$ that they play the role of. At each edge of $H(P)$ we might remove or add at most $k-1$ vertices from $F$ to get closer to $G$. 

    By a coding theory construction of Graham and Sloane \cite{GS}, we have $\chi(J(n,\lfloor \frac{n-k}{2}\rfloor +i,L_{P,n,k,i}))\le \chi(J(n,\lfloor \frac{n-k}{2}\rfloor +i,[\lfloor\frac{n-k}{2}\rfloor+i-m_{P,k,i},\lfloor\frac{n-k}{2}\rfloor+i-1]))=O(n^{m_{P,k,i}})$ and thus letting $m_{P,k}:=\max \{m_{P,k,i}:0< i\le k\}$, by coloring layers $\binom{[n]}{\lfloor \frac{n-k}{2}\rfloor +i}$ separately, using distinct color sets, we obtain that   $$\chi(G_P(\cM_{n,k}))\le \sum_{i=1}^{k}\chi\left(G_P(\cM_{n,k})\left[\binom{[n]}{\lfloor \frac{n-k}{2}\rfloor +i}\right]\right)\le \sum_{i=1}^{k}O(n^{m_{P,k,i}})=O( n^{m_{P,k}})$$ for fixed $k$ and $P$ while $n$ grows to infinity.

It is not hard to see that $\chi(G_P(\cM_{n,k}))=\Theta(n^{m_{P,k}})$ holds. Indeed, if $m_{P,k}=0$, then $\chi(G_P(\cM_{n,k}))\ge 1=\Theta(n^0)$. If $m_{P,k}\ge 1$, then we use a result of Frankl and F\"uredi \cite{FF} that states if $t,\ell$ are fixed integers and $n\ge n_0(t,\ell)$, then any $\cF\subseteq \binom{[n]}{\ell}$ with $|F\cap F'|\neq t$ has size $O(n^{\max \{t,\ell-t-1\}})$. Suppose $m_{P,k}=m_{P,k,i}$. Then an independent set in $G_P(\cM_{n,k})[\binom{[n]}{\lfloor \frac{n-k}{2}\rfloor+i}]$ corresponds to a family $\cF$ for which $|F\cap F'|\notin L_{P,n,k,i}$. So we can fix an integer $\ell$ with $\ell>2m_{P,k}$ and count pairs $(B,F)$ with $B\subset F\in \cF$ and $|B|=\lfloor \frac{n-k}{2}\rfloor+i-\ell$. For every $B\in \binom{[n]}{\lfloor \frac{n-k}{2}\rfloor+i-\ell}$, define $\mathcal{F}_B=\{F\setminus B:F\in \mathcal{F},\ B\subseteq F\}$. Then $\cF_B$ is an $\ell$-uniform family. Moreover, if $X,Y\in \cF_B$ are distinct and $|X\cap Y|=\ell-m_{P,k}$, then the corresponding sets $F=B\cup X$ and $F'=B\cup Y$ satisfy $|F\cap F'|=|B|+|X\cap Y|=\lfloor \frac{n-k}{2}\rfloor+i-m_{P,k}\in L_{P,n,k,i}$, which is a contradiction. Hence, no two of $\cF_B$ intersect in exactly $\ell-m_{P,k}$ elements. The number of such pairs is exactly $|\cF|\cdot \binom{\lfloor \frac{n-k}{2}\rfloor+i}{\ell}$, but also, by the Frankl-F\"uredi result, at most $\binom{n}{\lfloor \frac{n-k}{2}\rfloor+i-\ell}\cdot O(n^{\ell-m_{P,k}})$. We obtain \[
|\cF|\le O\left(\frac{\binom{n}{\lfloor \frac{n-k}{2}\rfloor+i-\ell}}{\binom{\lfloor \frac{n-k}{2}\rfloor+i}{\ell}}\frac{n^\ell}{n^{m_{P,k}}}\right)=O\left(\frac{1}{n^{m_{P,k}}}\binom{n}{\lfloor \frac{n-k}{2}\rfloor+i}\right),
\]
and thus $$\chi(G_P(\cM_{n,k}))\ge \chi\left(G_P(\cM_{n,k})\left[\binom{[n]}{\lfloor \frac{n-k}{2}\rfloor+i}\right]\right) \ge \frac{\binom{n}{\lfloor \frac{n-k}{2}\rfloor+i}}{\alpha(G_P(\cM_{n,k})[\binom{[n]}{\lfloor \frac{n-k}{2}\rfloor+i}])}=\Omega(n^{m_{P,k}}).$$

The proof of the statement for $\chi(G^*_P(\cM_{n,k}))$ is analogous.
\end{proof}

\begin{rem}
    Note that the statement of Proposition \ref{exponent} does not remain true for disconnected posets. Indeed, let $P=P_1\cup P_2$ with $p_1 \perp p_2$ for all $p_1\in P_1,p_2\in P_2$. Suppose $P$ and thus $P_1$ and $P_2$ embed into $\cM_{n,k}$ and let $G_1,G_2\in \cM_{n+2,k}$ with $|G_1|=|G_2|$ and $x\in G_1\setminus G_2$, $y\in G_2\setminus G_1$. Then one can embed $P_1$ and $P_2$ into the $k$ middle layers of $2^{[n]\setminus \{x,y\}}$ such that the copy $\cF_1$ of $P_1$ contains $G_1\setminus \{x\}$ and the copy $\cF_2$ of  $P_2$ contains $G_2\setminus \{y\}$. Then $\{G\cup \{x\}:G\in \cG_1\}\cup \{G\cup \{y\}:G\in \cG_2\}$ is a copy of $P$ containing $G_1,G_2$. Thus we obtain that $G^*_P[\binom{[n+2]}{\frac{n+2-k}{2}+i}]$ is a clique.
\end{rem}

Let us state one part of the previous proof as a separate corollary. Analogously to $E_{P,n,k,i}$, we define $E^*_{P,n,k,i}=\{\lfloor \frac{n-k}{2}\rfloor +i-\ell: \ell \in L^*_{P,n,k,i}\}$.

\begin{cor}\label{E}
    For any poset $P$ and integer $k$, we have 
   \[
m_{P,k}
=\max\left(\{0\}\cup\bigcup_{i=1}^k E_{P,n,k,i}\right),
\qquad
m^*_{P,k}
=\max\left(\{0\}\cup\bigcup_{i=1}^k E^*_{P,n,k,i}\right).
\]
\end{cor}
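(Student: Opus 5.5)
The corollary only repackages what the proof of Proposition \ref{exponent} already establishes, so the plan is to retrace that proof and note that the exponent it produces is the quantity on the right-hand side. Throughout, $n$ is taken large enough that the sets $E_{P,n,k,i}$ and $E^*_{P,n,k,i}$ have stabilized. This stabilization was argued in the proof via the extension trick: a new ground element is added either to all sets of a realization or to none of them. I would restate that argument and check it in both directions, namely that an intersection size realizable for $n$ remains realizable (shifted appropriately) for $n+2$, and conversely. I would also note the parity subtlety and treat the two parities of $n$ separately if needed. With stabilization in hand, the right-hand side is a well-defined integer independent of $n$.

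First I would recall the definitions: $m_{P,k,i}=\max(\{0\}\cup E_{P,n,k,i})$ and $m_{P,k}=\max_i m_{P,k,i}$. Since a maximum of maxima is the maximum over the union, this gives $\max_i m_{P,k,i}=\max(\{0\}\cup\bigcup_i E_{P,n,k,i})$. The substantive content is that this exponent is the correct one, that is, $\chi(G_P(\cM_{n,k}))=\Theta(n^{m})$ with $m$ the displayed maximum. By the uniqueness of the exponent in a $\Theta(n^{\cdot})$ statement, it then coincides with the $m_{P,k}$ whose existence Proposition \ref{exponent} asserts.

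For the upper bound I would color each layer separately. On layer $i$, the graph $G_P(\cM_{n,k})$ restricted to that layer is the Johnson graph $J(n,\lfloor\frac{n-k}{2}\rfloor+i,L_{P,n,k,i})$. All of its edges join sets whose symmetric difference is at most $2m_{P,k,i}$, so the Graham--Sloane code coloring gives $O(n^{m_{P,k,i}})$ colors. Summing over the $k$ layers gives $O(n^{m})$.

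For the lower bound I would take the layer $i$ attaining the maximum and apply the Frankl--F\"uredi averaging over the $\ell$-subsets of $F$ (links $\cF_B$). This shows that every independent set in that layer has size $O(n^{-m}\binom{n}{\lfloor\frac{n-k}{2}\rfloor+i})$, hence $\chi=\Omega(n^m)$. The case $m=0$ is trivial. The strong case is identical with $L^*$ and $E^*$.

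The main obstacle is the well-definedness, namely the eventual constancy of $E_{P,n,k,i}$ in $n$. Since the rest is already in the proof of Proposition \ref{exponent}, I would spend most of the effort making that extension argument precise.
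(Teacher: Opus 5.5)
Your proposal is correct and matches the paper, which gives no separate argument and simply reads the corollary off the proof of Proposition \ref{exponent}: there $m_{P,k}$ is defined as $\max_i m_{P,k,i}$ with $m_{P,k,i}=\max(\{0\}\cup E_{P,n,k,i})$, and the Graham--Sloane and Frankl--F\"uredi bounds show this is the correct exponent. For the stabilization, the ``conversely'' direction you plan to check is not needed: adding the new ground element to all sets or to none gives $E_{P,n,k,i}\subseteq E_{P,n+1,k,i}$ for every $n$, so no parity split is required either. Together with the bound $d(k-1)$, where $d$ is the diameter of $H(P)$, this monotonicity gives eventual constancy; the bound uses connectedness of $P$, which is an implicit hypothesis inherited from Proposition \ref{exponent}.
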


For tree posets $T$, we know more. From Corollary \ref{E} and (2) of the next Observation, we have $m^*_{T,k}=m_{T,k}$ for all integers $k$ and tree posets $T$. 
We say that $r:T\rightarrow [1,k]$ is a \textit{rank function of $T$} if $p\prec_T q$ implies $r(p)<r(q)$. Also, let us define the canonical rank function $p:T\rightarrow [1,h(T)]$ as follows: let $T=\cup_{i=1}^{h(T)}\cM_i$, where $\cM_{j+1}$ is the set of minimal elements of $T\setminus \cup_{i=1}^j\cM_i$. Then $f(x)=i$ if and only if $x \in \cM_i$.

Let us define $L_{T,n,k}(i)$ as follows: we set $j\in L_{T,n,k}(i)$ if and only if there exist a rank function $r:T\rightarrow [1,k]$  and elements $x,y\in T$ with $r(x)=r(y)=i$ such that $j=\lfloor \frac{n-k}{2}\rfloor +i-\frac{1}{2}\sum_{h=1}^d|r(x_h)-r(x_{h-1})|$, where $x=x_0,x_1,\dots,x_d=y$ is the only path between $x$ and $y$ in the Hasse diagram of $T$.

\begin{obs}\label{layer}
    If $n$ is large enough with respect to $k\ge h(T)$ and $|T|$, then
    \begin{enumerate}
        \item        
        $L_{T,n,k}(i)\subseteq L^*_{T,n,k,i}$, i.e. a pair $F,G\in \binom{[n]}{\lfloor \frac{n-k}{2}\rfloor +i}$ with $|F\cap G|\in L_{T,n,k}(i)$ forms an edge in $G_T(\cM)$ and in $G^*_T(\cM)$,
        \item 
        $\min L_{T,n,k,i}=\min L_{T,n,k}(i)$.
    \end{enumerate}
\end{obs}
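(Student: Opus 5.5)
Part (1) is a construction and part (2) is a matching lower bound on how small an intersection a copy of $T$ can force; I treat them separately.

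For (1), fix a rank function $r:T\to[1,k]$, elements $x,y$ with $r(x)=r(y)=i$, and the Hasse path $x=x_0,x_1,\dots,x_d=y$. Put $s=\lfloor\frac{n-k}{2}\rfloor$, so a set of rank $\rho$ will have size $s+\rho$. Let $F,G$ be sets of size $s+i$ with $|F\cap G|=s+i-\frac12\sum_h|r(x_h)-r(x_{h-1})|$.

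First I embed the path. Walk from $F$ along the path. At an up-step of length $\ell=r(x_h)-r(x_{h-1})$, add $\ell$ fresh elements taken from $G\setminus F$. At a down-step, remove elements of $F\setminus G$, or fresh elements added earlier as appropriate. Since the total up-length equals the total down-length, this can be arranged to end exactly at $G$.

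A cleaner way to organize the path is to keep a core $C\subseteq F\cap G$ of size $|F\cap G|$. Each path vertex $x_h$ is mapped to $C\cup S_h$, where $S_h\subseteq (F\triangle G)$ is a set of size $r(x_h)-(\text{const})$. The $S_h$ move monotonically: the elements of $F\setminus G$ are deleted one by one at down-steps, the elements of $G\setminus F$ are added at up-steps, and nothing is ever re-added.

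Next I attach the remaining branches of $T$. Each vertex $u$ off the path hangs off a unique nearest path vertex. I embed each branch by adding or removing private fresh elements from outside $F\cup G$ (respectively, private elements of $C$). There are plenty of these because $n$ is large and all sizes stay in the middle $k$ layers. I assign each element of $T$ a distinct private marker so that the embedding is strong, as in the proof of Proposition \ref{goodpair}.

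The strongness check has two halves.
\begin{itemize}
\item Comparable pairs of $T$ map to nested sets, because along the tree each relation is a chain of Hasse edges in one direction and the images change monotonically.
\item Incomparable pairs map to non-nested sets, because of the private markers. Here the tree structure is essential: the unique path between two incomparable elements must change direction, and at the turning point a marker is introduced and then dropped.
\end{itemize}
This yields a strong copy in $\cM_{n,k}$ containing $F,G$, which proves $L_{T,n,k}(i)\subseteq L^*_{T,n,k,i}\subseteq L_{T,n,k,i}$.

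For (2), the inclusion from (1) already gives $\min L_{T,n,k,i}\le\min L_{T,n,k}(i)$. For the reverse inequality, take any weak copy of $T$ in $\cM_{n,k}$ containing $F,G$ in layer $s+i$. Define $r(u)=|\iota(u)|-s$. This is a function into $[1,k]$ with $r(u)\le r(v)$ whenever $u\preceq v$. Weak copies are injective, so $r(u)<r(v)$ strictly when $u\prec v$, and $r$ is a rank function.

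Along the Hasse path from $x$ to $y$, passing from $\iota(x_{h-1})$ to $\iota(x_h)$ for comparable sets changes the set by exactly $|r(x_h)-r(x_{h-1})|$ elements, in one direction. Hence $|F\setminus G|$ is at most the total amount removed along the path. Since $|F|=|G|$, the amount removed and the amount added are each at least $|F\setminus G|$, and their sum is $\sum_h|r(x_h)-r(x_{h-1})|$. Therefore
\[|F\cap G|\ge s+i-\tfrac12\textstyle\sum_h|r(x_h)-r(x_{h-1})|,\]
which is an element of $L_{T,n,k}(i)$. This gives $\min L_{T,n,k,i}\ge\min L_{T,n,k}(i)$.

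The main obstacle is the strongness verification in (1). Making sure that no unintended containments arise between a branch element and path elements requires a careful choice of markers; for this I would mimic the case analysis in Proposition \ref{goodpair}.
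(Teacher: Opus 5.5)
Your proposal is correct and follows the paper's proof. In (1) you route the Hasse path from $F$ to $G$ by deleting elements of $F\setminus G$ at down-steps and adding elements of $G\setminus F$ at up-steps, then hang the remaining branches on fresh, previously unused elements. In (2) you read off the rank function $r(u)=|\iota(u)|-\lfloor\frac{n-k}{2}\rfloor$ from a weak copy and bound $|F\setminus G|$ by half the total rank variation along the path, exactly as the paper does.

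The strongness check you flag as the main obstacle needs no case analysis in the style of Proposition~\ref{goodpair}. Genuinely extra per-element markers, as in that proposition, would actually push sets out of their prescribed layers. Instead, note that every Hasse edge receives its own nonempty color set and these sets are pairwise disjoint. Writing $H_u$ for the image of $u$, the set $H_u\setminus H_v$ is then precisely the union of the color sets of the edges traversed downward on the $u$--$v$ path. Hence $H_u\subseteq H_v$ if and only if that path is monotone increasing, i.e.\ if and only if $u\preceq v$.
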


\begin{proof}
To show (1), let $r$ be the rank function showing $|F\cap G|\in L_{T,n,k}(i)$ and let $x$ and $y$ be the two elements of $T$ with $|F\cap G|=\lfloor\frac{n-k}{2}\rfloor+i-\frac{1}{2}\sum_{h=1}^d|r(x_h)-r(x_{h-1})|$. We shall define a strong copy $\cT$ of $T$ containing both $F$ and $G$. For an edge $e$ of the Hasse diagram of $T$, we will say that its \textit{color set} is $H\setminus H'$, where $H'\subset H$ play the role of the endpoints of $e$ in $\cT$. First we define the sets of a copy $\cT$ for the path $x=x_0,x_1,\dots,x_d=y$. We set $F$ to play the role of $x$ and $G$ to play the role of $y$. For any $h$ with $r(x_h)>r(x_{h-1})$ if $F_{h-1}$ is defined, then we set $F_h$ by adding $r(x_h)-r(x_{h-1})$ elements of $G\setminus F$ disjoint with $F_{h-1}$, while if $r(x_h)<r(x_{h-1})$, then we remove $r(x_{h-1})-r(x_{h})$ elements of $(F\setminus G)\cap F_{h-1}$ from $F_{h-1}$ to obtain $F_h$. As $|F\cap G|=\lfloor\frac{n-k}{2}\rfloor+i-\frac{1}{2}\sum_{h=1}^d|r(x_h)-r(x_{h-1})|$ and $|F|=|G|=\lfloor\frac{n-k}{2}\rfloor+i$, all elements of $F\setminus G$ are removed and all elements of $G\setminus F$ are added. To finish the definition of $\cT$, we extend our embedding by edges of the Hasse diagram: we always add or remove $|r(t)-r(t')|$ elements that are not contained in any previous color sets. This is possible as $n$ is large enough and all sets are of size roughly $n/2$ and $T$ is fixed.

To show (2), let $\cT$ be a weak copy of $T$ in $\cM_{n,k}$ containing $F,G$ of size $\lfloor \frac{n-k}{2}\rfloor+i$ corresponding to $x,y\in T$. For $t\in T$, let us define $r(t)=|H_t|-\lfloor \frac{n-k}{2}\rfloor$, where $H_t$ is the set corresponding to $t$. Now clearly, $|F\triangle G|\le \sum_{h=1}^d|r(x_h)-r(x_{h-1})|$ for the path $x=x_0,x_1,\dots,x_d=y$ in the Hasse diagram of $T$ and so $|F\cap G|=|F|-\frac{1}{2}|F\triangle G|$ is at least $\lfloor \frac{n-k}{2}\rfloor+i-\frac{1}{2} \sum_{h=1}^d|r(x_h)-r(x_{h-1})|\ge \min L_{T,n,k}(i)$.
\end{proof}

\medskip

For upward, downward, and two-way monotone trees it is even simpler to determine $m_{T,k}=m^*_{T,k}$.

\begin{prop}\label{maxexponent} \
    \begin{enumerate}
        \item 
        For any upward monotone tree $T$ and integer $k\ge h(T)$, we have $m_{T,k}=m_{T,k,k}=k-f(y)$, where $y\in T$ is the largest element of $T$ that is comparable with all elements and $f(y)$ is the length of the longest chain in $T$ with $y$ being its largest element.
        \item 
        For any downward monotone tree $T$ and integer $k\ge h(T)$, we have $m_{T,k}=m_{T,k,1}=f(z)-1$, where $z\in T$ is the smallest element of $T$ that is comparable with all elements and $f(z)$ is the length of the longest chain in $T$ with $z$ being its smallest element.
        \item 
        For any two-way monotone tree $T$ and integer $k\ge h(T)$, we have $m_{T,k}=m_{T,k,1}$ or $m_{T,k}=m_{T,k,k}$. Furthermore, if $m_{T,k}=m_{T,k,k}$ and $x$ is the largest element comparable to all other elements of $T$, then $m_{T,k}=m_{U_T[x],k-f(x)+1}=k-f(x)$.
    \end{enumerate}
\end{prop}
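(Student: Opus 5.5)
By Corollary \ref{E} and Observation \ref{layer}(2), the plan is to reduce everything to a maximization over rank functions. For each layer $i$, $m_{T,k,i}$ is the largest value of $\frac12\sum_{h=1}^d|r(x_h)-r(x_{h-1})|$. The maximum is taken over all rank functions $r:T\to[1,k]$ and all pairs $x,y$ with $r(x)=r(y)=i$, where $x=x_0,\dots,x_d=y$ is the path between them in $H(T)$. Indeed, $\min L_{T,n,k,i}=\min L_{T,n,k}(i)$ gives $\max E_{T,n,k,i}$ exactly as this half-variation. So $m_{T,k}$ is the maximum over $i,r,x,y$ of the half total variation of $r$ along the Hasse path from $x$ to $y$.

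For (1), let $T$ have smallest element, and let $y$ be the largest element comparable to all others. Then $T$ is the chain $D_T[y]$, of length $f(y)$, with $U_T(y)$ hanging above it. Any Hasse path between two elements $x,y'$ of equal rank goes up from $x$ to their meet $z$ and then back up to $y'$. Since $T$ is an upward tree, the path has the form $x\searrow z\nearrow y'$, i.e. it descends from $x$ to $z$ and then ascends to $y'$. The half-variation is therefore exactly $r(x)-r(z)=i-r(z)$. If $x\ne y'$, then $z$ is a branching point, or lies below one, so $z\succeq y$ or $z\in D_T[y]$.
\begin{itemize}
\item Upper bound: every element $z$ strictly above $y$ that can be a meet of two incomparable elements is not comparable to all elements, so such a $z$ lies in $U_T(y)$, and then $r(z)\ge r(y)+1\ge f(y)+1$. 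If instead $z\preceq y$, then $x,y'\succeq$ some element above $y$, and we can replace $z$ by a point of $U_T[y]$. Because $y$ is maximal among comparable-to-all elements, the meet satisfies $z\succeq y$, so $r(z)\ge f(y)$. Hence $i-r(z)\le k-f(y)$.
\item Lower bound (attained): take $r$ with $r(t)=f(t)$ on $D_T[y]$ (the canonical rank, so $r(y)=f(y)$). Push two branches above $y$ so that their maximal elements get rank $k$. This is possible since $k\ge h(T)$: stretch one element on each branch. This gives $x,y'$ at rank $k$ with meet $y$, so the value is $k-f(y)$ and is attained at $i=k$.
\end{itemize}
A degenerate case needs care: if $U_T(y)$ is a chain, then $T$ is a chain, which is excluded. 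I expect this degenerate case to be routine.

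Part (2) follows from (1) by duality. Complementing sets maps $\cM_{n,k}$ to itself, reverses layers ($i\leftrightarrow k+1-i$), and turns $T$ into its dual. For (3), write $T=D_T(x_1)\cup\{x_1,\dots,x_\ell\}\cup U_T(x_\ell)$. Any Hasse path between equal-rank elements is a valley path inside $U_T[x_\ell]$, a peak path inside $D_T[x_1]$, or passes through the chain. A path through the chain goes between an element below $x_1$ and one above $x_\ell$, and these are comparable. Equal rank is impossible there, so those pairs do not occur. The value is therefore the max of the upward-tree quantity for $U_T[x_\ell]$ (with maximum at $i=k$) and the downward quantity for $D_T[x_1]$ (with maximum at $i=1$). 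This gives $m_{T,k}\in\{m_{T,k,1},m_{T,k,k}\}$.

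In the case $m_{T,k}=m_{T,k,k}$, I apply the argument of (1) to the upward tree $U_T[x]$, where $x=x_\ell$. The elements below $x$ force $r(x)\ge f(x)$, so the available ranks above $x$ form an interval of length $k-f(x)+1$. This yields $m_{U_T[x],k-f(x)+1}=(k-f(x)+1)-1=k-f(x)$. Here the largest comparable-to-all element of $U_T[x]$ is $x$ itself, with $f$-value $1$ inside it.

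The main obstacle is the upper bound in (1). I have to argue carefully that the meet of two distinct equal-rank elements cannot sit strictly below $y$ on the chain while achieving a larger drop. This is where maximality of $y$ is used, and the slack in stretching the rank function must be checked against the constraint $r(x)\le k$.
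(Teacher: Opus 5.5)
Your proposal is correct and follows essentially the same route as the paper. You reduce via Observation~\ref{layer} to maximizing the half-variation $i-r(z)$ along Hasse paths, bound it by $k-f(y)$ using that the meet satisfies $z\succeq y$, realize the bound with $r=f$ off the maximal elements and $r=k$ on them, use duality for (2), and split at the comparable-to-all chain for (3). The step $z\succeq y$ is immediate rather than the ``main obstacle'': incomparable elements cannot lie in the chain $D_T[y]$, whose members are comparable to everything, so both lie above $y$; maximality of $y$ is needed only to make $y$ branch in the lower-bound construction.

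One caveat, shared with the paper's one-line proof of (2): carrying out the duality gives $m_{T,k}=m_{T,k,1}=k-f(z)$, not the printed $f(z)-1$, which is a typo. For example, $T=\{a,b\prec c\}$ has $m_{T,k}=k-1$ while $f(c)-1=0$. The duality is cleanest via $r\mapsto k+1-r$ on rank functions, since complementation maps $\cM_{n,k}$ to itself only when $n-k$ is odd.
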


\begin{proof}
    By Observation \ref{layer}, $m_{T,k,i}$ is the maximum of $\frac{1}{2}\sum_{h=1}^d|r(x_h)-r(x_{h-1})|$ for some rank function $r:T\rightarrow [1,k]$ and elements $x_0,x_d$ with $r(x_0)=r(x_d)=i$ and the $x_h$ being the vertices of the path in $H(T)$ connecting $x_0,x_d$. In an upward monotone tree, this is just $i-r(a)$, where $a$ is the largest element in $D_T(x_0)\cap D_T(x_d)$. Clearly, $i$ cannot be more than $k$ and $a\ge y$, the largest element comparable to all $t\in T$. By definition, $r(y)\ge f(y)$ for all rank functions. This shows $m_{T,k}\le k-f(y)$. To see $m_{T,k}\ge k-f(y)$ consider the rank function $r$ with $r(x)=f(x)$ for all non-leaf element of $T$ and $r(x)=k$ for all leaves of $T$. Then there are two leaves $x,x'$ whose largest common ancestor is $y$, and so $\frac{1}{2}(|r(x)-r(y)|+|r(y)-r(x')|)=k-f(y)$.
    This finishes the proof of (1).

    (2) follows as the dual $T^\perp$ of a downward monotone tree poset $T$ is upward monotone. Applying (1) to $T^\perp$, we obtain (2).

    Finally, to see (3), observe that a two-way monotone tree $T$ is of the form $D_T[x_0]\cup \{x_1,\dots,x_{h-1}\}\cup U_T[x_h]$ for the chain $x_0\prec x_1\prec \dots \prec x_h$ of elements that are comparable to all elements of $T$. This implies that for any rank function $r:T\rightarrow [1,k]$ if $r(u)=r(v)$, then $u,v\in U_T[x_h]$ or $u,v\in D_T[x_0]$. Moreover, one can select a maximum chain $C$ in $D_T[x_0]$ and apply part (1) to $C\cup \{x_1,\dots,x_{h-1}\}\cup U_T[x_h]$ to obtain the largest $m_{T,k,i}$ because of the $U_T[x_h]$ part is $m_{T,k,k}$. Similarly, one can choose a maximum chain $C'$ in $U_T[x_h]$ and apply part (2) to $C'\cup \{x_1,\dots,x_{h-1}\}\cup D_T[x_0]$ to obtain the largest $m_{T,k,i}$ because of the $D_T[x_0]$ part is $m_{T,k,1}$.
\end{proof}

\begin{definition}
    For a family $\cF\subseteq 2^{[n]}$, its \textit{Lubell mass} is $$\lambda_n(\cF)=\sum_{F\in \cF}\frac{1}{\binom{n}{|F|}}=\frac{1}{n!}\sum_{F\in \cF}|F|!(n-|F|)!$$
    the expected value of $|\cC\cap \cF|$ over all maximal chains $\cC$ in $2^{[n]}$ taken uniformly at random. For convenience, we denote $\lambda_n(\cF)$ as $\lambda(\cF)$.

    For a set $F\in\cF$, we will write $\lambda_{n-|F|}(\cF)$ to denote the Lubell mass of $\cF(F)=\{F'\setminus F: F'\in\cF, F\subseteq F'\}$ in $2^{[n]\setminus F}$. 
\end{definition}

Next, we prove a special case of Theorem \ref{blowup} (1), from which the general case will easily follow.

\begin{thm}\label{blowupchain}
    For the chain poset $C_h$ on $h$ vertices and $x\in C_h$ and $\varepsilon>0$ there exists $\delta >0$ such that for $n$ large enough any family $\cF\subseteq 2^{[n]}$ with $|\cF|\ge (h-1+\varepsilon)\binom{n}{\lfloor \frac{n}{2}\rfloor}$ contains a weak copy of the blow-up $C_h(x,\delta n)$.
\end{thm}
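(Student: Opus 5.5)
Let $x$ be the $j$-th element of $C_h=\{c_1\prec\dots\prec c_h\}$, so $x=c_j$. In the blow-up $C_h(x,\delta n)$, the element $x$ is a single set. Above it sits a rooted tree: $\delta n$ copies of $c_{j+1}$ above $x$, each of these lies below $\delta n$ copies of $c_{j+2}$, and so on up to level $h-j$. Below $x$ there is the dual structure. A weak copy is therefore a set $F$ together with a ``$\delta n$-ary branching upward tree of depth $h-j$'' rooted at $F$ inside $\cF(F)$, and a ``$\delta n$-ary branching downward tree of depth $j-1$'' rooted at $F$. No incomparability is required, because we only need a weak copy. The only constraint is that the sets be distinct.

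\textbf{Step 1: pass to Lubell mass.} We reduce to families in the middle range. Delete from $\cF$ the sets with $||F|-n/2|>n^{2/3}$; this removes $o(\binom{n}{\lfloor n/2\rfloor})$ sets. The remaining family has Lubell mass $\lambda(\cF)\ge h-1+\varepsilon/2$, since every layer in this range has size $(1+o(1))\binom{n}{\lfloor n/2\rfloor}$.

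\textbf{Step 2: upward branching, by induction on depth.} We prove the following claim. If $\cG\subseteq 2^{[m]}$ consists of sets of size $\le m/2+2m^{2/3}$ and $\lambda(\cG)\ge t+\varepsilon'$ with $t\ge 1$ an integer, then some $G\in\cG$ is the root of a $\delta m$-ary upward tree of depth $t$ in $\cG$. Call $G$ \emph{$s$-good} if it roots such a tree of depth $s$. Let $\cB_s$ be the set of elements of $\cG$ that are not $s$-good but are $(s-1)$-good. For a non-$s$-good $G$, fewer than $\delta m$ of the ``children'' directions are available. Here the natural move is counting via random maximal chains.

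For a random maximal chain $\cC$, consider the lowest $s$-good set on $\cC$. Above a non-$s$-good set $G$, the $(s-1)$-good sets $G'\supset G$ of $\cG$ cannot contain $\delta m$ sets whose differences from $G$ are pairwise distinct. To keep them distinct with $\delta m$ branches, I would actually use minimal elements: pick for each $(s-1)$-good $G'\supsetneq G$ an element of $G'\setminus G$. If at least $\delta m$ distinct such elements occur, we obtain $\delta m$ distinct children. The weak copy does not need incomparability, so overlapping sets are fine as long as they are distinct. Otherwise all $(s-1)$-good supersets of $G$ have their first added element lying in a set $S_G$ of size $<\delta m$. The probability that a random chain through $G$ next adds an element of $S_G$ is $<\delta m/(m-|G|)\le 3\delta$.

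Hence, along a random maximal chain, each non-$s$-good member of $\cG$ lies below a further $(s-1)$-good member of the chain only with probability $O(\delta)$. Inductively, $\mathbb E|\cC\cap\{(s-1)\text{-good}\}|\le \mathbb E|\cC\cap\{s\text{-good}\}|+1+O(s\delta)$. Iterating $t$ times from $\lambda(\cG)\ge t+\varepsilon'$ gives positive expected count of $t$-good sets once $\delta\ll\varepsilon'/t^2$.

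\textbf{Step 3: combine both sides.} This is the main obstacle: the upward and downward trees must be attached to the same set $F$. I would run Step 2 simultaneously. Define $F$ to be \emph{$(a,b)$-good} if it roots an upward tree of depth $a$ and a downward tree of depth $b$. On a random maximal chain, the $j$-th set of $\cC\cap\cF$ from the bottom and the $(h-j+1)$-th from the top are typically the same when $|\cC\cap\cF|\ge h$. The chain counts above show that, with positive probability, the chain contains at least $h$ members and all but $O(h\delta)$ of the expected mass consists of sets that are both up-good of the right depth and down-good of the right depth. Concretely, I would prove that $\mathbb E[\#\{\text{non-}(h-j,\,\cdot)\text{-good sets on }\cC\}]\le h-j+O(\delta)$ by the Step 2 argument, and dually $\mathbb E[\#\{\text{non-}(\cdot,\,j-1)\text{-good}\}]\le j-1+O(\delta)$. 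Their sum is $h-1+O(\delta)<\lambda(\cF)$, so some set on some chain is good in both senses. That set $F$, with its two trees, is a weak copy of $C_h(x,\delta n)$. The delicate point is making the ``at most one extra bad set per level, up to $O(\delta)$'' estimate rigorous: the up-tree children must themselves be $(s-1)$-good within $\cF(F)$, and the size bounds must keep $m-|G|\ge n/3$ throughout.
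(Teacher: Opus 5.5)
Your overall plan matches the paper's in spirit. You restrict to the middle window, peel off pieces of Lubell mass about $1$ using random maximal chains, and glue the upward and downward halves by noting that the non-roots of each half-blow-up have small mass (otherwise they would contain such a root themselves). Your Step 3 is therefore fine: it is essentially the paper's $\cU,\cD$ argument, and it is not the real obstacle.

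The gap is in Step 2. You define $G$ to be $s$-good if it roots a $\delta m$-ary tree of depth $s$. You then treat ``$G$ has $\delta m$ distinct $(s-1)$-good proper supersets'' as sufficient for $s$-goodness. It is not sufficient, because a weak copy must be injective on the whole tree, and the depth-$(s-1)$ trees hanging from different children may use the same sets. Here is a concrete counterexample. On ground set $[m]$, let $k=\lfloor m^{2/3}\rfloor$ and $\ell=\lfloor (m-k)/2\rfloor$. Let $\cG=\binom{[m]}{\ell}\cup\binom{[m]}{\ell+1}\cup\cH$, where $\cH\subseteq\binom{[m]}{\ell+k}$ is sparse (e.g.\ random) and every $(\ell+1)$-set has $\Theta(m)$ supersets in $\cH$. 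Every set in the two full layers is $1$-good. However, grandchildren of an $\ell$-set $G$ must lie in $\cH$, and $G$ lies below only $\Theta(m^2/k)=\Theta(m^{4/3})$ members of $\cH$. That is fewer than the $(\delta m)^2$ grandchildren a $2$-good root needs. So no set is $2$-good, yet every maximal chain meets two $1$-good sets. This violates your inequality $\mathbb{E}|\cC\cap\{1\text{-good}\}|\le \mathbb{E}|\cC\cap\{2\text{-good}\}|+1+O(\delta)$. The dichotomy behind your random-chain estimate is false, and the induction collapses.

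A smaller issue in the same step: having fewer than $\delta m$ distinct chosen elements does not force the chain to add an element of $S_G$ next. What is true is that a family of fewer than $\delta m$ supersets is hit with probability less than $\delta m/(m-|G|)$.

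The missing idea is to decouple the counting from the embedding. The paper defines goodness by mass rather than by trees: $\cF_j=\{F\in\cF_{j+1}:\lambda_{n-|F|}(\cF_{j+1})\ge 1+\frac{\varepsilon}{2h}\}$. This yields $\lambda(\cF_{j+1}\setminus\cF_j)\le 1+\frac{\varepsilon}{2h}$ and $\cF_1\neq\emptyset$ cleanly. In the example above, layer $\ell+1$ is discarded immediately because $\cH$ has negligible mass.

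Only then is the tree built, level by level from a root in $\cF_1$. The overlap of subtrees is controlled by the \emph{type} of each set:
\begin{itemize}
\item If a set has $\frac{\varepsilon}{3hz}\binom{n-|F|}{a}$ supersets in $\cF_{j+1}$ at some bounded jump $a\le z$, then a shared superset has at most $2^{hz}$ possible parents. A random assignment then gives every parent $\Omega(n)$ private children.
\item Otherwise it has at least $\frac{1}{n^2}\binom{n-|F|}{b}$ supersets at some large jump $b>z$. These far outnumber the polynomially many sets already used, so children can be chosen greedily while keeping different branches far apart in symmetric difference.
\item A plurality thinning then makes the type constant on each level, at the cost of a constant factor in $\delta$.
\end{itemize}
Your argument needs some such mechanism.

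A minor point about Step 1: the layers within $n^{2/3}$ of the middle are not all of size $(1+o(1))\binom{n}{\lfloor n/2\rfloor}$. However, $\lambda(\cF)\ge|\cF|/\binom{n}{\lfloor n/2\rfloor}$ holds for every family, so your conclusion there stands.
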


Some of the ideas of the next proof are from \cite{GNPV}.

\begin{proof}
    Observe first that it is enough to prove the statement for $x$ being the minimal element of the chain. Indeed, by taking the complement family $\overline{\cF}=\{[n]\setminus F:F\in \cF\}$, we would immediately obtain the result for the maximal element of the chain. Finally, if $x$ is the $j$th element of the chain, then there exist $\delta_1$ belonging to $\varepsilon/3$ and the maximal element of $C_j$, and $\delta_2$ belonging to $\varepsilon/3$ and the minimal element of $C_{h-j+1}$. Setting $\delta=\min\{\delta_1,\delta_2\}$, for any family $\cF$ with $|\cF|\ge (h-1+\varepsilon)\binom{n}{\lfloor n/2\rfloor}$, one can define $\cU\subset \cF$ to be the subfamily of those sets that are not smallest elements of any weak copy of $C_{h-j+1}(x_{min},\delta n)$ in $\cF$ and $\cD\subset \cF$ to be the subfamily of those sets that are not largest elements of any weak copy of $C_{j}(x_{max},\delta n)$ in $\cF$. If the statement is proved for minimal and maximal elements of chains, then $|\cU|\le (h-j+\varepsilon/3)\binom{n}{\lfloor n/2\rfloor}$ and $|\cD|\le (j-1+\varepsilon/3)\binom{n}{\lfloor n/2\rfloor}$ and so any element in $\cF\setminus (\cU\cup \cD)$ is the root of a blowup $C_h(x_j,\delta n)$, and there are at least $\varepsilon/3\binom{n}{\lfloor n/2\rfloor}$ such sets.

    \medskip

    To prove the statement of the theorem with the minimal element of the chain being the root, we define families $\cF_h\supset \cF_{h-1}\supset \dots \supset \cF_1$ as follows. We set $\cF_h=\{F\in \cF: ||F|-n/2|\le n^{2/3}\}$, and then once $\cF_{j+1}$ is defined, we let $\cF_j=\{F\in \cF_{j+1}:\lambda_{n-|F|}(\cF_{j+1})\ge 1+\frac{\varepsilon}{2h}\}$. As shown in \cite{GNPV}:
    \begin{itemize}
        \item 
        $\cF\setminus \cF_h\subseteq \{G\subseteq [n]: ||G|-n/2|\ge n^{2/3}\}$ and this latter by Chernoff's inequality is of size $o(\frac{1}{n^2}\binom{n}{\lfloor n/2\rfloor})$ and so $|\cF_h|\ge (h-1+\varepsilon/2)\binom{n}{\lfloor n/2\rfloor}$,
        \item 
        for any $1\le j\le h-1$, the family $\cG_{j+1}=\cF_{j+1}\setminus \cF_j$, we have $\lambda(\cG_{j+1})\le 1+\varepsilon/2h$ and thus $|\cG_{j+1}|\le (1+\frac{\varepsilon}{2h})\binom{n}{\lfloor n/2\rfloor}$. Indeed, one uses the min-partition of the set $\mathbf{C}_n$ of all maximal chains of $2^{[n]}$ into $\cup_{G\in \cG_{j+1}}\mathbf{C}_G\cup \mathbf{C}_\emptyset$, where $\mathbf{C}_G$ denotes the set of all maximal chains $\cC$ such that the smallest set in $\cC\cap \cG_{j+1}$ is $G$ and $\mathbf{C}_\emptyset$ denotes the set of all maximal chains $\cC$ such that  $\cC\cap \cG_{j+1}$ is empty. By the definition of $\cF_j$ and $\cG_{j+1}$, for all $G\in \cG_{j+1}$ we have $\lambda_{n-|G|}(\cG_{j+1})\le 1+\frac{\varepsilon}{2h}$, and as $\lambda(\cG_{j+1})$ is a weighted average of the values $\lambda_{n-|G|}(\cG_{j+1})$, we have $\lambda(\cG_{j+1})\le 1+\frac{\varepsilon}{2h}$ as claimed.

        In particular, $|\cF_1|=|\cF_h|-\sum_{j=1}^{h-1}|\cG_{j+1}|\ge \frac{\varepsilon}{2h}\binom{n}{\lfloor n/2\rfloor}$, so $\cF_1$ is non-empty.
    \end{itemize}
    For a set $F\in \cF_j$ with $j<h$, we say that $F$ is of \textit{type $a$} for some integer $1\le a \le z:=h5^{h}$ if $\cF_{j+1}$ contains at least $\frac{\varepsilon}{3hz}\binom{n-|F|}{a}$ sets $F'$ with $F\subset F'$ and $|F'|=|F|+a$ and there is no $a'<a$ with the same property. Also, $F\in \cF_j$ is of \textit{type $+$} if it is not of type $a$ for any $a\le z$ and there exists $b> z$ such that $\cF_{j+1}$ contains at least $\frac{1}{n^2}\binom{n-|F|}{b}$ sets $F'$ with $F\subset F'$ and $|F'|=|F|+b$. Observe that every set $F\in \cF_j$ has a type as otherwise we would have $\lambda_{n-|F|}(\cF_{j+1})\le 1+z\cdot \frac{\varepsilon}{3hz}+2n^{2/3}\frac{1}{n^2}\le 1+\frac{\varepsilon}{2h}$ contradicting the definition of $F\in \cF_j$.

    \medskip

    Now we start defining the embedding of $C_h(x_{min},\delta n)$ into $\cF$. We do it level-by-level, with the root being an arbitrary set $F_{root}$ of $\cF_1$ and on the $j$th level we will use sets of $\cF_j$. We will keep the following rules:
    \begin{enumerate}
        \item 
        For any $j<h$, all sets on level $j$ will be of the same type.
        \item 
        Suppose there has been a level at which type + sets have been used, and the last level at which type + sets were used is level $j_0$. Then if $t,t'\in C_h(x_{min},\delta n)$ are at level $j>j_0$ and their ancestors at level $j_0$ are different, then if $F$ and $F'$ play the role of $t$ and $t'$ and $|F|=|F'|$, then $|F\triangle F'|> h5^{h-j}=:t_h(j)$.
    \end{enumerate}
    Suppose we have managed to define an embedding until level $j-1$ with (1) and (2) satisfied. Then to extend this embedding to level $j$, we first use a case analysis to obtain an embedding satisfying only (2), and then we apply a thinning out process that is the same in all cases to make the embedding satisfy (1).

    \medskip 
    
    \textsc{Case I} Sets at level $j-1$ are of type $a$ for $1\le a \le z$.

\smallskip

    \textsc{Case I/I} There is no level $i<j$ at which sets of type $+$ are used.

    Suppose all sets at level $j-1$ are of type $a$. Then for any set $F$ at level $j-1$, we pick $\frac{\varepsilon}{4hz}n$ supersets of $F$ of size $|F|+a$ in $\cF_j$. The same superset $F^*$ might occur for $F,F'\in \cF_{j-1}$, but, as $|F^*\setminus F_{root}|\le hz$, for any $F^*$ there are at most $D:=2^{hz}$ (a big but fixed constant) sets in $\cF_{j-1}$ it could have been picked for. So for each $F^*$ we pick uniformly at random to which of its subset at level $j-1$ it should belong. By Chernoff's bound the expected number of sets at level $j-1$ for which we have not picked at least $\frac{\varepsilon}{6hzD}n$ supersets $F^*$ tends to 0, so there exists a way to distribute the $F^*$ so that every set at level $j-1$ has at least $\frac{\varepsilon}{8hzD}n$ supersets. 

    \smallskip

    \textsc{Case I/II} There exists a level $i<j$ at which type $+$ sets are picked. Let the highest such level be $j_0<j$.

    \smallskip

    \textsc{Case I/II/I} $a\le 2t_h(j)$.

    For any set $F$ at level $j-1$, we again pick $\frac{\varepsilon}{4hz}n$ supersets of $F$ of size $|F|+a$ in $\cF_j$. If sets $F,F'\in \cF_{j-1}$ have different ancestors at level $j_0$, then by assumption (2), we have $|F\triangle F'|>t_h(j-1)$. So for any picked pair of supersets $F\subset F^*$, $F'\subset F'^*$, we have 
    $$|F^*\triangle F'^*|\ge |F\triangle F'|-2a > t_h(j-1)-4t_h(j)>t_h(j),$$ 
    so (2) is satisfied. If the ancestors of $F,F'$ at level $j_0$ are the same, then a superset $F^*$ can be assigned to both of them, but as in Case I/I we can distribute the $F^*$ so that each $F$ at level $j-1$ gets at least $\frac{\varepsilon}{8hzD}n$ supersets.
    
    \smallskip

    \textsc{Case I/II/II} $a> 2t_h(j)$.

    We pick supersets $F^*$ of sets $F$ at level $j-1$ greedily. There are less than $n^j\le n^h$ sets to be picked at level $j$. For each such set $F^*$ there are at most $n^{t_h(j)}$ other sets $F'^*$ of the same size with $|F^*\triangle F'^*|\le t_h(j)$. So the total number of too close sets is at most $n^{h+t_h(j)}=O(n^{2t_h(j)})=o(n^a)$. So we can pick supersets satisfying (2) even for those sets that have the same ancestor at level $j_0$. 

    \medskip 

    \textsc{Case II} Sets at level $j-1$ are of type $+$.

    Then for each set $F$ at level $j-1$, there exist some $b>z=h5^{h}$ such that $F$ has at least $\Omega(n^{z-2})$ supersets $F^*$ of size $|F|+b$. We want to embed $(\delta n)^{j-1}\le (\delta n)^h$ elements. For any $F^*$, the number of sets $F^{**}$ with $|F^*|=|F^{**}|$ and $|F^*\triangle F^{**}|\le t_h(j)$ is at most $n^{t_h(j)}$. So as $n^{z-2}=\omega(n^{h+t_h(j)})$ we can guarantee (2) greedily. Supersets of the same set $F$ at level $j$ will be distinct and there is no assumption on the size of their symmetric difference.

    \medskip

    Finally, we show how to thin out the already defined part of $C_h(x_{min}, \delta n)$ at the price of decreasing $\delta$ so that property (1) should be satisfied after adding level $j$ of the blow-up. So assume we have managed to define an embedding satisfying (2) as above and satisfying (1) up to level $j-1$. We now introduce a labeling that might differ from the type of a set $F$ in the blow-up, but is related to the types of $F$'s descendants. At level $j$, we label all sets with their type. Then at level $j-1$ we label all sets with the plurality type (the one that occurs the most number of times) of their children, and so on: once the labels of level $i$ are defined, then the label of a set $F$ at level $i-1$ is the label that appears the most number of times at its children at level $i$. Once we reach $F_{root}$ and define its label $\ell$, then we keep $F_{root}$ and its children with label $\ell$, and the children of these children labelled $\ell$, and so on. As the number of possible labels is $z+1$, all sets in the blow-up that survive the thinning will have at least $\frac{1}{z+1}$ of their original children. As the thinning out process occurs $h-2$ times, and originally, we define $\frac{\varepsilon}{8hzD}n$ children for all sets, at the end, we will still have a weak copy of $C_h(x_{min},\delta n)$, where $\delta=\frac{\varepsilon}{8hzD(z+1)^h}$. 
\end{proof}

Observe that for any two-way monotone tree poset $T$ and $y\in T$ that is comparable to all other elements of $T$, it holds that $r(y)$ is the same for any rank function $r:T\rightarrow [1,h(T)]$. So $T(y,\frac{\delta}{|T|}n)\subseteq C_{h(T)}(x_{f(y)},\delta n)$ and Theorem \ref{blowup} (1) follows from Theorem \ref{blowupchain}.

\smallskip

Next we prove Theorem \ref{blowup} (2) that we restate separately.

\begin{thm}\label{blowupheight2}
    For any tree poset $T$ of height $2$ and $x\in T$ and $\varepsilon>0$ there exists $\delta >0$ such that for $n$ large enough any family $\cF\subseteq 2^{[n]}$ with $|\cF|\ge (1+\varepsilon)\binom{n}{\lfloor \frac{n}{2}\rfloor}$ contains a weak copy $T(x,\delta n)$.
\end{thm}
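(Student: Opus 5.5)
A height-2 tree poset $T$ has every element either minimal or maximal, and its Hasse diagram $H(T)$ is a bipartite tree between the two levels. So a weak copy of $T(x,\delta n)$ is a tree of sets that alternates between going up and going down, rooted at the set playing $x$. Each vertex at distance $\rho<\mathrm{diam}$ from the root must get $\delta n$ new neighbours on the other side: supersets if it plays a minimal element, subsets if it plays a maximal one.

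The plan is a cleaning argument. The first step is to restrict to $\cF':=\cF\cap\cM_n$, which loses only $o(\binom{n}{\lfloor n/2\rfloor})$ sets. The second step is to iteratively delete \emph{poor} sets. A set $F$ is up-poor if it has fewer than $\delta' n$ proper supersets in the current family among sets with $|F'\setminus F|\le a$, in a suitable weighted sense; down-poor is defined analogously with subsets. Rather than counting, I would use the Lubell-type quantities $\lambda_{n-|F|}$ and their duals, as in the proof of Theorem \ref{blowupchain}.

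The point of the deletion is that the deleted up-poor sets together with the deleted down-poor sets should have total size at most $(1+\varepsilon/2)\binom{n}{\lfloor n/2\rfloor}$. The argument for this is as follows. A set is up-poor when the mass of its supersets is small. Using the min-partition of maximal chains, as in the $\cG_{j+1}$ bound of the proof of Theorem \ref{blowupchain}, the up-poor sets $\cU$ satisfy $\lambda(\cU)\le 1+\varepsilon/4$ minus the contribution of comparable pairs, and symmetrically for the down-poor sets. I would like to merge the two into a single chain-counting bound. A random maximal chain meets the deleted sets in at most one set on average plus $\varepsilon/4$, because any two deleted sets on a chain form a comparable pair in which the lower one is up-rich or the upper one is down-rich at the time of deletion. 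Ordering the deletions carefully, for instance always deleting a set that is both up-poor and down-poor relative to the sets deleted later, should make this work. This is the step I expect to be the main obstacle: with a single deletion type the argument is exactly the one from \cite{GNPV}, but mixing up- and down-deletions needs a stable ordering. The fallback is a bipartite ``kernel'' argument. Take the comparability graph of $\cF'$ restricted to pairs with small difference, and show that its $\lfloor\delta n\rfloor$-core (in the up/down directed sense) is nonempty because $|\cF'|>(1+\varepsilon)\binom{n}{\lfloor n/2\rfloor}$ exceeds the largest antichain-like structure.

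Once we have a nonempty family $\cF^*$ in which every set has both $\Omega(n)$ supersets and $\Omega(n)$ subsets within $\cF^*$, we embed $T(x,\delta n)$ greedily, breadth-first from an arbitrary root in $\cF^*$. Weak copies only require inclusions, not that the sets be distinct in a strong sense, but they must still be distinct sets. Distinctness is handled as in Case I/I of Theorem \ref{blowupchain}. Each new set is within bounded distance $a\cdot\mathrm{diam}(T)$ of the root, so any candidate set can be claimed by at most a constant number $D$ of parents. A random assignment together with a Chernoff bound, or a greedy choice, keeps $\Omega(n)$ children per vertex. If the neighbours come with the type$+$ structure, meaning far-away supersets numbering at least $n^{-2}\binom{n-|F|}{b}$, we instead use the polynomial-count greedy argument of Case II there, which avoids collisions trivially. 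Finally, thinning by plurality type, as at the end of the proof of Theorem \ref{blowupchain}, keeps uniform types while reducing $\delta$ by a constant factor. Since $T$ is fixed, its depth is bounded and the constants remain positive.
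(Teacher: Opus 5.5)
\textbf{There is a genuine gap.} The object you aim for is a nonempty $\cF^*$ in which every set has $\Omega(n)$ supersets \emph{and} $\Omega(n)$ subsets inside $\cF^*$. Such a family need not exist.

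Take $\cF=\binom{[n]}{\lfloor n/2\rfloor}\cup\binom{[n]}{\lfloor n/2\rfloor+1}$, which has size about $2\binom{n}{\lfloor n/2\rfloor}$. Lower sets have no subsets in $\cF$ and upper sets have no supersets. So deleting up-poor and down-poor sets, in any order, removes all of $\cF$. This refutes your bound $(1+\varepsilon/2)\binom{n}{\lfloor n/2\rfloor}$ on the deleted sets.

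Your chain-counting justification gives no contradiction here. On every maximal chain, the lower set is up-rich but is deleted as down-poor, and the upper set is down-rich but is deleted as up-poor. The same happens for any family inside two consecutive layers, for example one full layer plus $\varepsilon\binom{n}{\lfloor n/2\rfloor}$ sets of the next. So shrinking $\cF$ to size exactly $(1+\varepsilon)\binom{n}{\lfloor n/2\rfloor}$ does not help, and no ordering of deletions helps either.

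Yet $T(x,\delta n)$ embeds trivially into two full layers: lower sets play minimal elements and upper sets play maximal ones. In a height-2 tree each vertex needs many neighbors in only one direction, fixed by its role. So the target must be bipartite: families $\mathcal{A},\mathcal{B}$ in which every $A\in\mathcal{A}$ has many supersets in $\mathcal{B}$ and every $B\in\mathcal{B}$ has many subsets in $\mathcal{A}$. Your fallback ``core'' sentence is not set up this way. Moreover, being larger than an antichain only guarantees comparable pairs, not linearly many per set, so it supplies no degree bound.

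\textbf{The missing idea, which is how the paper argues, is to use only upward richness and let averaging produce the downward degrees.}

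Let $\cF_0=\cF\cap\cM_n$ and $\cD=\{F\in\cF_0:\lambda_{n-|F|}(\cF_0)\ge 1+\varepsilon/4\}$. The one-sided min-partition argument from the proof of Theorem \ref{blowupchain}, applied to $\cF_0\setminus\cD$ alone, gives $|\cD|\ge\frac{\varepsilon}{4}\binom{n}{\lfloor n/2\rfloor}$. A popular type then gives a family $\cD_j$ of size $\Omega(\binom{n}{\lfloor n/2\rfloor})$ whose members each have $\Omega(n^j)$ supersets in $\cF_0$ at distance exactly $j$. Type $+$ is handled greedily, as you suggest.

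Now form the bipartite graph between $\cD_j$ and a separate copy of $\cF_0$, joining $F$ to $F'$ when $F\subset F'$ and $|F'|=|F|+j$. Shrink $\cF$ beforehand to size at most $(1+\varepsilon')\binom{n}{\lfloor n/2\rfloor}$. Then both sides have size $\Theta(\binom{n}{\lfloor n/2\rfloor})$, so the average degree is $\Omega(n^j)$. A subgraph of minimum degree at least half the average gives every $\cD_j$-vertex $\Omega(n^j)$ supersets and every $\cF_0$-vertex $\Omega(n^j)$ subsets.

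Fixing a single $j$ also makes distinctness clean. If you choose children whose color sets avoid those already on the root path, then $|F'\triangle F_{root}|=dj$ at depth $d$. Only equal-depth collisions remain, and your bounded-parent random assignment handles those. With this bipartite core in place, your breadth-first embedding goes through, and the plurality thinning becomes unnecessary.
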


\begin{proof}
    First we can assume that $|\mathcal{F}|\leq (1+\varepsilon')\binom{n}{\lfloor\frac{n}{2}\rfloor}$ for some constant $\varepsilon'>0$, and we claim that it is enough to prove the statement for path posets $P_k$ and their end vertices. Let the $k$ elements of $P_k$ be $a_1,a_2,\dots,a_{\lceil k/2\rceil},b_1,b_2,\dots,b_{\lfloor k/2\rfloor}$ with $a_i\prec b_i$ for $1\le i\le \lfloor\frac{k}{2}\rfloor$ and $a_{i+1}\prec b_i$ for $1\le i\le \lceil\frac{k}{2}\rceil-1$. If we know that any family of size $(1+\varepsilon)\binom{n}{\lfloor n/2\rfloor}$ contains a weak copy of $P_k(a_1,\delta n)$, then applying this to the family of complements $\bar{\cF}=\{\bar{F}:F\in \cF\}$, we obtain that every family of size $(1+\varepsilon)\binom{n}{\lfloor n/2\rfloor}$ contains a weak copy of $P^\perp_k(c_1,\delta n)$, where $P^\perp_k$ has $k$ vertices $c_1,c_2,\dots,c_{\lceil k/2\rceil},d_1,d_2,\dots,d_{\lfloor k/2\rfloor}$  with $d_i\prec c_i$ for $1\le i\le \lfloor\frac{k}{2}\rfloor$ and $d_{i}\prec c_{i+1}$ for $1\le i\le \lceil\frac{k}{2}\rceil-1$. Finally, if $T$ is an arbitrary poset of height 2 and $x\in T$, then, writing $r=r(x)=\max\{d(x,t):t\in T\}$ with $d(x,t)$ being the graph distance in the undirected Hasse diagram of $T$, then $T(x,\frac{\delta}{|T|}n)$ is a subposet of $P_{r+1}(a_1,\delta n)$ or $P^\perp_{r+1}(c_1,\delta n)$ depending on whether $x$ is a minimal or a maximal element of $T$.

    To prove the statement for $P_k$ and $a_1$, we set $\cF_0=\{F\in \cF: ||F|-n/2|\le n^{2/3}\}$, and  we let $\cD=\{F\in \cF_{0}:\lambda_{n-|F|}(\cF_{0})\ge 1+\frac{\varepsilon}{4}\}$. We have:
    \begin{itemize}
        \item 
        $\cF\setminus \cF_0\subseteq \{G\subseteq [n]: ||G|-n/2|\ge n^{2/3}\}$ and this latter by Chernoff's inequality is of size $o(\frac{1}{n^2}\binom{n}{\lfloor n/2\rfloor})$ and so $|\cF_0|\ge (1+\varepsilon/2)\binom{n}{\lfloor n/2\rfloor}$,
        \item 
        for the family $\cU=\cF_{0}\setminus \cD$, we have $\lambda(\cU)\le 1+\varepsilon/4$ and thus $|\cU|\le (1+\frac{\varepsilon}{4})\binom{n}{\lfloor n/2\rfloor}$. Indeed, one uses the min-partition of the set $\mathbf{C}_n$ of all maximal chains of $2^{[n]}$ into $\cup_{G\in \cU}\mathbf{C}_G\cup \mathbf{C}_\emptyset$, where $\mathbf{C}_G$ denotes the set of all maximal chains $\cC$ such that the smallest set in $\cC\cap \cU$ is $G$ and $\mathbf{C}_\emptyset$ denotes the set of all maximal chains $\cC$ such that  $\cC\cap \cU$ is empty. By definition of $\cD$ and $\cU$, for all $G\in \cU$ we have $\lambda_{n-|G|}(\cU)\le 1+\frac{\varepsilon}{4}$, and as $\lambda(\cU)$ is a weighted average of the values $\lambda_{n-|G|}(\cU)$, we have $\lambda(\cU)\le 1+\frac{\varepsilon}{4}$ as claimed.
        In particular, $|\cD|=|\cF_0|-|\cU|\ge \frac{\varepsilon}{4}\binom{n}{\lfloor n/2\rfloor}$.
    \end{itemize}
We say that a set $F \in \cD$ is of type $j$ for some $1\le j \le k+2$ if there exist $\frac{\varepsilon}{8k}\binom{n-|F|}{j}$ sets $F'\in \cF_0$ of size $|F|+j$ with $F\subset F'$ and no $j'<j$ exists with the same property. We say that a set $F \in \cD$ is of type $+$ if it is not of type $j$ for any $j$, but there exists $h\ge k+3$ such that at least $\frac{1}{n}\binom{n-|F|}{h}$ sets $F'\in \cF_0$ of size $|F|+h$ are supersets of $F$.

We claim that every set $F\in \cD$ has a type. Indeed, if not then $\lambda_{n-|F|}(\cF_0)\le 1+\frac{\varepsilon(k+2)}{8k}+\frac{2n^{2/3}}{n}<1+\frac{\varepsilon}{4}$ for large enough $n$ and $k\ge 3$. (This latter we can assume as the statement for any $k$ implies the statement for any $k'<k$.) So there is a type such that the number of sets in that type is at least $\frac{\varepsilon}{4(k+3)}\binom{n}{\lfloor n/2\rfloor}$. Let $\cD_j$ and $\cD_+$ denote the sets of type $j$ and $+$, respectively.

\medskip

\textsc{Case I} $|\cD_j|\ge \frac{\varepsilon}{4(k+3)}\binom{n}{\lfloor n/2\rfloor}$ for some $1\le j\le k+2$.

\smallskip

Let us introduce the following bipartite graph $B_j$ with parts $\cD_j$ and $\cF_0$ such that $FF'$ with $F\in \cD_j$ and $F'\in \cF_0$ is an edge if and only $F\subset F'$ and $|F'|=|F|+j$. The average degree in $B_j$ is at least $\frac{\varepsilon^2}{64k(k+3)}\binom{n/2-2n^{2/3}}{j}$, and thus there exists a subgraph $B_j'$ of $B_j$ with minimum degree at least $\frac{\varepsilon^2}{128k(k+3)}\binom{n/2-2n^{2/3}}{j}$. We shall embed $P_k(a_1,\delta n)$ for some small enough $\delta$ into the sets corresponding to the vertices of $B_j'$. We choose an arbitrary set $F$ in the $\cD_j$ part of $B_j'$. For any edge $F_1F_1'$ of $B_j'$, we say that its colors are the elements of $F_1\triangle F_1'$. We want an embedding of $P_k(a_1,\delta n)$ such that for any $F'$ that we use in the embedding the color sets of the edges on the path from $F'$ to $F$ are pairwise disjoint. Observe that if we embedded $P_k(a_1,\delta n)$ until vertices that are at distance at most $d$ from $a_1$, then the number of colors used is $dj$ and the number of possible neighbors of a current set $F_0$ using at least one of these colors is at most $dj\binom{n/2+n^{2/3}}{j-1}=o(\binom{n/2-n^{2/3}}{j})$ as $j$ is fixed. As $j\ge 1$, we do have at least $\frac{\varepsilon^2}{260k(k+3)} n$ neighbors of $F_0$ that would not use colors from previous edges of the path from $F_0$ to $F$. Unfortunately, the same set $F_{00}$ might be assigned to different $F_0$ and $F_0'$, but, by the assumption that the color sets are pairwise disjoint along a path, we have that $F_{00}\triangle F \supset F_0\triangle F$ and  $F_{00}\triangle F \supset F_0'\triangle F$. As $|F_{00}\triangle F|\le (k-1)j$, the number of possible parents for $F_{00}$ is at most $2^{(k-1)j}$. So for every  $F_{00}$ we assign a parent uniformly at random over all possible parents. By Chernoff's inequality the probability that a parent is assigned to less than half of the expected number of children is less than $e^{-\frac{\varepsilon^4}{(260k(k+3))^2}n}$. As there are at most $n^{k-2}$ possible parents, with probability tending to one, all parents get at least $\frac{\varepsilon^2}{520k(k+3)2^{kj}} n$ children. This shows that we can continue this process to embed $P_k(a_1,\delta n)$ for small enough $\delta$.

\medskip

\textsc{Case II} $|\cD_+|\ge \frac{\varepsilon}{4(k+3)}\binom{n}{\lfloor n/2\rfloor}$.

\smallskip

Then there exists $k+3\le h \le 2n^{2/3}$ such that there exists $\cD_h\subseteq \cD_+$ with all $F\in \cD_h$ having at least $\frac{1}{n}\binom{n-|F|}{h}$ supersets $F'\in \cF_0$ with $|F'|=|F|+h$ and $|\cD_h|\ge \frac{\varepsilon}{8(k+3)n^{2/3}}\binom{n}{\lfloor n/2\rfloor}$. We consider the bipartite graph $B_h$ with parts $\cD_h$ and $\cF_0$ and edges defined by containment. The average degree is at least $\frac{\varepsilon}{16(k+3)n^{2/3}}\frac{1}{n}\binom{n/2-2n^{2/3}}{h}=\Omega(n^{k+1/3})$, and thus there exists  a subgraph $B_h'$ of $B_h$ with minimum degree $\Omega(n^{k+1/3})$. Now we can embed $P_k(a_1,\delta n)$ greedily according to $B_h'$ as degrees are $\Omega(n^{k+1/3})$ and $|P_k(a_1,\delta n)|=O(n^{k-1})$, so at any moment a set corresponding to a vertex of $B_h'$ has at least $n$ more neighbors than already embedded sets. 
\end{proof}

\begin{lem}\label{blowuppembedtwoway}
    Let $T$ be a branching two-way monotone tree poset of height $h(T)$ and suppose that for some $k\ge h(T)$ we have $m_{T,k}=m_{T,k,k}$. Let $x$ be the element of $T$ that is comparable to all elements of $T$ and among such elements is the largest one and let $r=f(x)$. Then for the chain $C_k=\{x_1\prec x_2\prec \dots \prec x_{k}\}$, we have that for any pair $y,y'$ of maximal elements in $C_k(x_r,\delta n)$ there exists a weak copy of $T$ in $C_k(x_r,\delta n)$ that contains $y,y'$.
\end{lem}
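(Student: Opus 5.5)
The plan is to build the weak copy of $T$ explicitly inside $C_k(x_r,\delta n)$, using its tree structure. Write $\rho$ for the root, the unique copy of $x_r$. Above $\rho$ the blow-up is a complete $\lfloor\delta n\rfloor$-ary rooted tree, with levels $r,r+1,\dots,k$ and the maximal elements on level $k$. Below $\rho$ it is a downward complete $\lfloor\delta n\rfloor$-ary tree with levels $r-1,\dots,1$. The order is the tree order.

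A weak copy of $T$ here is an injective map $\phi:T\to C_k(x_r,\delta n)$ with $p\prec q\Rightarrow \phi(p)\prec\phi(q)$. In particular, if $p\prec q$ then the level of $\phi(p)$ is strictly below that of $\phi(q)$. So the level of $\phi$ is a rank function, and I will build $\phi$ together with a rank function $r:T\to[1,k]$. Since $n$ is large, $\lfloor\delta n\rfloor>|T|$. Hence whenever an element has several covers (or co-covers) in $T$, they can be sent into distinct branches, and injectivity is automatic.

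Map $x\mapsto\rho$. Recall $x$ is the largest element comparable to all others and $f(x)=r$. The set $D_T(x)$ consists of the chain of comparable-to-all elements below $x$ together with the downward monotone tree hanging under it, and it has height $r-1$. I embed it greedily into the downward tree below $\rho$: each $t$ goes to level $f(t)$, and distinct lower covers of the same element go to distinct children. This uses only that every vertex below $\rho$ has $\lfloor\delta n\rfloor$ lower covers. The real work is in $U:=U_T[x]$, which is an upward monotone tree with root $x$ and height $h'=h(T)-r+1\le k-r+1$.

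Next I show that $U$ branches at every level $0,1,\dots,h'-2$. Branching at level $0$ holds because $x$ is the \emph{largest} element comparable to all others. Indeed, if $x$ had a unique upper cover, that cover would also be comparable to everything, and if all pairs of leaves met above $x$ there would be such an element. Next I check that the branching hypothesis on $T$ applies to $U$. The assumption $m_{T,k}=m_{T,k,k}$, via Proposition~\ref{maxexponent}(3), should force $U$ to be the part of non-smaller height, or in the tie case lets me use the branching side. Granting this, the definition of branching gives branching at all levels up to $h'-2$. This step, extracting from $m_{T,k}=m_{T,k,k}$ that the upward part is the branching one, is where I expect the main subtlety. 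If the tie case causes trouble, I would dualize and reduce to it.

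Now fix maximal elements $y\neq y'$ (the case $y=y'$ is trivial by embedding any maximal chain of $U$ ending there). Let $w^*$ be their meet in the upper tree, at level $\ell$ with $r\le\ell<k$. Set $i=\min\{\ell-r,h'-2\}$. By the branching property there are leaves $u,u'$ of $U$ whose $H(U)$-path has its lowest vertex $w$ at depth exactly $i$ from $x$. The two $w$–leaf branches have lengths at most $h'-1-i$.

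I place $w$ at $w^*$, with the chain from $x$ to $w$ along the $\rho$–$w^*$ path. This is possible since $i\le\ell-r$ steps are available, and I use any increasing choice of levels. I send the branch towards $u$ up the path from $w^*$ to $y$, and the branch towards $u'$ up the path to $y'$, with $u\mapsto y$ and $u'\mapsto y'$. This requires branch length $\le k-\ell$. If $i=\ell-r$, then $h'-1-i=h(T)-\ell\le k-\ell$ because $h(T)\le k$. If $i=h'-2$, the branches have length $1\le k-\ell$. Intermediate vertices take strictly increasing levels on these paths.

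Finally, I extend to the rest of $U$ greedily. Every not-yet-embedded upper cover of an embedded element is sent to a fresh child in the blow-up tree, and then further upward along any path. The heights suffice because every element of $U$ lies below some leaf at distance at most $h'-1-\mathrm{depth}$, and the available height above its image is at least that. This is the same computation as above, done with a rank function pushed as low as possible off the $y,y'$ paths. The result is a weak copy of $T$ containing $y$ and $y'$.
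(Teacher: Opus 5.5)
Your proposal is correct and follows the paper's proof. The paper likewise sends a branching vertex of $U_T[x]$ at a suitable depth to the meet of $y,y'$ and its two branches up to $y$ and $y'$; its auxiliary root $z'$, discarded afterwards, is just your level-skipping along the $\rho$--$w^*$ path. It settles the step you defer in one line by comparing $m_{T,k,k}=k-f(x)$ with the dual formula for $m_{T,k,1}$, and it dismisses the tie case by duality exactly as you suggest. That duality is legitimate only at the level of Theorem~\ref{middletree}, since dualizing turns the lemma into a statement about minimal elements.

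One small fix: put the intermediate vertices of the two branches on the lowest levels $\ell+1,\ell+2,\dots$ and jump only at the leaves. An arbitrary increasing choice there can leave too little room for the subtrees hanging off them.
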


We will use the following notions in the proof: a \textit{cover of $a$} is an element $a'$ with $a\prec a'$ and no $b\neq a,a'$ with $a\prec b\prec a'$. In an upward monotone tree, a \textit{branch starting at $z$} is a sequence of $z=x_0,x_1,\dots,x_\ell$ such that $x_i$ is a cover of $x_{i-1}$ for all $1\le i\le \ell$.

\begin{proof}
    The assumptions $m_{T,k}=m_{T,k,k}$ imply $h(D_T(x))\leq h(U_T(x))$
    If $h(D_T(x))<h(U_T(x))$, the fact that $T$ is branching implies that $U_T(x)$ is branching. If $h(D_T(x))=h(U_T(x))$, we have $m_{T,k}=m_{T,k,k}=m_{T,k,1}$ and at least one of $D_T(x)$ and $U_T(x)$ is branching. Without loss of generality, we can assume that $U_T(x)$ is branching.  We embed the $D_T(x)$ part of $T$ such that the image of $x$ is $x_r$. Let the largest ranked common predecessor of $y,y'$ be $z$.  We have $r\le r(z)\le k-1$. Let $z'$ be such that $x_r\preceq z'\preceq z$ and $k-r(z')\ge h(T)-r$ and $r(z)-r(z')\le h(T)-1-r$. Such $z'$ exists as  these conditions can be rewritten as $r(z)-h(T)+1+r\le r(z')\le k-h(T)+r$ and $r\le r(z')$. Since $h(T)\le k$, we have $r\le k-h(T)+r$, so if $r(z')$ can be picked to satisfy $r(z)-h(T)+1+r\le r(z')\le k-h(T)+r$, then $r\le r(z')$ can also be satisfied. But $r(z')$ can be picked to satisfy $r(z)-h(T)+1+r\le r(z')\le k-h(T)+r$ as $r(z)\le k-1$ since $z$ is not top ranked. Once the value of $r(z')$ is fixed, one can pick the element $z'$ as every intermediate rank appears on the chain from $x_r$ to $z$.
    
     Now as $U_T(x)$ is branching, there is a copy $U$ of $U_T(x)$ with $x$ played by $z'$ and $z\in U$ such that, by the branching property of $U_T(x)$, $z$ has at least two covers in $U$. These can be chosen such that one branch of $U$ starting at $z$ follows the branch of $C_k(x_r,\delta n)$ starting at $z$ and leading to $y$, while another follows the branch to $y'$. If these do not reach $y$ and $y'$, then we replace the leaves on these branches by $y$ and $y'$. Finally, we remove $z'$ unless $z'=x_r$ so that $x$ is not played by two distinct elements. As if $x_r\neq z'$, then $x_r\prec z'$ and $z'$ is the smallest element in $U$, $x_r\prec u$ for all $u\in U$. So the remaining elements will form a copy of $T$ containing $y,y'$. 
\end{proof}

For a tree poset $T$ of height 2, we define the radius $rad(T)=\min_{u\in T}\max_{v\in T}d_{H(T)}(u,v),$ where $d_{H(T)}$ denotes the graph distance in the undirected  Hasse  diagram of $T$, and call every vertex attaining this minimum a center of $T$.
\begin{lem}\label{blowupembedheight2}
    Let $T$ be a tree poset of height 2 of radius $r$ and center $x$, and let $y,y'\in P_{r+1}(a,\delta n)$ be elements, where $a$ is an endpoint of $H(P_{r+1})$, such that $x$ and $a$ are either both maximal or both minimal elements of their posets. 
    Assume that the shortest path from $y$ to $y'$ contains $a$ and $d(y,y')$ is at most the diameter of $T$. Then there exists a weak copy $T^*$ of $T$ with $P_{r+1}(a,\delta n)\supset T(x,\frac{\delta}{|T|}n)\supset T^*\ni y,y'$.
\end{lem}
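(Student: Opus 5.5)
The plan is a direct, explicit embedding that uses the tree structure of the blow-up. Let $P=P_{r+1}(a,\delta n)$. By definition of the blow-up, $P$ is a rooted tree in which the root $a$ (level $0$) has $\lfloor\delta n\rfloor$ children, every element at level $\rho<r$ has $\lfloor\delta n\rfloor$ children at level $\rho+1$, and the levels alternate between minimal and maximal elements. Since $x$ and $a$ have the same type and $T$ has height $2$, the map "distance from the root" preserves orientation: any element of $T$ at distance $\rho$ from $x$ is maximal or minimal exactly when level $\rho$ of $P$ is. Since $r=rad(T)$, every $t\in T$ has $d(x,t)\le r$.

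The first step is to embed $T$ into $P$ with $x\mapsto a$. I would do this level by level along a BFS of $H(T)$ from $x$. Each $t$ at distance $\rho$ is sent to a child of the image of its parent at level $\rho$ of $P$. Distinct children of the same vertex go to distinct children; this is possible because each vertex has degree less than $|T|\le \delta n/|T|$ for large $n$. The map is injective, preserves the orientation of every Hasse edge, and lands inside a copy of $T(x,\frac{\delta}{|T|}n)$ rooted at $a$. Using up to $\frac{\delta}{|T|}n$ children at each vertex gives that intermediate blow-up, which contains the embedded $T$. So it suffices to produce a copy of $T$, rooted at $a$ in this way, that passes through $y$ and $y'$.

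The second step is to force $y,y'$ into the image. The path from $y$ to $y'$ goes through $a$, so $y$ and $y'$ lie in different branches at $a$ (or one of them equals $a$). Write $d_1=d(a,y)$ and $d_2=d(a,y')$, with $d_1+d_2=d(y,y')\le \mathrm{diam}(T)$. We need two elements $u,u'\in T$ with $d(x,u)=d_1$ and $d(x,u')=d_2$ whose $x$-paths leave $x$ through different neighbours, or with one of them equal to $x$. Then we choose the embedding so that the $x$–$u$ path is sent onto the $a$–$y$ path and the $x$–$u'$ path onto the $a$–$y'$ path. This is consistent because the two branches are distinct, and the remaining vertices are placed greedily as in the first step. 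Orientation is automatic by the parity remark.

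The main obstacle is the existence of $u,u'$, which needs a center-and-diameter fact for trees. If $\mathrm{diam}(T)=2r$, the center $x$ is the midpoint of a longest path. That path then gives two branches at $x$, each containing vertices at every distance $0,\dots,r$. If $\mathrm{diam}(T)=2r-1$, there are two centers, and $x$ has one branch reaching depth $r$ and another reaching depth $r-1$. In both cases every pair $(d_1,d_2)$ with $d_1,d_2\le r$ and $d_1+d_2\le\mathrm{diam}(T)$ can be realized, by putting the larger of $d_1,d_2$ on the deeper branch when $\mathrm{diam}(T)=2r-1$. Here $d_1,d_2\le r$ holds because $P$ has only levels $0,\dots,r$. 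I expect the careful bookkeeping of this case split, including the degenerate case where $y$ or $y'$ equals $a$, to be the only delicate part; the rest is routine.
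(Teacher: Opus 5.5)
Your proposal is correct and follows essentially the same route as the paper. Both root the embedding with $x\mapsto a$, use the fact that a center splits a diameter path into parts of lengths $\lceil D/2\rceil$ and $\lfloor D/2\rfloor$ (your cases $D=2r$ and $D=2r-1$) to place $d(a,y)$ and $d(a,y')$ on different branches at $x$, and extend greedily by splitting children into at most $|T|$ parts; your explicit note that $d(a,y),d(a,y')\le r$ because $P_{r+1}(a,\delta n)$ has depth $r$ fills in the case $s>\lceil D/2\rceil$ that the paper's case split leaves implicit.
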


\begin{proof}
    The existence of $T(x,\frac{\delta}{|T|}n)$ follows from the fact that every element of $T$ can have at most $|T|$ neighbors in $H(T)$ and so by partitioning the neighbors of an element $u$ in $P:=P_{r+1}(a,\delta n)$ into at most $|T|$ parts we can get a copy of $T(x,\frac{\delta}{|T|}n)$. 

    Let $s=d_{H(P)}(y,a)$ and $t=d_{H(P)}(y',a)$,
and assume without loss of generality that $s\ge t$. Since the
shortest path from $y$ to $y'$ contains $a$, we have
\[
s+t=d_{H(P)}(y,y')\le \operatorname{diam}(T).
\]
Let $D=\operatorname{diam}(T)$. A diameter path of $H(T)$ passing
through the center $x$ has two parts, starting at $x$, of lengths
$\left\lceil\frac D2\right\rceil$ and $\left\lfloor\frac D2\right\rfloor$.
We claim that, after possibly interchanging the two parts, they
contain vertices $t_1,t_2$ at distances $s,t$, respectively, from
$x$. Indeed, if $s=\lceil\frac{D}{2}\rceil$, then
$t\le \lfloor \frac{D}{2}\rfloor$.
If $s<\lceil\frac{D}{2}\rceil$, then
$s,t\le =\lfloor\frac{D}{2}\rfloor$.
Thus such $t_1,t_2$ exist, and the path from $t_1$ to $t_2$ passes through $x$.

We map $x,t_1,t_2$ to $a,y,y'$, respectively. We then extend this
map recursively to a weak embedding of $T$ by partitioning, at each
step, the available children in the blow-up into at most $|T|$
parts. Since each vertex of $T$ has degree at most $|T|$, every
required part has size at least $\delta n/|T|$. This yields a weak
copy of $T$ containing $y$ and $y'$ and, more generally, a copy of
$T(x,\frac{\delta}{|T|} n)$ containing the prescribed path.
\end{proof}

\begin{proof}[Proof of Theorem \ref{middletree}]
    To see (1), let $T$ be a two-way monotone branching tree poset and let $\cF\subseteq 2^{[n]}$ be a family of size $(k-1+\varepsilon)\binom{n}{\lfloor n/2\rfloor}$. Without loss of generality we can assume $m_{T,k}=m_{T,k,k}$ otherwise we can consider the dual poset $T^\perp$ and the family of complements $\bar{\cF}=\{[n]\setminus F:F\in \cF\}$. Let $y\in T$ be the largest element among those that are comparable to all elements of $T$. By Theorem \ref{blowupchain}, $\cF$ contains a weak copy $\cT$ of $C_k(x_{f(y)},\delta n)$. According to Lemma \ref{blowuppembedtwoway}, the sets of $\cT$ corresponding to top ranked elements of $C_k(x_{f(y)},\delta n)$ form a clique in $G_T(\cT)\subseteq G_T(\cF)$ and their size is $(\delta n)^{k-f(y)}$. But according to Proposition \ref{maxexponent}, we have $m_{T,k}=m_{T,k,k}=k-f(y)$. This finishes the proof of (1).

    To see (2), observe that for any tree poset $T$ of height 2, there is only one rank function $r:T\rightarrow \{1,2\}$, namely $r(x)=1$ for all minimal elements $x$ and $r(y)=2$ for all maximal elements $y$. Therefore, $m_{T,2,1}=\frac{1}{2}\max\{d_{H(T)}(x,x'):x,x' ~\text{are minimal}\}, m_{T,2,2}=\frac{1}{2}\max\{d_{H(T)}(y,y'):y,y' ~\text{are maximal}\}$.

    Suppose that $m_{T,2,2}\le m_{T,2,1}$, and put
$m=m_{T,2,1}$.
The case $m=0$ is immediate, so assume that $m\ge1$. Choose two
minimal elements $x,x'\in T$ with
$d_{H(T)}(x,x')=2m$,
and let $z$ be the midpoint of the path from $x$ to $x'$.

We first observe that $\operatorname{diam}(T)\le 2m+1$.
Indeed, a diameter whose endpoints are both minimal has length at
most $2m$, while one whose endpoints are both maximal has length at
most $2m_{T,2,2}\le2m$. If the endpoints have different types, then
deleting either endpoint produces a same-type path of length one
less, and hence the diameter is at most $2m+1$. Consequently, $\operatorname{ecc}_{H(T)}(z)\le m+1$ where $\operatorname{ecc}_G(v)=\max\{d_G(u,v):u\in V(G)\}$.

Let $P_{2m+2}$ be a height-two path poset and let $a_0$ be one of
its endpoints, chosen so that $a_0$ and $z$ are either both minimal
or both maximal. By Theorem~\ref{blowupheight2}, $\cF$ contains a
weak copy $\cQ$ of $P_{2m+2}(a_0,\delta n)$.
Let $\cA$ be the family of elements of $\cQ$ corresponding to the
unique vertex of $P_{2m+2}$ at distance $m$ from $a_0$. This vertex
is minimal, and $|\cA|=(\delta n)^m$.

We claim that $\cA$ is a clique in $G_T(\cQ)$. Let $F,F'\in\cA$
be distinct, and let $B$ be their last common ancestor in the
rooted blow-up. Write $s=d_{\cQ}(a_0,B)$ and $q=m-s$.
Then $q\ge1$, the path from $F$ to $F'$ passes through $b$, and
\[
d_{\cQ}(F,B)=d_{\cQ}(F',B)=q.
\]

If $B$ has the same type as $z$, put $w=z$. Otherwise, let $w$ be
a neighbor of $z$ on the path from $x$ to $x'$. In the latter case
$s\ge1$, and hence $q\le m-1$. Thus in both cases the two components
of the path from $x$ to $x'$ in $H(T)$ contain vertices $t,t'$ at
distance $q$ from $w$. Moreover, $t,t'$ are minimal, just as the
elements represented by $F,F'$.

The subtree of $P_{2m+2}(a_0,\delta n)$ rooted at $B$ has depth $2m+1-s=m+1+q$.
If $w=z$, then $\operatorname{ecc}_{H(T)}(w)\le m+1$. Otherwise,
\[
\operatorname{ecc}_{H(T)}(w)
 \le \operatorname{ecc}_{H(T)}(z)+1
 \le m+2
 \le m+1+q.
\]
We may therefore embed $T$ recursively into this rooted subtree,
mapping $w,t,t'$ to $B,F,F'$, respectively. Hence some weak copy of
$T$ in $\cQ$ contains both $F$ and $F'$.

It follows that $\cA$ is a clique in $G_T(\cQ)\subseteq G_T(\cF)$.
Therefore,
$\chi(G_T(\cF))
 \ge |\cA|
 =(\delta n)^m
 =\Omega(n^{m_{T,2}})$,
which proves part~(2).  
\end{proof}
\section*{Acknowledgement}
The research of Binlong Li was supported by the National Natural Science Foundation of China (Grant No. 12671417). 
Changxin Wang was supported by the China
Scholarship Council (No. 202506290206) and the National Natural Science Foundation of China (Grant No. 12471334).

\end{document}